\documentclass{article}
[12pt , a4paper]
\usepackage[T1]{fontenc}
\usepackage[english]{babel}
\usepackage{amsmath}
\usepackage{amssymb,amsthm}
\usepackage{mathrsfs}
\usepackage{marvosym}
\usepackage{amsfonts}
\usepackage{yfonts}
\usepackage{graphicx}
\usepackage{eurosym}
\usepackage{enumerate}
\usepackage{makeidx}
\usepackage{multicol}
\usepackage{stmaryrd}
\usepackage[all]{xy}
\usepackage{fancyhdr}
\usepackage[pdftex]{hyperref}
\usepackage{color}
\DeclareGraphicsExtensions{.jpg,.pdf,.mps,.png}
\definecolor{R}{rgb}{0.5, 0, 0}
\definecolor{B}{rgb}{0, 0, 0.5}
\usepackage[margin=3cm]{geometry}
\usepackage[bottom]{footmisc}

\addtolength{\parskip}{3 mm}
\addtolength{\headsep}{3 mm}
\addtolength{\footskip}{3 mm}

\usepackage[small,hang]{caption}

\addtolength{\parskip}{1.5 mm}

\newtheoremstyle{note}
{4 mm}
{1 mm}
{\itshape}
{}
{\bf\sffamily}
{.}
{.5em}
{}

\theoremstyle{note}
\newtheorem{thm}{Theorem}
\newtheorem{prop}[thm]{Proposition}
\newtheorem{cor}[thm]{Corollary}
\newtheorem{lem}[thm]{Lemma}

\newtheoremstyle{note2}
{4 mm}
{1 mm}
{}
{}
{\bf\sffamily}
{.}
{.5em}
{}

\theoremstyle{note2}
\newtheorem{rem}[thm]{Remark}

\newtheorem{df}[thm]{Definition}

\DeclareMathOperator{\Id}{Id}

\DeclareMathOperator{\Ad}{Ad}
\DeclareMathOperator{\ad}{ad}

\DeclareMathOperator{\Aut}{Aut}

\DeclareMathOperator{\Der}{Der}
\DeclareMathOperator{\Op}{Op}

\DeclareMathOperator{\fa}{\mathfrak{a}}

\DeclareMathOperator{\fg}{\mathfrak{g}}

\DeclareMathOperator{\fk}{\mathfrak{k}}

\DeclareMathOperator{\fm}{\mathfrak{m}}
\DeclareMathOperator{\fn}{\mathfrak{n}}

\DeclareMathOperator{\fp}{\mathfrak{p}}

\DeclareMathOperator{\fs}{\mathfrak{s}}

\DeclareMathOperator{\CE}{CE}
\DeclareMathOperator{\op}{\Op^{\mathbb{S}}\left(\ast_\nu\right)}
\DeclareMathOperator{\rank}{rank}

\title{\textbf{Quantum moment maps and symmetric \\ bounded domains quantizations}} \date{January 2017 \\ \hspace{1 mm} \vspace{-5 mm} \\ \textit{reviewed in} June 2018} \author{{\Large \textsc{St\'ephane Korvers}} \\ \hspace{1 mm} \vspace{-3 mm} \\ \small{Universit\'e du Luxembourg (2015-2017)} \\ \hspace{1 mm} \vspace{-7 mm} \\ \small{FSTC, Unit\'e de Recherche en Math\'ematiques} \\ \hspace{1 mm} \vspace{-7 mm} \\ \small{rue Richard Coudenhove-Kalergi, 6} \\ \hspace{1 mm} \vspace{-7 mm} \\ \small{L-1359 Luxembourg, Grand Duchy of Luxembourg} \\ \hspace{1 mm} \vspace{-4 mm} \\ {\small \textit{E-mail:} korvers.s@gmail.com}}

\begin{document}


\renewcommand{\proofname}{\textbf{Proof.}}
\renewcommand{\qedsymbol}{\Squaresteel}
\renewcommand{\labelitemi}{$\bullet$}

\pagestyle{fancy} 
\fancyhf{} 
\cfoot{\thepage}
\lhead{Quantum moment maps and symmetric bounded domains quantizations}
\rhead{S. Korvers}

\thispagestyle{empty}

\maketitle

\vspace{1 mm}

\begin{center} \small
\textbf{Abstract}

\textit{We introduce an explicit construction for realizing of the space of invariant \\ deformation quantizations on an arbitrary symmetric bounded domain of $\mathbb{C}^n$.}
\end{center} \normalsize

\vspace{1 mm}

\section{Introduction and notations}

Generally speaking, in the context of mathematical physics and quantum mechanics, the terminology of \emph{quantization} is used to allude to the expression at a quantum level of facts related to a classical system. This problem starts with the data of a symplectic manifold $\left(M, \omega\right)$, or more generally a Poisson manifold $\left(M, \left\{-, -\right\}\right)$, modeling the phase space of the classical system. Usually, by quantizing $\left(M, \omega\right)$, one asks for a way to link some classical objects to potential quantum analogs. For example, with the symplectic manifold $\left(M, \omega\right)$ and the algebra of smooth functions on $M$ representing the classical observables, we can respectively associate a Hilbert space $\mathcal{H}$ and an algebra of linear operators on $\mathcal{H}$. Many methods exist to approach this problem. Among them, the \emph{deformation quantization} promotes the idea of an understanding of this quantization problem as a deformation of the commutative structure of the algebra of classical observables $\mathcal{C}^\infty\left(M\right) := \mathcal{C}^\infty\left(M, \mathbb{C}\right)$ into a noncommutative direction given by the Poisson bracket $\left\{-, -\right\}$ associated with the symplectic form $\omega$. 

\noindent At a formal level, this notion is encoded in the data of a \emph{star-product} on $M$ which is an associative $\mathbb{C}\llbracket\nu\rrbracket$-linear product on the space of formal power series in the formal parameter $\nu$ with coefficients in $\mathcal{C}^\infty\left(M\right)$
$$\ast_\nu : \mathcal{C}^\infty\left(M\right)\llbracket\nu\rrbracket \times \mathcal{C}^\infty\left(M\right)\llbracket\nu\rrbracket \rightarrow \mathcal{C}^\infty\left(M\right)\llbracket\nu\rrbracket : \left(f_1, f_2\right) \mapsto f_1 \ast_\nu f_2 := \sum_{k \in \mathbb{N}} \nu^k C_k\left(f_1, f_2\right)$$ where $\left\{C_k : \mathcal{C}^\infty\left(M\right) \times \mathcal{C}^\infty\left(M\right) \rightarrow \mathcal{C}^\infty\left(M\right)\right\}_{k \in \mathbb{N}}$\, are a bi-differential operators such that 
$$C_0\left(f_1, f_2\right) = f_1 f_2, \,\,\,\,C_1\left(f_1, f_2\right) - C_1\left(f_2, f_1\right) = 2 \left\{f_1, f_2\right\} \text{ \, and \, } C_l\left(1, f_1\right) = C_l\left(f_1, 1\right) = 0$$
for each $l \in \mathbb{N}\backslash\left\{0\right\}$ and $f_1, f_2 \in \mathcal{C}^\infty\left(M\right)$. This was introduced by Bayen, Flato, Fronsdal, Lichnerowicz and Sternheimer in 1978; \cite{B+78a}, \cite{B+78b}. This approach has the property to be universal in the sense that there exists a star-product on each Poisson manifold; \cite{Ko03}. 

\noindent Although one does not worry about the convergence of the formal series appearing in the previous definition, under adapted functional hypothesis, it may happen that a new function on $M$ can be defined from the star-product of two functions on $M$. In this case, one talks about \emph{non-formal deformation quantization on $M$}. More specifically, we are interested in such quantization described by an explicit 3-point kernel $K_\nu\left(-, -, -\right)$ through the formula 
$$\left(f \ast_\nu g\right)\left(x\right) \,=\, \int_{M \times M} \,K_\nu\left(x, y, z\right) \,f\left(y\right) \,g\left(z\right) \,dy\,dz$$
when $f$ and $g$ belongs to an adapted space of functions, with $dx$ the Liouville measure on $M$.

\noindent In some situations, it is relevant to look for specific deformation quantizations that take account of symmetries of a classical system through the quantization process. If $G$ is a Lie group which acts by symplectomorphisms on the symplectic manifold $\left(M, \omega\right)$ through the action map \,$$\tau : G \times M \rightarrow M : \left(g, x\right) \mapsto \tau_g\left(x\right),$$ a star-product $\ast_\nu$ on $M$ will be said to be \emph{$G$-invariant} if 
\begin{eqnarray}\label{4}
\tau_g^\star \left(f_1 \ast_\nu f_2\right) = \tau_g^\star f_1 \ast_\nu \tau_g^\star f_2
\end{eqnarray}
for each $g \in G$ and $f_1, f_2 \in \mathcal{C}^\infty\left(M\right)$. When $G$ preserves a symplectic connexion on $M$, then there always exists a $G$-invariant star-product on $M$. It is a consequence of the well known Fedosov construction of star-products on symplectic manifolds; \cite{Fe94}. 

All along this text, we will consider $\mathbb{D} \subset \mathbb{C}^N$ an arbitrary \emph{symmetric bounded domain} of $\mathbb{C}^N$ for $N \in \mathbb{N}\backslash\left\{0\right\}$, ie. an open connected bounded subset of $\mathbb{C}^N$ endowed with a structure of symmetric space for which the symmetries are biholomorphisms. Such domain is connected simply connected; \cite[Ch.\,8, thm.\,4.6]{He01}. When it is endowed with its Bergman metric, it has a structure of an Hermitian symmetric space of non compact type; \cite[Ch.\,8, thm.\,7.1]{He01}. In addition, every Hermitian symmetric space of non compact type can be realized as a symmetric bounded domain; \cite[Ch.\,8, thm.\,7.1]{He01}. As before, we will denote by $\omega$ and $\left\{-, -\right\}$ respectively the symplectic structure on $\mathbb{D}$ and the Poisson bracket on $\mathcal{C}^\infty\left(\mathbb{D}\right)$ associated with $\omega$. 

\noindent Let $G$ be the identity component of the automorphism group of $\mathbb{D}$ and $\fg$ its Lie algebra. It is well known that $G$ is a semi-simple Lie group of transformations of $\mathbb{D}$ which acts holomorphically and transitively on $\mathbb{D}$; \cite[Ch.\,4 \& 8]{He01}, \cite[Ch.\,1, \S\,2]{Ko14}. We will denote by $$\tau : G \times \mathbb{D} \rightarrow \mathbb{D} : \left(g, x\right) \mapsto \tau_g\left(x\right)$$ the action of $G$ on $\mathbb{D}$. For $X \in \fg$, the notation $X^\star \in \Gamma\left(T\mathbb{D}\right)$ will refers to the \emph{fundamental vector field} associated with $X$ which is defined at point $x \in \mathbb{D}$ by $$X^\star_x := \left.\frac{d}{dt}\right|_{t=0} \tau_{\exp\left(- t X\right)}\left(x\right).$$ The action of $G$ on $\mathbb{D}$ is Hamiltonian and admits a unique \emph{(co-)moment map} $$\lambda : \fg \rightarrow \mathcal{C}^\infty\left(\mathbb{D}\right) : X \mapsto \lambda_X$$ defined by the equality $\iota_{X^\star}\omega = - d\lambda_X$ for each $X \in \fg$; \cite[Ch.\,2, thm.\,11.8]{So97}, \cite[Ch.\,26]{Ca08}. In particular, for each $X, Y \in \fg$, we have $$X^\star = \left\{\lambda_X, -\right\} : f \in \mathcal{C}^\infty\left(\mathbb{D}\right) \mapsto \left\{\lambda_X, f\right\} \text{\,\,\, and \,\,\,} X^\star\left(\lambda_Y\right) = \lambda_{\left[X, Y\right]}.$$ 

In the present work, we develop a method unifying constructions of $G$-invariant star-products on $\mathbb{D}$. We present a characterization of the space of all these invariant star-products as solutions to an explicit hierarchy of partial differential equations and we explicit how to write these equations. 

\noindent The method used in this work combines modern mathematics of various research fields in an innovative way, and is based on the \emph{retract method} initiated by Bieliavsky and his collaborators in the 2000s. It have already proven its power in the obtention of similar descriptions for the particular cases of the Poincar\'e disk and the unit ball of $\mathbb{C}^N$; \cite{B+09}, \cite{Ko14}. In the following sections, we show that a similar approach can be performed under hypothesis that we describe. We also develop tools for simplifying computations underlined by practical applications of this method.

\vspace{3 mm}

\noindent \underline{\texttt{Acknowledgement}} \small

\vspace{-2 mm}

\noindent \texttt{This work is supported by the \emph{Fonds National de la Recherche FNR/AFR-Postdoc grant} no.8960322.}

\vspace{-3 mm}

\noindent \texttt{The author thanks the Fonds National de la Recherche, the University of Luxembourg, and Martin \\ Schlichenmaier for supporting him and giving him the opportunity to pursue his research in a \\stimulating international working environment.}

\vspace{-3 mm}

\noindent \texttt{The author thanks Pierre Bieliavsky for introducing him to this field, and for inspiring this \\ quantization method through our collaboration in \cite{Ko14}.} \normalsize

\vspace{10 mm}

\section{Structure of the automorphism group of $\mathbb{D}$}

In this section, we describe the structure of the automorphism group of $\mathbb{D}$ and its Lie algebra $\fg$. In particular, we explicit the restricted root space decomposition and the Pyatetskii-Shapiro decomposition of $\fg$. We show that the domain $\mathbb{D}$ can be identify with the Iwasawa group of $G$.

\subsection{Root space decomposition}

Let's fix $o \in \mathbb{D}$. Then, the subgroup $K := \left\{g \in G \,|\, \tau_g\left(o\right) = o\right\} \subset G$ is compact and the map $$G/K \rightarrow \mathbb{D} : g K \mapsto \tau_g\left(o\right)$$ is a diffeomorphism; \cite[Ch.\,4, thm.\,3.3]{He01}. As the domain $\mathbb{D}$ has a structure of Hermitian symmetric space of non compact type, the Lie algebra $\fg$ admits a Cartan decomposition $$\fg = \fk \oplus \fp$$ where $\fk$ is the Lie algebra of $K$ and $\fp$ is invariant under the adjoint action of $K$; \cite[Ch.\,8, \S\,4]{He01}. Let's denote by $$\sigma = \Id_{\fk} \oplus -\Id_{\fp} : \fg \rightarrow \fg$$ the associated Cartan involution and $\beta$ the Killing form of $\fg$. Then, the symmetric bilinear form $$\beta_\sigma : \left(X, Y\right) \in \fg \times \fg \mapsto - \beta\left(X, \sigma\left(Y\right)\right)$$ is positive definite and $\beta\left(X, Y\right) = 0$ for each $X \in \fk$ and $Y \in \fp$. Let's consider $\fa$ an abelian Lie subalgebra of $\fg$ contained in $\fp$ and maximal for this property. We set $$r := \dim\left(\fa\right)$$ to be the \emph{rank} of $\mathbb{D}$. This number is independent from the choice of $\fa$; \cite[Ch.\,6, thm.\,6.51]{Kn02}. For each linear form $\left[\lambda : \mathfrak{a} \rightarrow \mathbb{R}\right] \in \mathfrak{a}^\star$, we can define $$\mathfrak{g}_\lambda := \left\{X \in \mathfrak{g} \,:\, \left[H, X\right] = \lambda\left(H\right) X \text{ for each } H \in \mathfrak{a}\right\} \subset \mathfrak{g}.$$ 

\begin{df}
A linear form $\lambda \in \mathfrak{a}^\star \backslash \left\{0\right\}$ such that $\mathfrak{g}_\lambda$ is non trivial will be called \emph{(restricted) root of $\fg$}. The set of all these roots will be denoted by $\Sigma \subset \mathfrak{a}^\star$. For $\lambda \in \Sigma$, the subspace $\fg_\lambda$ is called \emph{(restricted) root space} of $\fg$.
\end{df}

\begin{prop}\label{rsd} \emph{\cite[Ch.\,6, prop.\,6.40]{Kn02}} 
The Lie algebra $\fg$ admits a root space decomposition $$\mathfrak{g} \,=\, \mathfrak{g}_0 \,\oplus\, \left(\bigoplus_{\lambda \in \Sigma} \mathfrak{g}_\lambda\right).$$
For each $\lambda, \mu \in \mathfrak{a}^\star$, we have $\left[\mathfrak{g}_\lambda, \mathfrak{g}_\mu\right] \subset \mathfrak{g}_{\lambda + \mu}$ and $\mathfrak{g}_{-\lambda} = \sigma\left(\mathfrak{g}_\lambda\right)$. In addition, the subspace $\fg_0$ is a Lie subalgebra of $\fg$ which admits a decomposition $$\fg_0 = \fa \oplus \fm \text{ \,\,with\,\,\, } \mathfrak{m} := \left\{X \in \fk : \left[H, X\right] = 0 \text{ for each } H \in \mathfrak{a}\right\}.$$
\end{prop}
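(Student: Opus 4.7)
The plan is to use the positive definite inner product $\beta_\sigma$ to diagonalize the adjoint action of $\mathfrak{a}$ on $\mathfrak{g}$, and then read off the decomposition from spectral theory.

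First I would observe that for $H \in \mathfrak{a} \subset \mathfrak{p}$, the operator $\ad(H) : \fg \to \fg$ is symmetric with respect to $\beta_\sigma$. Indeed, using the invariance of the Killing form and the relations $\sigma[\fk,\fk] \subset \fk$, $\sigma[\fk,\fp] \subset \fp$, $\sigma[\fp,\fp] \subset \fk$, one checks that $\sigma \circ \ad(H) = -\ad(H) \circ \sigma$, and combining this with $\beta$-skew-symmetry of $\ad(H)$ yields $\beta_\sigma(\ad(H)X, Y) = \beta_\sigma(X, \ad(H)Y)$. Since $\fa$ is abelian, $\{\ad(H) : H \in \fa\}$ is a commuting family of $\beta_\sigma$-self-adjoint endomorphisms of the finite-dimensional real vector space $\fg$, hence simultaneously diagonalizable over $\mathbb{R}$. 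The joint eigenspace decomposition is exactly
\[
\fg = \fg_0 \oplus \bigoplus_{\lambda \in \Sigma} \fg_\lambda,
\]
where $\Sigma$ denotes the set of nonzero joint eigencharacters with nontrivial eigenspace. This proves the first assertion.

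Next I would verify the two bracket relations. The inclusion $[\fg_\lambda, \fg_\mu] \subset \fg_{\lambda+\mu}$ is a direct Jacobi identity computation: for $X \in \fg_\lambda$, $Y \in \fg_\mu$, $H \in \fa$,
\[
[H, [X, Y]] = [[H, X], Y] + [X, [H, Y]] = (\lambda(H) + \mu(H))[X, Y].
\]
The conjugation relation $\fg_{-\lambda} = \sigma(\fg_\lambda)$ follows from the identity $\sigma \circ \ad(H) = -\ad(H) \circ \sigma$ noted above, applied to $X \in \fg_\lambda$: one obtains $\ad(H) \sigma(X) = -\lambda(H) \sigma(X)$.

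Finally I would address the description of $\fg_0$. By construction $\fg_0$ is the centralizer $\mathfrak{z}_{\fg}(\fa)$ of $\fa$ in $\fg$. Since $\sigma$ preserves $\fa$ (pointwise reversing sign) and commutes up to sign with every $\ad(H)$, it preserves $\fg_0$. Hence $\fg_0$ splits under the Cartan decomposition:
\[
\fg_0 = (\fg_0 \cap \fk) \oplus (\fg_0 \cap \fp) = \fm \oplus (\fg_0 \cap \fp),
\]
where the first summand matches the definition of $\fm$. The main subtle point — the one I would treat carefully — is the equality $\fg_0 \cap \fp = \fa$. The inclusion $\fa \subset \fg_0 \cap \fp$ holds since $\fa$ is abelian and contained in $\fp$. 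For the reverse inclusion, pick $X \in \fp$ commuting with every element of $\fa$; then $\fa + \mathbb{R} X \subset \fp$ is still abelian, so by the maximality assumption on $\fa$ we must have $X \in \fa$. This yields $\fg_0 = \fa \oplus \fm$ and completes the proof. The only delicate step is really the self-adjointness of $\ad(H)$ for $H \in \fp$ with respect to $\beta_\sigma$; once that is established, the rest is spectral theory and standard Jacobi manipulations.
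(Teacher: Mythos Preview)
Your argument is correct and is essentially the standard proof of the restricted root space decomposition. Note, however, that the paper does not supply its own proof of this proposition: it is quoted verbatim as \cite[Ch.\,6, prop.\,6.40]{Kn02} and used as a black box. What you have written is precisely the argument one finds in Knapp --- self-adjointness of $\ad(H)$ with respect to $\beta_\sigma$, simultaneous diagonalization of the commuting family $\{\ad(H):H\in\fa\}$, the Jacobi identity for $[\fg_\lambda,\fg_\mu]\subset\fg_{\lambda+\mu}$, the relation $\sigma\circ\ad(H)=-\ad(H)\circ\sigma$ for $\sigma(\fg_\lambda)=\fg_{-\lambda}$, and maximality of $\fa$ for $\fg_0\cap\fp=\fa$ --- so there is nothing to compare beyond confirming that your proof matches the cited source.
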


\noindent The root space decomposition of $\fg$ is an orthogonal direct sum for the inner product $\beta_\sigma$ given that $\beta\left(X, Y\right) = 0$ for each $X \in \fg_\lambda$ and $Y \in \fg_\mu$ if $\lambda, \mu \in \mathfrak{a}^\star$ satisfy $\lambda + \mu \neq 0$.

The Lie algebra $\fm$ admits the decomposition 
\begin{eqnarray}\label{1c}
\fm = \left[\fm, \fm\right] \oplus Z\left(\fm\right)
\end{eqnarray}
where $Z\left(\fm\right)$ denotes the center of $\fm$; \cite[Ch.\,1, cor.\,1.56 \& Ch.\,7, prop.\,7.48]{Kn02}. As $\beta$ is positive definite on $\fa \times \fa$, for $\lambda \in \fa^\star$, we can define $H_\lambda \in \fa$ as the unique element in $\fa$ such that $\beta\left(H_\lambda, H\right) = \lambda\left(H\right)$ for each $H \in \fa$. The set $\left\{H_\lambda \,:\, \lambda \in \Sigma\right\}$ spans $\fa$; \cite[Ch.\,6, cor.\,6.53]{Kn02}. For all $\lambda \in \Sigma$ and $X \in \fg_\lambda$, we have 
\begin{eqnarray}\label{1}
\left[X, \sigma\left(X\right)\right] = \beta\left(X, \sigma\left(X\right)\right) H_\lambda \,;
\end{eqnarray}
\cite[Ch.\,6, prop.\,6.52]{Kn02}. Let's notice that $\beta\left(X, \sigma\left(X\right)\right) < 0$ if $X \neq 0$ in the previous equality, as $\beta_\sigma$ is positive definite. 

\noindent We conduce this section with the following technical lemma.

\begin{lem}\label{m} \emph{\cite[thm.\,1]{Ko16}} 
Let's consider $\lambda \in \Sigma$ and $X \in \fg_\lambda\backslash\left\{0\right\}$. Then, we have $$\left[\fm, X\right] = X^{\perp\left(\lambda\right)} := \left\{Y \in \fg_\lambda : \beta_\sigma\left(X, Y\right) = 0\right\}.$$ In particular, the root space $\fg_\lambda$ admits the decomposition $\fg_\lambda = \mathbb{R} X \oplus \left[\fm, X\right]$.
\end{lem}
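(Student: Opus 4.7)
The plan is to establish $[\fm, X] = X^{\perp\left(\lambda\right)}$ by two inclusions, after which the direct-sum decomposition $\fg_\lambda = \mathbb{R} X \oplus [\fm, X]$ follows automatically: $X^{\perp\left(\lambda\right)}$ is by definition the $\beta_\sigma$-orthogonal complement of $\mathbb{R} X$ inside $\fg_\lambda$, and $\beta_\sigma\left(X, X\right) > 0$ precludes $X \in X^{\perp\left(\lambda\right)}$.

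For the inclusion $[\fm, X] \subseteq X^{\perp\left(\lambda\right)}$, I would unfold
$$
\beta_\sigma\left(X, [M, X]\right) \;=\; -\beta\left(X, \sigma[M, X]\right) \;=\; -\beta\left(X, [M, \sigma X]\right),
$$
using that the Cartan involution $\sigma$ is a Lie-algebra automorphism fixing every $M \in \fm \subseteq \fk$ pointwise. Applying $\ad$-invariance of $\beta$ rewrites the quantity as a multiple of $\beta\left(M, [X, \sigma X]\right)$; formula~(\ref{1}) locates $[X, \sigma X] \in \mathbb{R} H_\lambda \subseteq \fa \subseteq \fp$, and the orthogonality $\beta\left(\fk, \fp\right) = 0$ forces the pairing to vanish.

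For the reverse inclusion I would proceed constructively. For each $Z \in X^{\perp\left(\lambda\right)}$, set $W_Z := [\sigma X, Z] \in \fg_0 = \fa \oplus \fm$. Pairing $W_Z$ with an arbitrary $H \in \fa$ against $\beta$ and using $\ad$-invariance with $[H, \sigma X] = -\lambda\left(H\right) \sigma X$ reduces the $\fa$-component to a multiple of $\beta\left(\sigma X, Z\right) = -\beta_\sigma\left(X, Z\right) = 0$ (exploiting $\sigma$-invariance of $\beta$); non-degeneracy of $\beta$ on $\fa$ then forces $W_Z \in \fm$. A Jacobi-identity computation combined with~(\ref{1}) yields the key identity
$$
[W_Z, X] \;=\; c\, Z \;-\; L\left(Z\right), \quad L\left(Z\right) := \left[\sigma X, [X, Z]\right], \quad c := -\beta\left(X, \sigma X\right)\lambda\left(H_\lambda\right) > 0,
$$
so that $Z \mapsto [W_Z, X]$ realizes the operator $cI - L$ on $X^{\perp\left(\lambda\right)}$ (noting $L$ preserves this subspace as an immediate consequence of the first inclusion applied to $[W_Z, X]$).

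The main step, and the one that initially looks like an obstacle, is invertibility of $cI - L$. I would resolve it uniformly, bypassing any case split between the tube and non-tube ($BC_r$) settings, by noting that $\sigma \ad\left(X\right) \sigma = \ad\left(\sigma X\right)$ together with $\ad$-invariance of $\beta$ gives the $\beta_\sigma$-adjoint relation $\ad\left(X\right)^\ast = -\ad\left(\sigma X\right)$. Hence $L = -\ad\left(X\right)^\ast \ad\left(X\right)$ is $\beta_\sigma$-self-adjoint and negative semi-definite, with $\beta_\sigma\left(Z, L\left(Z\right)\right) = -\beta_\sigma\left(\ad\left(X\right) Z, \ad\left(X\right) Z\right) \leq 0$. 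Since $c > 0$, the operator $cI - L$ is then strictly positive-definite on $X^{\perp\left(\lambda\right)}$, hence invertible. Setting $M := W_{\left(cI - L\right)^{-1}Y} \in \fm$ produces the desired preimage $[M, X] = Y$ for every $Y \in X^{\perp\left(\lambda\right)}$, closing the argument.
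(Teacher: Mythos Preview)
The paper does not supply its own proof of this lemma: it is stated with a bare citation to \cite[thm.\,1]{Ko16} and nothing more. There is therefore no in-paper argument to compare against. That said, your proof is correct and self-contained.

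A few remarks on the details, all of which check out. For the inclusion $[\fm,X]\subseteq X^{\perp(\lambda)}$, the chain $\beta_\sigma(X,[M,X])=-\beta(X,[M,\sigma X])=\beta(M,[X,\sigma X])$ indeed follows from $\sigma(M)=M$ and $\ad$-invariance, and (\ref{1}) together with $\beta(\fk,\fp)=0$ finishes it. For the reverse inclusion, your verification that $W_Z=[\sigma X,Z]$ has no $\fa$-component uses exactly the right ingredients: $\ad$-invariance, $[H,\sigma X]=-\lambda(H)\sigma X$, and $\sigma$-invariance of $\beta$ to convert $\beta(\sigma X,Z)$ into $-\beta_\sigma(X,Z)=0$. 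The Jacobi computation giving $[W_Z,X]=cZ-L(Z)$ with $c=-\beta(X,\sigma X)\lambda(H_\lambda)$ is correct, and $c>0$ since $\beta(X,\sigma X)<0$ and $\lambda(H_\lambda)=\beta(H_\lambda,H_\lambda)>0$. The key invertibility step is clean: the adjoint identity $\ad(X)^\ast=-\ad(\sigma X)$ with respect to $\beta_\sigma$ follows from $\sigma\circ\ad(X)\circ\sigma=\ad(\sigma X)$ and $\ad$-invariance of $\beta$, so $L=-\ad(X)^\ast\ad(X)$ is self-adjoint and negative semi-definite, making $cI-L$ strictly positive definite on $X^{\perp(\lambda)}$.

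Your approach is slightly more conceptual than what one typically sees in the literature on restricted root systems, where the result is often extracted from the explicit $\mathfrak{sl}_2$-representation theory attached to each root (via the triple $\{H_\lambda,X,\sigma X\}$ and the decomposition of $\fg$ into strings). Your positivity argument for $cI-L$ sidesteps any such case analysis and works uniformly.
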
 

\noindent As a consequence of this lemma, if $\lambda \in \Sigma$ is such that $\dim\left(\fg_\lambda\right) = 1$, then $\left[\fm, X\right] = 0$. In addition, if $\fm = 0$, all the root spaces of $\fg$ are one-dimensional.

\subsection{Iwasawa decomposition}

Let's fix $\left\{\varphi_1, ..., \varphi_r\right\}$ a basis of $\mathfrak{a}^\star$. We will say that the root $\lambda \in \Sigma$ is \emph{positive} if there exists $1 \leq k_0 \leq r$ such that $\varphi_{k_0}\left(H_\lambda\right) > 0$ and $\varphi_{k}\left(H_\lambda\right) = 0$ for each $k < k_0$. We will denote by $\Sigma^+$ the set of positive roots of $\fg$. Now, we can introduce the \emph{Iwasawa decomposition} of $\fg$ and $G$.

\begin{prop}\label{iwa2} \emph{\cite[Ch.\,6, prop.\,6.43 \& thm.\,6.46]{Kn02}} The Lie algebra $\mathfrak{g}$ admits the following vector space decomposition
$$\mathfrak{g} = \mathfrak{a} \oplus \mathfrak{n} \oplus \mathfrak{k} \text{ \,\,\,with\, } \fn := \bigoplus_{\lambda \in \Sigma^+} \mathfrak{g}_\lambda.$$
The connected Lie subgroup $A \subset G$ (resp.\,$N \subset G$) which Lie algebra $\mathfrak{a}$ (resp.\,$\mathfrak{n}$) is abelian (resp.\,nilpotent) an simply connected. The group $$\mathbb{S} := AN$$ is a connected simply connected solvable Lie subgroup of $G$ called Iwasawa group of $G$. In addition, the maps
$$A \times N \rightarrow \mathbb{S} : \left(a, n\right) \mapsto an \,\,\,\,\, \text{ and } \,\,\,\,\, \mathbb{S} \times K \rightarrow G : \left(s, k\right) \mapsto sk$$
are global diffeomorphisms between smooth manifolds.
\end{prop}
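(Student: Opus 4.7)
The plan is to prove the four claims of Proposition~\ref{iwa2} in sequence: the vector space decomposition of $\fg$, the structural properties of $A$ and $N$, the construction of $\mathbb{S} = AN$ with the diffeomorphism $A \times N \to \mathbb{S}$, and finally the global diffeomorphism $\mathbb{S} \times K \to G$. The first three are essentially algebraic, while the last is genuinely geometric.

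First I would establish $\fg = \fa \oplus \fn \oplus \fk$ from the root space decomposition of Proposition~\ref{rsd}. Using $\fg_{-\lambda} = \sigma(\fg_\lambda)$, any $\sigma(X) \in \fg_{-\lambda}$ with $\lambda \in \Sigma^+$ and $X \in \fg_\lambda$ can be rewritten as $\sigma(X) = (X + \sigma(X)) - X$, where $X + \sigma(X)$ lies in the $(+1)$-eigenspace $\fk$ of $\sigma$ and $X \in \fn$. Combined with $\fm \subset \fk$ and $\fg_0 = \fa \oplus \fm$, this yields $\fg = \fa + \fn + \fk$. Directness follows from a dimension count: the map $X \in \fn \mapsto X + \sigma(X) \in \fk$ is injective (an element of its kernel would lie in $\fg_\lambda \cap \fg_{-\lambda} = \{0\}$), giving $\dim\fk = \dim\fm + \dim\fn$, and hence $\dim\fa + \dim\fn + \dim\fk = \dim\fa + \dim\fm + 2\dim\fn = \dim\fg$.

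Next I would handle the subgroup properties. $\fa$ is abelian by hypothesis; $\fn$ is nilpotent because $\Sigma^+$ is finite and $[\fg_\lambda, \fg_\mu] \subset \fg_{\lambda + \mu}$ forces iterated brackets in $\fn$ to eventually vanish. The integral subgroups $A$ and $N$ are thus connected, abelian (resp.\,nilpotent), and simply connected: for $A$, this follows from $\fa \subset \fp$ and the classical polar decomposition $G = K \cdot \exp(\fp)$, which restricts to give $A \cong \fa$ via $\exp$; for $N$, it is the standard fact that $\exp : \fn \to N$ is a diffeomorphism when $\fn$ is nilpotent. Since $[\fa, \fn] \subset \fn$, the subspace $\fs := \fa \oplus \fn$ is a Lie subalgebra isomorphic to the semidirect sum $\fa \ltimes \fn$, integrating to $\mathbb{S} = AN \cong A \ltimes N$. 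In particular $(a,n) \mapsto an$ is a global diffeomorphism $A \times N \to \mathbb{S}$, and $\mathbb{S}$ is connected, simply connected, and solvable (since $[\fs,\fs] \subset \fn$ is nilpotent).

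Finally I would show that $\phi : \mathbb{S} \times K \to G$, $(s,k) \mapsto sk$, is a global diffeomorphism. The differential at $(e,e)$ is the addition $\fs \oplus \fk \to \fg$, which is a linear isomorphism by the first step; by equivariance under left translations by $\mathbb{S}$ and right translations by $K$, $d\phi$ is an isomorphism everywhere, so $\phi$ is a local diffeomorphism. Its image is a nonempty open subgroup of the connected $G$, hence equal to $G$. Injectivity reduces to $\mathbb{S} \cap K = \{e\}$: consider the orbit map $\mathbb{S} \to G/K \cong \mathbb{D}$, $s \mapsto \tau_s(o)$, a submersion whose image is an open orbit of $\mathbb{S}$. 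Invoking the global $KAK$ decomposition of $G$ shows this orbit is all of $\mathbb{D}$, so $\mathbb{S}/(\mathbb{S} \cap K) \cong \mathbb{D}$. Since $\mathbb{S} \cong \mathbb{R}^{\dim\fa + \dim\fn}$ as a manifold and $\mathbb{D}$ is contractible, the compact Lie group $\mathbb{S} \cap K$ must itself be contractible, hence trivial. The main obstacle is precisely this last surjectivity statement for the orbit map: it is where the genuine geometric content of the Iwasawa decomposition enters, and where purely Lie-algebraic arguments have to be supplemented by the global polar decomposition of $G$.
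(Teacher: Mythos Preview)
The paper does not prove this proposition at all; it is quoted as a standard result from Knapp \cite[Ch.\,6, prop.\,6.43 \& thm.\,6.46]{Kn02}, so there is no in-paper argument to compare yours against.

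Your outline is broadly the standard one, and the Lie-algebraic parts are fine, but the surjectivity argument for $\phi : \mathbb{S} \times K \to G$ has a real gap. You write that the image $\mathbb{S} K$ is ``a nonempty open subgroup of the connected $G$, hence equal to $G$''. Openness does follow from your local-diffeomorphism computation, but $\mathbb{S} K$ is \emph{not} known to be a subgroup at this stage: from $(s_1 k_1)(s_2 k_2) = s_1\,(k_1 s_2 k_1^{-1})\,k_1 k_2$ you would need $k_1 s_2 k_1^{-1} \in \mathbb{S} K$, which is exactly an instance of the Iwasawa decomposition you are trying to establish, so the argument is circular. Your fallback via the $KAK$ decomposition does not close the gap either: $G = KAK$ yields $\mathbb{D} = K A \cdot o$, not $\mathbb{S} \cdot o = A N \cdot o = \mathbb{D}$, and $K$ has no reason to preserve the $\mathbb{S}$-orbit of $o$ since $K$ does not normalise $\mathbb{S}$. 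The usual repair (and this is essentially what Knapp does) is to argue closedness rather than the subgroup property: one first shows that $N$ is closed in $G$ (using that each $\Ad(n)$ is unipotent on $\fg$), then that $AN$ is closed, and finally that $\mathbb{S} K$, being the image under a proper map of $\mathbb{S} \times K$ (compactness of $K$), is closed as well as open, hence all of $G$ by connectedness. With surjectivity secured, your contractibility argument for $\mathbb{S} \cap K = \{e\}$ then goes through.
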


\noindent As a consequence, we get a diffeomorphism $\mathbb{S} \rightarrow G/K : s \mapsto s K$. In particular, the action of the Iwasawa group $\mathbb{S}$ on the symmetric bounded domain $\mathbb{D}$ is simply transitive and we have the identification $\mathbb{S} \simeq \mathbb{D}$. Let's extend the notation $\tau$ to denote the $G$-equivariant transport of this action on $\mathbb{S} \simeq G/K$. It is easy to notice that $$\tau_s\left(s^\prime\right) = s s^\prime =: L_s\left(s^\prime\right)$$ for each $s, s^\prime \in \mathbb{S}$. In particular, through its identification with $\mathbb{D}$, the group $\mathbb{S}$ becomes a left-invariant K\"ahlerian Lie group. 

In this text, we will denote by $\fs$ the Lie algebra of $\mathbb{S}$. We have the following vector space isomorphisms:\, $\fs \,\simeq\, \fa \oplus \fn \,\simeq\, \fp \,\simeq\, T_o\left(\mathbb{D}\right).$
We can notice the identities $$\left[\mathfrak{s}, \mathfrak{s}\right] = \mathfrak{n} \text{ \,\,\,and\,\,\, } N\left(\fn\right) = \fs \oplus \fm$$ where $N\left(\fn\right)$ is the normalizer of $\fn$ in $\fg$. The first equality and the inclusion $\left[\fg_0 \oplus \fn, \fn\right] \subset \fn$ are direct from the properties of root space decomposition of $\mathfrak{g}$. As a consequence, the second equality follows from (\ref{1}).

\noindent The Iwasawa decompositions of $\mathfrak{g}$ and $G$ can be written $$\mathfrak{g} = \mathfrak{s} \oplus \mathfrak{k} \text{ \,\,\,and\,\,\, } G = \mathbb{S} K \simeq \mathbb{S} \times K$$ respectively. We will denote the associated decompositions of $X \in \fg$ and $g \in G$ respectively by $$X = \left[X\right]_\mathfrak{s} + \left[X\right]_\mathfrak{k} \text{ \,\,\,and\,\,\, } g = \left[\,g\,\right]_\mathbb{S} \left[\,g\,\right]_K$$ with $\left[X\right]_\mathfrak{s} \in \mathfrak{s}$, $\left[X\right]_\mathfrak{k} \in \mathfrak{k}$, $\left[\,g\,\right]_\mathbb{S} \in \mathbb{S}$ and $\left[\,g\,\right]_K \in K$. With these notations, we can remark that 
\begin{eqnarray}\label{2}
\tau_g\left(s\right) = \left[\,gs\,\right]_\mathbb{S} \text{\,\,\, and \,\,\,} \left[X\right]_\mathfrak{s} = \left.\frac{d}{dt}\right|_{t=0} \left[\exp\left(t X\right)\right]_\mathbb{S}
\end{eqnarray}
for each $s \in \mathbb{S}$, $g \in G$ and $X \in \fg$.

\begin{rem}\label{mm}
The Lie algebra $\fs$ is endowed with a scalar product $\left(- | -\right)$ induced by the K\"ahlerian structure of \,$\mathbb{S} \simeq \mathbb{D}$. Up to a constant $C_{\mathbb{D}} \in \mathbb{R}$, we have $$\left(\left[X\right]_\mathfrak{s}|\left[Y\right]_\mathfrak{s}\right) = C_{\mathbb{D}} \,\beta_\sigma\left(X, Y\right) = C_{\mathbb{D}} \,\beta\left(X, Y\right)$$ for each $X, Y \in \fp \simeq \fs$; \cite[Ch.\,1, rem.\,1.5.9]{Ko14}. 
In addition, lemma \ref{m}, the $\ad$-invariance of the Killing form $\beta$, and the equality $\left[Y, \left[X\right]_\mathfrak{s}\right] = \left[\left[Y, X\right]\right]_\mathfrak{s}$ for each $X \in \fp$ and $Y \in \fm \subset \fk$, allow us to show that $$X^{\perp\left(\lambda\right)} = \left[\fm, X\right] = \left\{Y \in \fg_\lambda : \left(X\,|\,Y\right) = 0\right\}$$ for all $\lambda \in \Sigma$ and $X \in \fg_\lambda\backslash\left\{0\right\}$.
\end{rem}

\subsection{Pyatetskii-Shapiro decomposition}

The following proposition explicits the so-called \emph{Pyatetskii-Shapiro decomposition} of the Lie group $\mathbb{S} \simeq \mathbb{D}$ into elementary bricks. It is obtained by combining results from the reference \cite[Ch.\,2, \S\,3]{Py69} as well as \cite[Ch.\,1, thm.\,1.125]{Kn02} and \cite[Ch.\,1, lem.\,1.3.10, lem.\,1.4.12 \& prop.\,1.5.10]{Ko14}.

\begin{prop}\label{cool} There exists $n_1, ..., n_r \in \mathbb{N}\backslash\left\{0\right\}$ such that the Lie group $\mathbb{S}$ admits the decomposition $$\mathbb{S} = \left(...\left(\mathbb{S}_r \ltimes \mathbb{S}_{r-1}\right) \ltimes ... \ltimes \mathbb{S}_2\right) \ltimes \mathbb{S}_1$$
where $\mathbb{S}_j$ is a Lie subgroup of $\mathbb{S}$ which is isomorphic to the Iwasawa group of $G_j := SU\left(1, n_j\right)$ for each $1 \leq j \leq r$. In addition, the group $\mathbb{S}_j$ acts simply transitively on the complex unit ball of \,$\mathbb{C}^{n_j}$ and this space admits a structure of symmetric bounded domain with automorphism group $G_j$.
\end{prop}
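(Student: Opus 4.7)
The plan is to build the decomposition by iterating a rank-reduction procedure on the restricted root structure of $\fg$. The key structural input is Harish-Chandra's construction of a maximal set $\{\gamma_1, \ldots, \gamma_r\} \subset \Sigma^+$ of strongly orthogonal roots, together with the classical fact that the restricted root system of a Hermitian symmetric space of non-compact type is of type $C_r$ or $BC_r$: every positive root has the form $\gamma_j$, $\tfrac12\gamma_j$, or $\tfrac12(\gamma_i \pm \gamma_j)$ with $i < j$; see \cite[Ch.\,8]{He01}.

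First, I would order the $\gamma_j$'s from lowest to highest with respect to the positivity ordering fixed before Proposition \ref{iwa2} and introduce, for each $1 \le j \le r$, a subspace $\fs_j \subset \fs$ gathering $\mathbb{R}\,H_{\gamma_j}$ with the root spaces $\fg_{\gamma_j}$, $\fg_{\frac12\gamma_j}$ (if the root occurs), and with those $\fg_{\frac12(\gamma_j \pm \gamma_i)}$ for $i < j$ that are absorbed at stage $j$. The precise allocation is the one dictated by Pyatetskii-Shapiro, as recorded in \cite[Ch.\,2, \S\,3]{Py69} and in \cite[Ch.\,1, lem.\,1.3.10]{Ko14}. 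The integer $n_j$ then reads off as $1 + \tfrac12\dim\fg_{\frac12\gamma_j}$ when summed against the multiplicities of the off-diagonal spaces allocated to stage $j$.

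Second, I would use the bracket rule $[\fg_\lambda, \fg_\mu] \subset \fg_{\lambda+\mu}$ of Proposition \ref{rsd}, together with the strong orthogonality of the $\gamma_j$'s (which kills most brackets across distinct levels because the resulting linear form is not a root), to prove that each cumulative sum $\fs_1 \oplus \cdots \oplus \fs_k$ is a Lie subalgebra of $\fs$ and that $\fs_1 \oplus \cdots \oplus \fs_{k-1}$ sits in it as an ideal. This provides at the infinitesimal level the iterated semi-direct structure, which exponentiates to the group level thanks to Proposition \ref{iwa2}, since $\mathbb{S}$ is connected, simply connected and solvable.

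Third, I would identify $\mathbb{S}_j$ with the Iwasawa group of $SU(1, n_j)$ by comparing the structure constants of $\fs_j$ with those of the Iwasawa algebra of $\mathfrak{su}(1, n_j)$, which is a one-dimensional extension of a $(2n_j - 1)$-dimensional Heisenberg algebra; simple transitivity on the unit ball in $\mathbb{C}^{n_j}$ then follows from the Iwasawa decomposition applied to $G_j$, as in \cite[Ch.\,1, prop.\,1.5.10]{Ko14}. The main obstacle of the argument is the combinatorial bookkeeping in the second step: one must allocate each $\fg_{\frac12(\gamma_i \pm \gamma_j)}$ to exactly one $\fs_j$ so that both the ideal chain condition and the Heisenberg shape of the individual $\fs_j$'s are respected simultaneously. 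Once this distribution is fixed, all remaining verifications reduce to applications of the bracket rule of Proposition \ref{rsd} and of identity (\ref{1}).
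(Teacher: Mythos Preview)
The paper does not supply its own proof of this proposition: it is stated as a combination of \cite[Ch.\,2, \S\,3]{Py69}, \cite[Ch.\,1, thm.\,1.125]{Kn02} and \cite[Ch.\,1, lem.\,1.3.10, lem.\,1.4.12 \& prop.\,1.5.10]{Ko14}, with the infinitesimal content then recorded separately in Lemma~\ref{psds}. Your sketch is exactly an unpacking of that citation chain --- Harish-Chandra's strongly orthogonal roots, the $C_r/BC_r$ shape of the restricted root system for Hermitian symmetric spaces of non-compact type, and Pyatetskii-Shapiro's rank-peeling --- so it is aligned with what the paper invokes rather than a different route.

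Two small points worth tightening. First, your description of $n_j$ is muddled; once the allocation is fixed it is simply $n_j = 1 + \tfrac12\dim V_j$, with $V_j$ the symplectic slot of $\fs_j$ as in Lemma~\ref{psds}. Second, the delicate part you flag --- distributing the off-diagonal root spaces $\fg_{\frac12(\gamma_i\pm\gamma_j)}$ so that both the ideal filtration and the Heisenberg shape of each $\fs_j$ hold --- is genuinely the content of \cite[Ch.\,2, lem.\,1]{Py69}, and the paper likewise defers to that reference rather than reproving it; your acknowledging this as the main obstacle and pointing to the same source is the honest move here.
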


\noindent In some sense, the complex unit ball of $\mathbb{C}^{N}$ is part of the building blocks of every symmetric bounded domain. We will further explicit our quantization method for this elementary case. 

Originally, this decomposition was written at the infinitesimal level from \cite[Ch.\,2, lem.\,1]{Py69} where the previous proposition finds its root. We can formulate its Lie algebraic version in the following way.

\begin{lem}\label{psds}
The Lie algebra $\fs$ can be decomposed as $$\fs = \left(...\left(\fs_r \ltimes \fs_{r-1}\right) \ltimes ... \ltimes \fs_2\right) \ltimes \fs_1$$ where, for each $1 \leq j \leq r$, the factor $\fs_j$ is a Lie subalgebra of $\fs$ which contains:
\\ \scriptsize $\bullet$ \normalsize \,\,a generator $E_j$ of a one-dimensional ideal of $\mathfrak{s}_j$,
\\ \scriptsize $\bullet$ \normalsize \,\,a vector subspace $V_j \subset \mathfrak{s}_j$ of dimension $2\left(n_j - 1\right) \in \mathbb{N}$ endowed with a symplectic form $\Omega_j \in V_j^\star \otimes V_j^\star$,
\\ \scriptsize $\bullet$ \normalsize \,\,an element $H_j \notin V_j \oplus \mathbb{R} E_j$,
\\ such that $$\fs_j = \mathbb{R} H_j \ltimes \left(V_j \oplus \mathbb{R} E_j\right)$$ with the Lie bracket described by the equalities $$\left[v_j, E_j\right] = 0\text{\hspace{0.2 mm}}, \,\,\, \left[v_j, v_j^\prime\right] = \Omega_j\left(v_j, v_j^\prime\right) E_j \text{\,\,\, and \,\,\,} \left[H_j, v_j + z E_j\right] = v_j + 2 z E_j$$ for all $v_j, v_j^\prime \in V_j$ and $z \in \mathbb{R}$. The Lie algebra structure of $\fs$ satisfies $$\left[X, H_j\right] = \left[X, E_j\right] = 0 \text{\,\,\, and \,\,\,} \ad_X \in \mathfrak{sp}\left(V_j, \Omega_j\right)$$ for each $1 \leq j \leq r-1$ and $X \in \left(\fs_r \ltimes ...\right) \ltimes \fs_{j+1}$.
\end{lem}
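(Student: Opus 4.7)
The plan is to derive the Lie algebraic decomposition directly from Proposition \ref{cool} by differentiating the iterated semidirect product of groups at the identity. Since each $\mathbb{S}_j$ is isomorphic to the Iwasawa group of $SU(1, n_j)$, its Lie algebra $\fs_j$ must coincide with the Iwasawa algebra of $\mathfrak{su}(1, n_j)$, whose structure is the well-known "extended Heisenberg" algebra obtained from the $BC_1$ restricted root system (or the $A_1$ system when $n_j = 1$). The semidirect product structure on $\fs$ then follows from the group-level semidirect product by taking derivatives of the corresponding conjugation action.

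First I would make the structure of each $\fs_j$ explicit. The restricted root system of $\mathfrak{su}(1, n_j)$ has a single simple positive root $\alpha_j$ of multiplicity $2(n_j - 1)$ and a double root $2\alpha_j$ of multiplicity $1$, yielding a decomposition $\fs_j = \mathbb{R} H_j \oplus V_j \oplus \mathbb{R} E_j$ in which I identify $V_j$ with the root space of $\alpha_j$ and $\mathbb{R} E_j$ with the root space of $2 \alpha_j$, and choose $H_j$ so that $\alpha_j(H_j) = 1$. The bracket relations inside $\fs_j$ then follow from $[\fg_\lambda, \fg_\mu] \subset \fg_{\lambda + \mu}$ of Proposition \ref{rsd}: one gets $[H_j, v_j] = v_j$, $[H_j, E_j] = 2 E_j$, $[v_j, E_j] = 0$ and $[v_j, v_j'] \in \fg_{2\alpha_j} = \mathbb{R} E_j$, which defines an alternating bilinear form $\Omega_j$ by $[v_j, v_j'] = \Omega_j(v_j, v_j') E_j$. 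Non-degeneracy of $\Omega_j$ follows because the centre of $\fs_j$ is trivial (equivalently, because the action of $\mathbb{S}_j$ on the unit ball of $\mathbb{C}^{n_j}$ is faithful).

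To obtain the claimed compatibility between distinct $\fs_j$'s, I would differentiate the semidirect action of each $\mathbb{S}_{j+1}, \ldots, \mathbb{S}_r$ on $\mathbb{S}_j$ at the identity. Since this action preserves the group structure of $\mathbb{S}_j$, the resulting $\ad_X|_{\fs_j}$ is a derivation of $\fs_j$ for every $X$ in an upper factor. The two relations $[X, H_j] = 0$ and $[X, E_j] = 0$ express that the Pyatetskii-Shapiro construction of \cite{Py69} stabilizes both the grading element $H_j$ and the canonical direction $E_j$ inside the centre of the nilradical of $\fs_j$. Feeding these annihilations into the Jacobi identity applied to $[v_j, v_j'] = \Omega_j(v_j, v_j') E_j$ yields
\[
\Omega_j\bigl([X, v_j], v_j'\bigr) + \Omega_j\bigl(v_j, [X, v_j']\bigr) = 0,
\]
that is $\ad_X \in \mathfrak{sp}(V_j, \Omega_j)$.

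The main difficulty I expect lies in the very first step, namely selecting the vectors $H_j$ and $E_j$ inside the ambient algebra $\fs$ in such a way that the global commutation relations $[X, H_j] = [X, E_j] = 0$ hold, not merely modulo the lower factors. Rather than making ad hoc choices, I would fix a Harish-Chandra system of strongly orthogonal non-compact positive roots $\gamma_1, \ldots, \gamma_r \in \Sigma^+$, take $E_j \in \fg_{\gamma_j}$ and $H_j \in \fa$ dual to $\gamma_j$ with respect to $\beta$, and set $V_j := \bigoplus_{k < j} \fg_{(\gamma_j + \gamma_k)/2} \oplus \fg_{\gamma_j/2}$ (the last summand being absent when $\gamma_j/2 \notin \Sigma$). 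Strong orthogonality of the $\gamma_j$'s, together with the additive property of root spaces, then makes the required cross-vanishings automatic and completes the verification.
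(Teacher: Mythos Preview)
The paper does not prove this lemma: it is presented as the Lie-algebraic reformulation of \cite[Ch.\,2, lem.\,1]{Py69}, drawing on the same references already cited for Proposition~\ref{cool}. So there is no argument in the paper to compare against; you are supplying what the paper outsources.

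Your overall strategy is the right one. Differentiating the group decomposition of Proposition~\ref{cool} yields the iterated semidirect product, the internal structure of each $\fs_j$ is exactly the Iwasawa algebra of $\mathfrak{su}(1,n_j)$ as you describe, and your Jacobi-identity derivation of $\ad_X \in \mathfrak{sp}(V_j,\Omega_j)$ from $[X,E_j]=0$ is clean and correct. The genuine gap is in the explicit strongly-orthogonal-root construction you propose for the cross-relations. With $H_j \in \fa$ dual to $\gamma_j$ and your assignment $\fg_{(\gamma_{j'}+\gamma_k)/2} \subset V_{j'}$ for $k<j'$, taking $k=j<j'$ puts a vector $X$ into $\fs_{j'}$ with $[H_j,X]=\tfrac{1}{2}\gamma_j(H_j)\,X \neq 0$, contradicting the required $[X,H_j]=0$. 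Moreover, the restricted root system of $\fg$ (of type $C_r$ or $BC_r$) also contains the half-differences $(\gamma_k-\gamma_i)/2$, which your formula for $V_j$ omits, so the $V_j$'s you write down do not exhaust $\fn$. Pyatetskii-Shapiro's construction in \cite{Py69} proceeds by an inductive fibration of Siegel domains rather than by a direct root-space assignment, and the grading elements $H_j$ produced there need not lie in $\fa$ (note the paper writes $\fa \simeq \bigoplus_j \mathbb{R} H_j$, not equality). Either follow that inductive route, or repair the root bookkeeping; once that is done, the rest of your argument goes through.
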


\begin{rem} For each $1 \leq j \leq r$, the Lie subalgebra $\fs_j \subset \fs$ is isomorphic to the Lie algebra of the Iwasawa group of $SU\left(1, n_j\right)$; \cite[Ch.\,1, prop.\,1.5.10]{Ko14}.
\end{rem}

\noindent Let's point out that both the number of Lie subalgebras $\fs_j$ and the number of Lie subgroups $\mathbb{S}_j$ in these Pyatetskii-Shapiro decompositions correspond to the rank of the domain $\mathbb{D}$. This fact is not completely obvious in the statement \cite[Ch.\,2, lem.\,1]{Py69} but it can be deduced from the relations $$\fn = \left[\fs, \fs\right] = \bigoplus^{r}_{j = 1} \left(V_j \oplus \mathbb{R} E_j\right) \text{\, \, \, and \, \, \,} \fa \,\simeq\, \bigoplus^{r}_{j = 1} \,\mathbb{R} H_j$$ further in the reference \cite[Ch.\,2, \S\,3]{Py69}.

\section{Intertwining invariant deformation quantizations}

Till the end of this article, we will work through the identification $\mathbb{D} \simeq \mathbb{S}$.

\noindent We now introduce the premises of a strategy leading to a realization of the space of the $G$-invariant star-products on the domain $\mathbb{D}$. This strategy is based on the \emph{retract method} initiated in \cite{B+09} and further extended by Bieliavsky both in for formal and non-formal deformation quantizations; \cite[Ch.\,2, \S\,5\,\&\,8]{Ko14}, \cite{Bi17}. Roughly speaking, in this context, this method can be described by two steps:

\vspace{-5 mm}

\begin{enumerate}[(i)]
\item \textit{computing a set of invariant deformation quantizations on a curvature contraction of $\mathbb{D}$ sharing a common symmetry group with $\mathbb{D}$ ;}
\item \textit{intertwining these deformation quantizations with equivariant operators reversing the contraction process.}
\end{enumerate}

\vspace{-5 mm}

\noindent This approach is intuitively motivated by the fact that it should be easier to compute invariant deformation theory on a curvature contraction of $\mathbb{D}$. Once step \texttt{(i)} is completed, the difficulty consists in reversing the contraction process. In the case of formal deformation quantizations, ie. star-products, the intertwiners are expressed as formal differential operators called \emph{equivalence of invariant star-products}. In the case of non-formal deformation quantizations, intertwiners calculus involves equivalence of Lie group representations.

\subsection{Equivalence of invariant star-products}

Our starting point is the recent memoir \cite{BG15} in which Bieliavsky and Gayral developped a formal and non-formal left-invariant deformation theory on every negatively curved left-invariant K\"ahlerian Lie group. In particular, their work yields an explicit infinite dimensional parameter family of $\mathbb{S}$-invariant star-products on $\mathbb{S}$, each of them underlying a non-formal deformation quantization. With the objective of exploiting this major result, we are going to use well-established properties of star-products in order to transform such $\mathbb{S}$-invariant star-products into $G$-invariant ones.

\begin{df}
Let $G_1$ be a Lie subgroup of $G$. Two $G_1$-invariant star-products $\ast_\nu$ and $\ast^\prime_\nu$ on $\mathbb{D}$ are said to be \emph{$G_1$-equivalent} if there exists a sequence $\left\{T_k : k \in \mathbb{N}\backslash\left\{0\right\}\right\}$ of $\mathbb{C}\llbracket\nu\rrbracket$-linear differential operators on $\mathcal{C}^\infty\left(\mathbb{D}\right)\llbracket\nu\rrbracket$ that vanish on constants, commute with the action $\tau$ of $G_1$ and are such that the operator 
\begin{eqnarray}\label{equiv}
T = \Id + \sum_{k = 1}^\infty \nu^k T_k \text{ \,\,satisfies\,\, } T\left(f_1 \ast_\nu f_2\right) = T\left(f_1\right) \ast^\prime_\nu T\left(f_2\right)
\end{eqnarray}
for each $f_1, f_2 \in \mathcal{C}^\infty\left(\mathbb{D}\right)$. In this case, the operator $T$ is called a \emph{$G_1$-equivalence} and this relation between $\ast_\nu$ and $\ast^\prime_\nu$ is denoted by $\ast^\prime_\nu = T\left(\ast_\nu\right)$. 
\end{df}

\noindent In the present text, if $\ast_\nu$ is a $\mathbb{S}$-invariant star-product on $\mathbb{D}$, the notation $\Op^{\mathbb{S}}\left(\ast_\nu\right)$ will designate the collection of $\mathbb{S}$-equivalences between $\ast_\nu$ and any other $\mathbb{S}$-equivalent $\mathbb{S}$-invariant star-product on $\mathbb{D}$. The following remark from harmonic analysis is quite important in a non-formal perspective.

\begin{rem} Let $T$ be a $\mathbb{S}$-equivalence of star-products of the form (\ref{equiv}). Through the identification $\mathbb{D} \simeq \mathbb{S}$, given that $\tau_s = L_s$ for all $s \in \mathbb{S}$, the operator $T_k$ has to commute with the left-invariant translations on $\mathbb{S}$ for each $k \in \mathbb{N}\backslash\left\{0\right\}$. As a consequence, the $\mathbb{S}$-equivalence $T$ should necessarily be an invertible linear convolution operators on $\mathcal{C}^\infty\left(\mathbb{S}\right)\llbracket\nu\rrbracket$. If $ds$ denotes the left-invariant Haar measure on $\mathbb{S}$, we then have 
\begin{eqnarray}\label{conv} 
T : f \in \mathcal{D}\left(\mathbb{S}\right) \mapsto \left[T\left(f\right) : s_0 \in \mathbb{S} \mapsto \int_{\mathbb{S}} u_T\left(s^{-1} s_0\right) f\left(s\right) ds \right] 
\end{eqnarray}
where $u_T \in \mathcal{D}^\prime\left(\mathbb{S}\right)\llbracket\nu\rrbracket$ is a formal distribution on $\mathbb{S}$ associated with $T$.
\end{rem}

\noindent In view of this remark, it would be legitimate to express some of these $\mathbb{S}$-equivalences within a functional framework allowing to compute explicitly $G$-invariant non-formal deformation quantizations on $\mathbb{D}$ in further work.

\subsection{Classification results}

For every explicit $\mathbb{S}$-invariant star-product $\ast_\nu$ obtained in \cite{BG15}, a natural approach to our quantization problem would be to determine the set of $\mathbb{S}$-equivalences of star-products $T \in \Op^{\mathbb{S}}\left(\ast_\nu\right)$ such that $T\left(\ast_\nu\right)$ is $G$-invariant. 

\noindent Nevertheless, in order to justify such method, we have to prove that every $G$-invariant star-product on $\mathbb{D}$ can be reached in this way. For this, we need the following classification result.

\begin{prop}\label{classif} \emph{\cite[thm.\,4.1]{B+98}}
For every Lie subgroup $G_1 \subset G$, the $G_1$-equivalence classes of $G_1$-invariant star-products on $\mathbb{D}$ are parametrized by the space of formal power series with coefficients in the second cohomology space of the $G_1$-invariant de Rham complex on $\mathbb{D}$.
\end{prop}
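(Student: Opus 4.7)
The strategy I would follow mirrors the Deligne/Bertelson--Cahen--Gutt classification of star-products on a symplectic manifold, adapted to the invariant setting. The idea is to build, by induction on the order in $\nu$, a characteristic map from $G_1$-invariant star-products modulo $G_1$-equivalence to $H^2_{\mathrm{inv}}(\mathbb{D})[[\nu]]$, where $H^\bullet_{\mathrm{inv}}(\mathbb{D})$ denotes the cohomology of the $G_1$-invariant de Rham complex $\big(\Omega^\bullet(\mathbb{D})^{G_1}, d\big)$. The two ingredients that do most of the work are the differential Hochschild--Kostant--Rosenberg theorem, which identifies the differential Hochschild cohomology of $\mathcal{C}^\infty(\mathbb{D})$ with $\Gamma(\Lambda^\bullet T\mathbb{D})$ and hence, via $\omega$, with $\Omega^\bullet(\mathbb{D})$, and its $G_1$-equivariant refinement ensuring that $G_1$-invariant cochains whose Hochschild coboundary is $G_1$-invariant and trivial in $HH^\bullet$ admit a $G_1$-invariant primitive modulo exact terms of the invariant de Rham complex.

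First, I would attach to every $G_1$-invariant star-product $\ast_\nu = \sum \nu^k C_k$ a characteristic series. At order $k$, associativity modulo $\nu^{k+1}$ forces the antisymmetric part $C_k^- := \tfrac12(C_k - C_k^{\mathrm{op}})$ to produce, via HKR and the identification induced by $\omega$, a closed 2-form $\omega_k \in \Omega^2(\mathbb{D})$; the $G_1$-invariance of $\ast_\nu$ forces $\omega_k \in \Omega^2(\mathbb{D})^{G_1}$. The class $\mathrm{c}(\ast_\nu) := \sum_{k \ge 1} \nu^k [\omega_k] \in H^2_{\mathrm{inv}}(\mathbb{D})[[\nu]]$ is the candidate invariant.

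Next, I would check this class is well defined on $G_1$-equivalence classes and in fact classifies them. If $T = \Id + \sum \nu^k T_k$ is a $G_1$-equivalence from $\ast_\nu$ to $\ast'_\nu$ and $k_0$ is the smallest order at which $C_{k_0} \neq C'_{k_0}$, then $C'_{k_0} - C_{k_0} = \delta_H T_{k_0} + (\text{symmetric lower-order corrections})$, where $\delta_H$ is the Hochschild coboundary; invariance of $T_{k_0}$ together with the invariant HKR argument shows that the antisymmetric part of this difference is $d$-exact in the invariant de Rham complex, so $[\omega_{k_0}] = [\omega'_{k_0}]$ in $H^2_{\mathrm{inv}}(\mathbb{D})$. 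Iterating, $\mathrm{c}(\ast_\nu) = \mathrm{c}(\ast'_\nu)$. Conversely, given two invariant star-products with the same class, one constructs the required $T$ order by order: at each step the obstruction to promoting a partial equivalence to the next order lies in the image of $\delta_H$ restricted to invariant cochains and, by the invariant HKR step, vanishes precisely because the characteristic classes agree.

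Finally, for surjectivity I would use a Fedosov-type construction with a $G_1$-invariant input. The existence of a $G_1$-invariant symplectic connection on $\mathbb{D}$ (guaranteed here since $\mathbb{D}$ is a Hermitian symmetric space and $G_1 \subset G$ acts by symplectic automorphisms of a canonical connection) allows one to run Fedosov's recursive construction inside the $G_1$-invariant Weyl algebra bundle, feeding any prescribed formal series $\omega + \sum_{k \ge 1} \nu^k \omega_k$ of closed $G_1$-invariant 2-forms; the resulting star-product is $G_1$-invariant with characteristic class $\sum_k \nu^k [\omega_k]$. I expect the main technical obstacle to be precisely the invariant HKR statement used in the classification step: one must ensure that the acyclicity which makes the non-invariant classification work descends to the subcomplex of $G_1$-invariant differential cochains, which requires an invariant homotopy operator (or an averaging argument when $G_1$ is compact; for general $G_1$ one argues via the local normal form of invariant polydifferential operators along $G_1$-orbits, which is what \cite{B+98} ultimately provides).
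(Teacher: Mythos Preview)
The paper does not supply its own proof of this proposition: it is quoted verbatim as \cite[thm.\,4.1]{B+98} and used as a black box. There is therefore nothing in the present text to compare your argument against line by line.

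That said, your sketch is broadly faithful to the strategy of the cited reference. One point of emphasis: in \cite{B+98} the argument is organised around Fedosov's construction rather than around a Deligne-style characteristic class built from the HKR map. Concretely, Bertelson--Bieliavsky--Gutt first show that every $G_1$-invariant star-product is $G_1$-equivalent to a Fedosov star-product $\ast_\nu^{(\nabla,\omega_\nu)}$ for a $G_1$-invariant symplectic connection $\nabla$ and some $\omega_\nu \in Z^2(\mathbb{D})^{G_1}\llbracket\nu\rrbracket$ (this is exactly the \cite[\S\,4, prop.\,4.1]{B+98} invoked later in the present paper), and then prove that two such Fedosov star-products are $G_1$-equivalent if and only if the difference $\omega_\nu-\omega'_\nu$ is $d$ of a $G_1$-invariant $1$-form (the \cite[\S\,3, thm.\,3.1 \& 3.2]{B+98} also invoked here). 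Your outline reaches the same parametrisation but packages the obstruction theory through Hochschild cohomology and an invariant HKR step; this is closer to \cite{B+97} and \cite{De95} than to \cite{B+98}. Both routes are valid; the Fedosov route has the advantage that the ``invariant HKR'' difficulty you flag at the end is bypassed, since the recursion lives entirely inside the $G_1$-invariant Weyl bundle and no averaging or invariant homotopy operator is needed.
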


\noindent As a consequence of this proposition, the $\mathbb{S}$-equivalence classes of $\mathbb{S}$-invariant star-products on $\mathbb{D} \simeq \mathbb{S}$ are parametrized by the space of formal power series with coefficients in the second Chevalley-Eilenberg cohomology space $H^2_{\CE}\left(\fs\right)$ for the trivial representation of $\fs$ on $\mathbb{C}$. Let's compute explicitly this cohomology space by using the Pyatetskii-Shapiro decomposition of $\fs$. 

\begin{lem}\label{2chc}
In the notations of lemma \ref{psds}, an anti-symmetric bilinear map \,$c : \fs \times \fs \rightarrow \mathbb{C}$\, defines a Chevalley-Eilenberg $2$-cocycle for the trivial representation of $\fs$ on $\mathbb{C}$ if and only if it satisfies the following conditions:

\vspace{-11 mm}

\begin{eqnarray}
\nonumber &\small (i) \normalsize& c\left(v_j, E_j\right) = 0 \text{ \, \,and\, \, } 2\,c\left(v_j, v_j^\prime\right) = \Omega_j\left(v_j, v_j^\prime\right) c\left(H_j, E_j\right) \text{ \,\,\,for each $1 \leq j \leq r$ and $v_j, v_j^\prime \in V_j$;} 
\\ \nonumber &\small (ii) \normalsize& c\left(X, E_j\right) = 0 \text{ \,\, for each $1 \leq j < r$ and $X \in \left(\fs_r \ltimes ...\right) \ltimes \fs_{j+1}$;}
\\ \nonumber &\small (ii') \normalsize& c\left(X, v_j\right) = c\left(H_j, \left[X, v_j\right]\right) \text{ \,\,\,for each $1 \leq j < r$, $X \in \left(\fs_r \ltimes ...\right) \ltimes \fs_{j+1}$ and $v_j \in V_j$;}
\\ \nonumber &\small (iii) \normalsize& c\left(v_k, H_j\right) = 0 \text{ \, \,and\, \, } c\left(E_k, H_j\right) = 0 \text{ \,\, for each $1 \leq j < k \leq r$ and $v_k \in V_k$.}
\end{eqnarray}

\vspace{-5 mm}

\noindent In particular, the data of such a Chevalley-Eilenberg $2$-cocycle $c$ is completely determined by an arbitrary choice of: 
\\ \scriptsize $\bullet$ \normalsize \,\,linear maps \,$c_j : V_j \oplus \mathbb{R} E_j \rightarrow \mathbb{C} : X \mapsto c_j\left(X\right) := c\left(H_j, X\right)$\, for \,$1 \leq j \leq r$;
\\ \scriptsize $\bullet$ \normalsize \,\,constants \,$c_{jk} := c\left(H_j, H_k\right) \in \mathbb{C}$\, for \,$1 \leq j < k \leq r$.
\end{lem}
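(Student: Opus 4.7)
The plan is to write out the Chevalley-Eilenberg 2-cocycle condition
$$c([X,Y],Z)+c([Y,Z],X)+c([Z,X],Y)=0 \quad (\star)$$
on a basis of $\fs$ drawn from the Pyatetskii-Shapiro decomposition of Lemma \ref{psds}, namely the generators $H_j, E_j$ together with bases of each $V_j$. Necessity of (i)--(iii) and (ii') is obtained by choosing the arguments of $(\star)$ carefully: the triples $(H_j, v_j, E_j)$ and $(H_j, v_j, v_j')$ inside $\fs_j$ yield the two parts of (i) after using $[H_j,v_j]=v_j$, $[v_j,E_j]=0$, $[E_j,H_j]=-2E_j$ and $[v_j,v_j']=\Omega_j(v_j,v_j')E_j$; the triples $(H_j, X, E_j)$ and $(H_j, X, v_j)$ with $X$ in the higher factor $(\fs_r \ltimes \ldots) \ltimes \fs_{j+1}$ yield (ii) and (ii') via $[H_j,X]=[X,E_j]=0$ together with $[X, V_j] \subset V_j$; and the triples $(H_j, H_k, v_k)$ and $(H_j, H_k, E_k)$ with $j<k$ yield (iii).

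For sufficiency, $(\star)$ must be checked on every remaining triple of basis elements. Most cases collapse immediately from the bracket relations of Lemma \ref{psds} together with conditions (ii)--(iii). The one delicate case is a triple $(X, v_j, v_j')$ with $X$ in the higher factor of $\fs_j$ and $v_j, v_j' \in V_j$: after rewriting each summand via (i), (ii), and (ii'), $(\star)$ reduces to
$$\tfrac{1}{2}\, c(H_j, E_j) \bigl[\Omega_j([X, v_j], v_j') + \Omega_j(v_j, [X, v_j'])\bigr] = 0,$$
which is exactly the symplectic invariance $\ad_X \in \mathfrak{sp}(V_j, \Omega_j)$ provided by Lemma \ref{psds}. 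Triples whose three arguments live in distinct factors $\fs_{j_1}, \fs_{j_2}, \fs_{j_3}$ reduce, after applying (ii') to each summand, to $c(H_{j_1}, -)$ evaluated on the left-hand side of a Jacobi identity in $\fs$, and thus vanish.

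The parametrization by $\{c_j\}$ and $\{c_{jk}\}$ is then read off (i)--(iii) and (ii') directly: on $\fs_j \times \fs_j$, condition (i) recovers $c$ from the linear form $c_j = c(H_j, -)$ on $V_j \oplus \mathbb{R} E_j$; on $\fs_k \times \fs_j$ with $j<k$, (ii) and (iii) force $c$ to vanish on all pairings of $E_k, v_k$ with $E_j, H_j$, single out the scalar $c_{jk} = c(H_j, H_k)$, and (ii') expresses $c(X, v_j)$ for $X \in \fs_k$ as $c_j([X, v_j])$. The main obstacle will be the symplectic case in the sufficiency step: it is the only place where the Pyatetskii-Shapiro hypothesis $\ad_X \in \mathfrak{sp}(V_j, \Omega_j)$ enters nontrivially, and it is what distinguishes the CE cocycle condition on $\fs$ from a purely combinatorial check on an iterated semidirect product of Heisenberg-type algebras.
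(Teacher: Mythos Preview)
Your argument is correct and covers the same essential content as the paper's proof, but the organization is different. The paper proceeds by \emph{induction on the rank $r$}: writing $\fs = \mathfrak{S} \ltimes \fs_1$ with $\mathfrak{S} = (\fs_{r} \ltimes \ldots) \ltimes \fs_2$, it assumes the characterization for $\mathfrak{S}$ and for $\fs_1$ separately, and then only has to analyse $\delta c$ on mixed triples in $\mathfrak{S}\times\mathfrak{S}\times\fs_1$ and $\mathfrak{S}\times\fs_1\times\fs_1$. The five displayed identities (II), (II'), (0), (III), (0') in the paper correspond exactly to your triples $(H_1,X,E_1)$, $(H_1,X,v_1)$, $(X,v_1,E_1)$/$(X,v_1,v_1')$, $(X,Y,H_1)$/$(X,Y,E_1)$, and $(X,Y,v_1)$ respectively; the symplectic invariance $\ad_X\in\mathfrak{sp}(V_1,\Omega_1)$ is invoked in (0), and the Jacobi identity in (0'), just as you do.

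Your direct approach (check all basis triples without induction) forces you to handle a few more cases explicitly, in particular triples spread across three distinct factors $\fs_{j_1},\fs_{j_2},\fs_{j_3}$. Your claim that these reduce via (ii') to $c(H_{j_1},\,\text{Jacobi expression})$ is correct in spirit, but note that not every summand is treated by (ii'): some terms vanish directly from (ii), (iii), or the bracket relations $[X,H_j]=[X,E_j]=0$, and (ii') is only applied to terms of the form $c(\text{higher},v_{j_1})$. The inductive packaging in the paper absorbs these bookkeeping distinctions into the hypothesis on $\mathfrak{S}$, which is its main advantage. Conversely, your direct check makes the role of each relation (i)--(iii), (ii') more transparent term by term.
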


\begin{proof} 
Let $c : \fs \times \fs \rightarrow \mathbb{C}$ be an anti-symmetric bilinear map. By definition, it is a Chevalley-Eilenberg $2$-cocycle for the trivial representation of $\fs$ on $\mathbb{C}$ if and only if it satisfies
\begin{eqnarray}\nonumber
\delta c \left(X, Y, Z\right) := c\left(\left[X, Y\right], Z\right) + c\left(\left[Y, Z\right], X\right) + c\left(\left[Z, X\right], Y\right) = 0
\end{eqnarray}
for all $X, Y, Z \in \mathfrak{s}$. We are going to use the properties of $c$ and the Lie algebra structure of $\fs$ described in lemma \ref{psds} in order to implement explicitly this condition on $c$. 
We proceed by induction on the rank $r$ of the domain $\mathbb{D} \simeq \mathbb{S}$. 

\vspace{-3 mm}

\noindent \textsc{Initial step.} In the case $r = 1$, we can remark that relation $(i)$ is equivalent to the equations \,$\delta c \left(H_1, E_1, v_1\right) = 0$\, and \,$\delta c \left(H_1, v_1, v^\prime_1\right) = 0$\, for $v_1, v^\prime_1 \in V_1$. An easy computation shows that these equalities implies $\delta c = 0$. As a consequence, the result follows given that relations $(ii)$, $(ii')$ and $(iii)$ are trivially satisfied.

\vspace{-3 mm}

\noindent \textsc{Inductive step.} Let's assume that the statement of the lemma is true for $r = r_0 \in \mathbb{N}\backslash\left\{0\right\}$ and let's prove it for $r = r_0+1$. We set $$\mathfrak{S} := \left(...\left(\fs_{r_0 + 1} \ltimes \fs_{r_0}\right) \ltimes ... \ltimes \fs_3\right) \ltimes \fs_2, \,\,\,\,\, c_\mathfrak{S} := \left.c\right|_{\mathfrak{S} \times \mathfrak{S}} \text{ \,\, and \,\, } c_{\mathfrak{s}_1} := \left.c\right|_{\fs_1 \times \fs_1}.$$ We notice that $c$ is a Chevalley-Eilenberg $2$-cocycle if and only if $c_\mathfrak{S}$ and $c_{\mathfrak{s}_1}$ are Chevalley-Eilenberg $2$-cocycles and $\delta c$ vanishes on $\mathfrak{S} \times \mathfrak{S} \times \fs_1$ and $\mathfrak{S} \times \fs_1 \times \fs_1$. For each $X, Y \in \mathfrak{S}$ and $v_1, v^\prime_1 \in V_1$, the Lie algebra structure of $\fs$ yields the equalities

\vspace{-10.5 mm}

\begin{eqnarray}
\nonumber \text{\hspace{-2.5 mm}} &\text{\small (II)} \normalsize& \delta c\left(H_1, E_1, X\right) = 2\,c\left(E_1, X\right),
\\ \nonumber \text{\hspace{-2.5 mm}} &\text{\small (II')} \normalsize& \delta c\left(H_1, v_1, X\right) = c\left(v_1, X\right) + c\left(H_1, \left[X, v_1\right]\right),
\\ \nonumber \text{\hspace{-2.5 mm}} &\text{\small (0)} \normalsize& \delta c\left(v_1, E_1, X\right) = c\left(\left[X, v_1\right], E_1\right) \text{ \,\,and\,\, } \delta c\left(v_1, v_1^\prime, X\right) = \Omega_1\left(v_1, v_1^\prime\right) c\left(E_1, X\right) \text{ \,as\, $\ad_X \in \mathfrak{sp}\left(V_1, \Omega_1\right)$},
\\ \nonumber \text{\hspace{-2.5 mm}} &\text{\small (III)} \normalsize& \delta c\left(H_1, X, Y\right) = c\left(\left[X, Y\right], H_1\right) \text{ \,\,and\,\, } \delta c\left(E_1, X, Y\right) = c\left(\left[X, Y\right], E_1\right),
\\ \nonumber \text{\hspace{-2.5 mm}} &\text{\small (0')} \normalsize& \delta c\left(v_1, X, Y\right) = c\left(\left[v_1, X\right], Y\right) + c\left(\left[X, Y\right], v_1\right) + c\left(\left[Y, v_1\right], X\right).
\end{eqnarray}

\vspace{-5 mm}

\noindent \scriptsize $\bullet$ \normalsize \,\,\textsc{Necessary condition.} If $c$ is a Chevalley-Eilenberg $2$-cocycle, as $c_\mathfrak{S}$ and $c_{\mathfrak{s}_1}$ satisfy our induction hypothesis, the necessary condition will be proven if we have relations $(ii)$, $(ii')$ and $(iii)$ for $j = 1$. These relations can be respectively deduced from {\small (II)}, {\small (II')} and {\small (III)} given that $$\delta c = 0 \text{ \, and \, } \left[\mathfrak{S}, \mathfrak{S}\right] = \bigoplus^{r_0 + 1}_{j = 2} \left(V_j \oplus \mathbb{R} E_j\right).$$

\noindent \scriptsize $\bullet$ \normalsize \,\,\textsc{Sufficient condition.} Let's assume that relations $(i)$, $(ii)$, $(ii')$ and $(iii)$ are satisfied. Then, by using relation $(ii')$ and the Jacobi identity, the equation {\small (0')} can be written $$\delta c\left(v_1, X, Y\right) = c\left(H_1, \left[\left[v_1, X\right], Y\right]\right) + c\left(H_1, \left[\left[X, Y\right], v_1\right]\right) + c\left(H_1, \left[\left[Y, v_1\right], X\right]\right) = 0$$ for each $X, Y \in \mathfrak{S}$ and $v_1 \in V_1$.
From the combinaison of this last equality with relations {\small (II)}-$(ii)$, {\small (II')}-$(ii')$, {\small (0)}-$(i)$-$(ii)$ and {\small (III)}-$(iii)$, it is clear that $\delta c$ vanishes on $\mathfrak{S} \times \mathfrak{S} \times \fs_1$ and $\mathfrak{S} \times \fs_1 \times \fs_1$. Since the maps $c_\mathfrak{S}$ and $c_{\mathfrak{s}_1}$ are Chevalley-Eilenberg $2$-cocycles by induction hypothesis, the proof of the sufficient condition is complete.
\end{proof}

\vspace{3 mm}

\begin{lem}\label{2cbb}
Let $c : \fs \times \fs \rightarrow \mathbb{C}$ be a Chevalley-Eilenberg $2$-cocycle for the trivial representation of $\fs$ on $\mathbb{C}$. The following assertions are equivalent: 
\\ $(1)$ \,the $2$-cocycle $c$ is a Chevalley-Eilenberg $2$-coboundary;
\\ $(2)$ \,in the notations of lemma \ref{psds}, we have $c\left(H_j, H_k\right) = 0$ for each $1 \leq j \leq r$ and $1 \leq k \leq r$;
\\ $(3)$ \,the $2$-cocycle $c$ vanishes on $\fa \times \fa$.
\end{lem}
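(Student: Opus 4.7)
My plan is to establish the chain $(1) \Rightarrow (2) \Leftrightarrow (3) \Rightarrow (1)$. The equivalence $(2) \Leftrightarrow (3)$ is immediate from the identification $\fa = \bigoplus_{j=1}^{r} \mathbb{R} H_j$ together with the antisymmetry of $c$ (the diagonal pairs $c(H_j, H_j)$ are automatically zero). The implication $(1) \Rightarrow (2)$ is just as quick: if $c = \delta\alpha$ for some linear form $\alpha : \fs \to \mathbb{C}$, then $c(H_j, H_k) = -\alpha([H_j, H_k]) = 0$ since $\fa$ is abelian.

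The substantive direction is $(3) \Rightarrow (1)$, which I would prove by constructing an explicit primitive $\alpha \in \fs^\star$. By Lemma \ref{2chc}, the cocycle $c$ is entirely encoded by the linear forms $c_j := c(H_j, -)|_{V_j \oplus \mathbb{R} E_j}$ for $1 \leq j \leq r$ together with the scalars $c_{jk} = c(H_j, H_k)$ for $1 \leq j < k \leq r$. Hypothesis $(3)$ kills all the $c_{jk}$, so only the linear data $\left\{c_j\right\}_{1 \leq j \leq r}$ survive. Guided by the bracket relation $[H_j, v_j + z E_j] = v_j + 2 z E_j$ from Lemma \ref{psds}, the natural candidate is
$$\alpha(H_j) := 0, \qquad \alpha(v_j) := -c_j(v_j), \qquad \alpha(E_j) := -\tfrac{1}{2} c_j(E_j),$$
for $v_j \in V_j$, extended by linearity to the whole Pyatetskii-Shapiro decomposition of $\fs$.

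To conclude, I would verify $\delta\alpha = c$ on every type of pair arising from Lemma \ref{psds}. The within-block identities $\delta\alpha(H_j, v_j) = c_j(v_j)$, $\delta\alpha(H_j, E_j) = c_j(E_j)$, $\delta\alpha(v_j, E_j) = 0$ and $\delta\alpha(v_j, v_j') = \tfrac{1}{2}\Omega_j(v_j, v_j') c_j(E_j)$ follow directly from the definition of $\alpha$ and condition $(i)$ of Lemma \ref{2chc}. The abelian pair $(H_j, H_k)$ is handled by hypothesis $(3)$. For cross-block pairs with $X \in (\fs_r \ltimes \dots) \ltimes \fs_{j+1}$, the relations $[X, H_j] = [X, E_j] = 0$ and $\ad_X \in \mathfrak{sp}(V_j, \Omega_j)$ reduce everything to the single non-trivial identity $\delta\alpha(X, v_j) = -\alpha([X, v_j]) = c_j([X, v_j]) = c(H_j, [X, v_j])$, which matches $c(X, v_j)$ exactly by condition $(ii')$ of Lemma \ref{2chc}.

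The main obstacle is purely organizational rather than conceptual: one must run through the complete list of pair types in the Pyatetskii-Shapiro decomposition without missing a mixed sub-case. Each individual check is immediate once the bracket relations of Lemma \ref{psds} and the conditions $(i)$--$(iii)$ of Lemma \ref{2chc} are lined up, but the bookkeeping across the nested semi-direct product structure is what requires care.
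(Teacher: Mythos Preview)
Your construction of the primitive $\alpha$ and the subsequent verification against conditions $(i)$--$(iii)$ of Lemma~\ref{2chc} is exactly the paper's argument for $(2)\Rightarrow(1)$, up to the sign convention $\delta\alpha(X,Y)=\pm\alpha([X,Y])$. So the substantive part of your proof matches the paper.

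The gap is in your treatment of $(2)\Leftrightarrow(3)$. You declare it immediate from ``the identification $\fa=\bigoplus_{j=1}^{r}\mathbb{R}H_j$'', but the paper only asserts $\fa\simeq\bigoplus_{j}\mathbb{R}H_j$ as abstract abelian Lie algebras, and in the proof of this very lemma refers to them as ``two isomorphic abelian Lie subalgebras of $\fs$''. Nothing in Lemma~\ref{psds} places the $H_j$ inside $\fa$; they are merely elements of $\fs_j$ complementing $V_j\oplus\mathbb{R}E_j$. Consequently, hypothesis $(3)$ (vanishing on $\fa\times\fa$) does not by itself force $c(H_j,H_k)=0$, and your ``$(3)\Rightarrow(1)$'' is really a $(2)\Rightarrow(1)$ argument that never engages with $(3)$ on its own terms.

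The paper closes this gap by proving $(3)\Rightarrow(1)$ independently, via the restricted root space decomposition rather than the Pyatetskii--Shapiro one: it sets $\alpha'(H_\lambda):=0$ and $\alpha'(X):=c(H_\lambda,X)/\lambda(H_\lambda)$ for $X\in\fg_\lambda$, $\lambda\in\Sigma^+$, and then uses the cocycle identity together with $c|_{\fa\times\fa}=0$ to check $c(X,Y)=\alpha'([X,Y])$ directly on root vectors. This bypasses any need to relate $\fa$ to $\bigoplus_j\mathbb{R}H_j$. To repair your argument you would either need to supply that root-space construction, or else justify (from a more explicit version of the Pyatetskii--Shapiro decomposition than Lemma~\ref{psds} gives) that the $H_j$ can be chosen in $\fa$.
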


\begin{proof} 
We prove separately the implications $(1) \Rightarrow (2) \wedge (3)$, $(2) \Rightarrow (1)$ and $(3) \Rightarrow (1)$.

\vspace{-3.5 mm}

\noindent \scriptsize $\bullet$ \normalsize \,\,By definition, if $c$ is a Chevalley-Eilenberg $2$-coboundary, there exists a linear map $\alpha : \fs \rightarrow \mathbb{C}$ such that $c\left(X, Y\right) = \alpha\left(\left[X, Y\right]\right)$ for each $X, Y \in \fs$. Given that $\mathbb{R} H_1 \oplus ... \oplus \mathbb{R} H_r$ and $\fa$ are two isomorphic abelian Lie subalgebras of $\fs$, the implication $(1) \Rightarrow (2) \wedge (3)$ follows trivially.

\vspace{-3.5 mm}

\noindent \scriptsize $\bullet$ \normalsize \,\,If the $2$-cocycle $c$ satisfies $c\left(H_j, H_k\right) = 0$ for each $j, k \in \left\{1, ..., r\right\}$, we can easily use lemmas \ref{psds} and \ref{2chc} for checking that the linear map $\alpha$, defined on $\fs$ by $$\alpha\left(H_j\right) := 0, \,\,\,\, \alpha\left(v_j\right) := c\left(H_j, v_j\right) \text{ \, and \, } \alpha\left(E_j\right) := \,\frac{c\left(H_j, E_j\right)}{2} \text{ \,\, for all $1 \leq j \leq r$ and $v_j \in V_j$,}$$ is such that $c\left(X, Y\right) = \alpha\left(\left[X, Y\right]\right)$ for each $X, Y \in \fs$. This proves the implication $(2) \Rightarrow (1)$.

\vspace{-3.5 mm}

\noindent \scriptsize $\bullet$ \normalsize \,\,Let's assume that the $2$-cocycle $c$ vanishes on $\fa \times \fa$. In the notations of the root space decomposition of $\fg$, we define the linear map $\alpha^\prime : \fs \rightarrow \mathbb{C}$ by $$\alpha^\prime\left(H_\lambda\right) := 0 \text{ \, and \, } \alpha^\prime\left(X\right) := \,\frac{c\left(H_\lambda, X\right)}{\lambda\left(H_\lambda\right)} \text{ \,\, for all $\lambda \in \Sigma^+$ and $X \in \fg_{\lambda}$.}$$ Then, for each $\lambda, \mu \in \Sigma^+$, $X \in \fg_\lambda$ and $Y \in \fg_\mu$, the properties of $c$ as $2$-cocycle yields
\begin{eqnarray} \nonumber
c\left(H_\lambda, Y\right) \,\,\,=\,\,\, \frac{c\left(H_\lambda, \left[H_\mu, Y\right]\right)}{\mu\left(H_\mu\right)} &=& \frac{c\left(H_\mu, \left[H_\lambda, Y\right] \right) + c\left(Y, \left[H_\mu, H_\lambda\right]\right)}{\mu\left(H_\mu\right)} \\ \nonumber &=& \left(\frac{\mu\left(H_\lambda\right)}{\mu\left(H_\mu\right)}\right) c\left(H_\mu, Y\right) \,\,\,=\,\,\, \mu\left(H_\lambda\right) \alpha^\prime\left(Y\right) \,\,\,=\,\,\, \alpha^\prime\left(\left[H_\lambda, Y\right]\right)
\end{eqnarray}\begin{eqnarray} \nonumber
\text{and \,\,\,} c\left(X, Y\right) \,\,\,=\,\,\, \frac{c\left(\left[H_{\lambda+\mu}, X\right], Y\right) + c\left(X, \left[H_{\lambda+\mu}, Y\right]\right)}{\left(\lambda + \mu\right) \left(H_{\lambda+\mu}\right)} \,\,\,=\,\,\, \frac{c\left(H_{\lambda+\mu}, \left[X, Y\right]\right)}{\left(\lambda + \mu\right)\left(H_{\lambda+\mu}\right)} \,\,\,=\,\,\, \alpha^\prime\left(\left[X, Y\right]\right), \,\,\,\,\,\,\,
\end{eqnarray}
where the last equality comes from the relation $\left[X, Y\right] \in \left[\mathfrak{g}_\lambda, \mathfrak{g}_\mu\right] \subset \mathfrak{g}_{\lambda + \mu}$. As a consequence, we obtain $c\left(X^\prime, Y^\prime\right) = \alpha^\prime\left(\left[X^\prime, Y^\prime\right]\right)$ for each $X^\prime, Y^\prime \in \fs$ and the proof of the lemma is complete.
\end{proof} 

\noindent In the rest of the text, the notation $\left[c\right]$ will refer to the Chevalley-Eilenberg cohomology class of a Chevalley-Eilenberg $2$-cocycle $c$. The following corollary is direct from proposition \ref{classif} and lemmas \ref{2chc} and \ref{2cbb}.

\begin{cor}\label{H2S}
The map $$\left[c\right] \in H^2_{\CE}\left(\fs\right) \mapsto \left(c\left(H_j, H_k\right)\right)_{1 \leq j < k \leq r}$$ induces an isomorphism between $H^2_{\CE}\left(\fs\right)$ and $\mathbb{C}^{\frac{r\left(r-1\right)}{2}}$. In particular, the $\mathbb{S}$-equivalence classes of $\mathbb{S}$-\break invariant star-products on $\mathbb{D}$ are parametrized by $$\mathbb{C}^{\frac{r\left(r-1\right)}{2}}\llbracket\nu\rrbracket.$$
\end{cor}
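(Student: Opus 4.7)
The plan is to verify directly that the assignment $c \mapsto \bigl(c(H_j, H_k)\bigr)_{1 \leq j < k \leq r}$ descends to a well-defined $\mathbb{C}$-linear bijection on cohomology, and then to couple this isomorphism with proposition \ref{classif} to extract the parametrization of $\mathbb{S}$-equivalence classes. All the real content has already been packaged into lemmas \ref{2chc} and \ref{2cbb}, so the argument amounts to organizing their consequences.

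First, I would observe that the map $c \mapsto \bigl(c(H_j, H_k)\bigr)_{j < k}$ is manifestly $\mathbb{C}$-linear on the space of $2$-cocycles. To show it descends to $H^2_{\CE}(\fs)$, I invoke the implication $(1) \Rightarrow (2)$ of lemma \ref{2cbb}: every coboundary evaluates trivially on each $(H_j, H_k)$. For injectivity of the induced map, I take $[c]$ in its kernel, so $c(H_j, H_k) = 0$ for $j < k$, and then use antisymmetry to extend this to all pairs $(H_j, H_k)$ with $1 \leq j, k \leq r$; the implication $(2) \Rightarrow (1)$ of lemma \ref{2cbb} then forces $c$ to be a coboundary, hence $[c] = 0$.

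For surjectivity, given any tuple $(c_{jk})_{1 \leq j < k \leq r} \in \mathbb{C}^{r(r-1)/2}$, I appeal to the final statement of lemma \ref{2chc}: a Chevalley-Eilenberg $2$-cocycle is entirely determined by a free choice of linear maps $c_j : V_j \oplus \mathbb{R} E_j \to \mathbb{C}$ and of constants $c_{jk}$. Setting all $c_j \equiv 0$ and taking the prescribed $c_{jk}$ produces a $2$-cocycle $c$ whose image under the map is exactly $(c_{jk})_{j < k}$. This establishes the isomorphism $H^2_{\CE}(\fs) \simeq \mathbb{C}^{r(r-1)/2}$.

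The "in particular" clause is then immediate: proposition \ref{classif} applied to $G_1 = \mathbb{S}$ identifies the $\mathbb{S}$-equivalence classes of $\mathbb{S}$-invariant star-products on $\mathbb{D} \simeq \mathbb{S}$ with the space of formal power series whose coefficients live in the second cohomology space of the $\mathbb{S}$-invariant de Rham complex, which for the simply transitive left action of $\mathbb{S}$ on itself coincides with $H^2_{\CE}(\fs)$ for the trivial representation. Combining this with the isomorphism just established yields the parametrization by $\mathbb{C}^{r(r-1)/2}\llbracket \nu \rrbracket$. There is no real obstacle here, since the combinatorial and cohomological heavy lifting has already been dispatched by lemmas \ref{2chc} and \ref{2cbb}; the only point requiring minor care is handling the diagonal and symmetric terms $c(H_j, H_j)$ and $c(H_k, H_j)$ via antisymmetry when invoking condition $(2)$ of lemma \ref{2cbb}.
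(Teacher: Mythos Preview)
Your argument is correct and follows exactly the route the paper indicates: the corollary is stated as ``direct from proposition \ref{classif} and lemmas \ref{2chc} and \ref{2cbb}'', and you have simply spelled out how well-definedness and injectivity come from the equivalence $(1)\Leftrightarrow(2)$ in lemma \ref{2cbb}, surjectivity from the free parameters $c_{jk}$ in lemma \ref{2chc}, and the parametrization from proposition \ref{classif} together with the identification of $\mathbb{S}$-invariant de Rham cohomology with $H^2_{\CE}(\fs)$ already recorded in the paragraph preceding the corollary.
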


We are now going to prove that all the $G$-invariant star-products on $\mathbb{D}$ belong to the same $\mathbb{S}$-equivalence class of $\mathbb{S}$-invariant star-products. For the case $r > 1$, this result is very important in order to certify that the natural approach via intertwiners we described above is not so naive and can really be efficient. In what follows, we will denote by $Z^2\left(\mathbb{D}\right)^G$ the set of $G$-invariant closed differential $2$-form on $\mathbb{D}$.

\begin{lem}\label{GclSex}
Every $G$-invariant closed differential $2$-form on $\mathbb{D}$ is the exterior derivative of a $\mathbb{S}$-invariant differential $1$-form on $\mathbb{D}$.
\end{lem}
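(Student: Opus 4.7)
The plan is to translate the problem into Chevalley--Eilenberg cohomology of the solvable algebra $\fs$ and conclude via Lemma~\ref{2cbb}. First I would observe that $\omega\in Z^2(\mathbb{D})^G$ is in particular $\mathbb{S}$-invariant. Since $\mathbb{S}$ acts simply transitively on $\mathbb{D}\simeq\mathbb{S}$ by left translations, $\omega$ is a left-invariant closed $2$-form on $\mathbb{S}$, determined by its value $c\in\Lambda^2\fs^\star$ at $e$, and de Rham closedness is equivalent to $c$ being a Chevalley--Eilenberg $2$-cocycle for the trivial representation of $\fs$. Conversely, producing a left-$\mathbb{S}$-invariant primitive of $\omega$ amounts to finding $\alpha\in\fs^\star$ with $\delta\alpha=c$, so the lemma reduces to the statement that every cocycle $c$ coming from a $G$-invariant form is a Chevalley--Eilenberg coboundary.

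By Lemma~\ref{2cbb} it is then enough to verify that $c$ vanishes on $\fa\times\fa$. For this I would exploit the stronger $K$-invariance of $\omega$. The base point $o\in\mathbb{D}$ is stabilised by $K$, so $K$ acts linearly on $T_o\mathbb{D}\simeq\fp$ via its adjoint action on $\fp$, and $\omega_o\in\Lambda^2\fp^\star$ is $\Ad(K)$-invariant. The two natural identifications $\fs\simeq T_o\mathbb{D}\simeq\fp$ agree up to a common sign on the subspace $\fa\subset\fs\cap\fp$ (both being induced by the fundamental vector field map $H\mapsto H^\star_o$), so the signs cancel in the bilinear form and $c|_{\fa\times\fa}$ coincides with $\omega_o|_{\fa\times\fa}$. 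This restriction is invariant under the restricted Weyl group $W:=N_K(\fa)/Z_K(\fa)$.

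The conclusion then follows from the vanishing $(\Lambda^2\fa^\star)^W=0$. I would prove it using the Harish-Chandra strongly orthogonal roots $\gamma_1,\ldots,\gamma_r\in\Sigma^+$ underlying the Pyatetskii-Shapiro decomposition: the basis $H_1,\ldots,H_r$ of $\fa$ from Lemma~\ref{psds} is, up to rescaling, the family of coroots $H_{\gamma_j}$. The reflections $s_{\gamma_j}\in W$ then act on $\fa$, by strong orthogonality, as $H_j\mapsto -H_j$ and $H_k\mapsto H_k$ for $k\neq j$. Applying $s_{\gamma_j}$-invariance of $\omega_o$ to the pair $(H_j,H_k)$ forces $c(H_j,H_k)=\omega_o(H_j,H_k)=0$ for every $1\leq j,k\leq r$, which is the desired vanishing.

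The main obstacle is this last step: matching the Pyatetskii-Shapiro generators $H_j$ of Lemma~\ref{psds} with the coroots of a strongly orthogonal system in $\Sigma^+$, so that the required reflections in $W$ can be read off directly. This is a structural input on Hermitian symmetric spaces of non-compact type, whose irreducible factors have restricted root system of type $C_{r_i}$ or $BC_{r_i}$ and whose Weyl group accordingly contains the full collection of sign changes on the basis $(H_j)$. Once this is granted, the rest of the argument is essentially formal in view of Lemmas~\ref{psds} and \ref{2cbb}.
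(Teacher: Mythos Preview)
Your reduction to Chevalley--Eilenberg cohomology of $\fs$ and the target statement ``$c$ vanishes on $\fa\times\fa$, then apply Lemma~\ref{2cbb}'' coincide exactly with the paper's strategy. The genuine difference lies in how you obtain the vanishing on $\fa\times\fa$. You invoke the restricted Weyl group: the $K$-invariance of $\omega_o$ gives $W$-invariance of $c|_{\fa\times\fa}$, and you then use that for Hermitian symmetric spaces the restricted root system is of type $C_r$ or $BC_r$, so that $W$ contains all sign changes on the strongly orthogonal coroot basis identified with the $H_j$ of Lemma~\ref{psds}, forcing $(\Lambda^2\fa^\star)^W=0$. This is correct, but it imports nontrivial structural input (the classification of restricted root systems for Hermitian symmetric spaces and the matching of the Pyatetskii--Shapiro generators with coroots of a strongly orthogonal family).

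The paper reaches the same vanishing by a short direct computation that avoids the Weyl group entirely. It writes the infinitesimal $K$-invariance of $c$ as $c\!\left([[Z,X]]_\fs,Y\right)+c\!\left(X,[[Z,Y]]_\fs\right)=0$ for $Z\in\fk$, takes $Z=X+\sigma(X)\in\fk$ with $X\in\fg_\lambda$, and uses the identity $[X,\sigma(X)]=\beta(X,\sigma(X))H_\lambda$ (equation~\eqref{1}) together with $[H_\mu,X+\sigma(X)]=\lambda(H_\mu)(X-\sigma(X))$ to obtain $\beta(X,\sigma(X))\,c(H_\lambda,H_\mu)=c(X,-2\lambda(H_\mu)X)=0$. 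Since the $H_\lambda$ span $\fa$, this finishes. The paper's route is thus more elementary and self-contained, staying entirely within the root-space calculus already set up in Section~2; your route is conceptually clean but relies on external structure theory that the paper has not developed.
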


\begin{proof} It is well known that the data of any $G$-invariant closed differential $2$-form on $\mathbb{S} \simeq \mathbb{D}$ is given by its evaluation at the base point $\Id \in \mathbb{S}$ which defines a Chevalley-Eilenberg $2$-cocycle for the trivial representation of $\fs$ on $\mathbb{C}$. In particular, for such an arbitrary $2$-cocycle $c$, the proof will be complete if we show that $c$ is a Chevalley-Eilenberg $2$-coboundary. Let's notice that our $G$-invariance hypothesis yields
\begin{eqnarray} \nonumber
c\left(\left[\Ad_k\left(X\right)\right]_{\fs}, \left[\Ad_k\left(Y\right)\right]_{\fs}\right) = c\left({\left(\tau_k\right)_\star}_{\Id}\left(X\right), {\left(\tau_k\right)_\star}_{\Id}\left(Y\right)\right) = c\left(X, Y\right)
\end{eqnarray} 
for each $k \in K$ and $X, Y \in \fs$. The infinitesimal version of this last relation can be written
\begin{eqnarray} \label{infinvform}
c\left(\left[\left[Z, X\right]\right]_{\fs}, Y\right) + c\left(X, \left[\left[Z, Y\right]\right]_{\fs}\right) = 0
\end{eqnarray} 
for each $Z \in \fk$ and $X, Y \in \fs$. Let's now consider $\lambda, \mu \in \Sigma^+$ and $X \in \fg_\lambda$. Since $X + \sigma\left(X\right) \in \fk$, we can use (\ref{1}) and (\ref{infinvform}) in order to obtain
\begin{eqnarray}
\nonumber \beta\left(X, \sigma\left(X\right)\right) \,c\left(H_\lambda, H_\mu\right) \,\,\,\,=\,\,\,\, c\left(\left[X, X + \sigma\left(X\right)\right], H_\mu\right) &=& c\left(X, \left[\left[X + \sigma\left(X\right), H_\mu\right]\right]_{\fs}\right) \\ \nonumber &=& c\left(X, \left[\lambda\left(H_\mu\right) \left(\sigma\left(X\right) - X\right)\right]_{\fs}\right) \\ \nonumber &=& c\left(X, -2\text{\hspace{0.2 mm}}\lambda\left(H_\mu\right) X\right) \,\,\,=\,\,\, 0.
\end{eqnarray} 
Therefore, the $2$-cocycle $c$ vanishes on $\fa \times \fa$ and lemma \ref{2cbb} allows us to conclude the proof.
\end{proof} 

\begin{rem}\label{alpha} 
From the previous proof, the evaluation operator of differential forms on $\mathbb{D}$ at the base point $\Id \in \mathbb{S} \simeq \mathbb{D}$ induces a linear isomorphism between $Z^2\left(\mathbb{D}\right)^G$ and the space of linear maps $\alpha : \fn = \left[\fs, \fs\right] \rightarrow \mathbb{C}$ satisfying 
\begin{eqnarray} \label{c}
\alpha\left(\left[\left[\Ad_k\left(X\right)\right]_{\fs}, \left[\Ad_k\left(Y\right)\right]_{\fs}\right]\right) = \alpha\left(\left[X, Y\right]\right)
\end{eqnarray} 
for each $k \in K$ and $X, Y \in \fs$. In particular, for $Z \in \fm \subset \fk$ and $X, Y \in \fs$, such a linear map $\alpha : \fn \rightarrow \mathbb{C}$ has to satisfy the relation $$\alpha\left(\left[Z, \left[X, Y\right]\right]\right) = \alpha\left(\left[\left[Z, X\right], Y\right]\right) + \alpha\left(\left[X, \left[Z, Y\right]\right]\right) = 0,$$ given that $\fm \subset N\left(\fn\right)$ and $\left[\fm, \fa\right] = 0$. As a consequence, we deduce from condition (\ref{c}) and lemma \ref{m} that such a map $\alpha$ has to vanish on every root space $\fg_\lambda$ with $\lambda \in \Sigma^+$ and $\dim\left(\fg_\lambda\right) > 1$.
\end{rem}

\noindent As a K\"ahlerian manifold, the domain $\mathbb{D}$ is naturally endowed with a $G$-invariant symplectic connection $\nabla$. Then, given an arbitrary formal power series\, $\omega_\nu \in Z^2\left(\mathbb{D}\right)^G\llbracket\nu\rrbracket$, the Fedosov's construction of star-products provides us with a $G$-invariant star-product $\ast^{\left(\nabla, \,\omega_\nu\right)}_{\nu}$ on $\mathbb{D}$; \cite{Fe94}. In addition, every $G$-invariant star-product on $\mathbb{D}$ is $G$-equivalent to a Fedosov star-product of this form; \cite[\S\,4, prop.\,4.1]{B+98}. If $G_1$ is a Lie subgroup of $G$, for each $\omega_\nu, \eta_\nu \in Z^2\left(\mathbb{D}\right)^G\llbracket\nu\rrbracket$, the $G$-invariant star-products $\ast^{\left(\nabla, \,\omega_\nu\right)}_{\nu}$ and $\ast^{\left(\nabla, \,\eta_\nu\right)}_{\nu}$ are $G_1$-equivalent if and only if $\omega_\nu - \eta_\nu$ is a formal power series in $\nu$ which coefficients are exterior derivative of $G_1$-invariant differential $1$-forms on $\mathbb{D}$; \cite[\S\,3, thm.\,3.1 \& thm.\,3.2]{B+98}. As a consequence, we get the following proposition from lemma \ref{GclSex}.

\begin{prop}\label{fed0}
Every $G$-invariant star-product on $\mathbb{D}$ is \,$\mathbb{S}$-equivalent to the Fedosov star-product $\ast^{\left(\nabla, 0\right)}_{\nu}$.
\end{prop}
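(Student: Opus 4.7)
The plan is to use the three ingredients assembled in the paragraph preceding the proposition together with Lemma~\ref{GclSex}, and to chain them by transitivity of $\mathbb{S}$-equivalence. Let $\ast_\nu$ be an arbitrary $G$-invariant star-product on $\mathbb{D}$. By \cite[\S\,4, prop.\,4.1]{B+98} cited just above the proposition, there exists $\omega_\nu \in Z^2\left(\mathbb{D}\right)^G\llbracket\nu\rrbracket$ such that $\ast_\nu$ is $G$-equivalent, hence a fortiori $\mathbb{S}$-equivalent, to the Fedosov star-product $\ast^{\left(\nabla,\,\omega_\nu\right)}_{\nu}$. It therefore suffices to show that $\ast^{\left(\nabla,\,\omega_\nu\right)}_{\nu}$ is $\mathbb{S}$-equivalent to $\ast^{\left(\nabla,\,0\right)}_{\nu}$.

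To do this, I would apply the $\mathbb{S}$-equivalence criterion of \cite[\S\,3, thm.\,3.1 \& thm.\,3.2]{B+98} with $G_1 = \mathbb{S}$: the two Fedosov star-products are $\mathbb{S}$-equivalent if and only if the difference $\omega_\nu - 0 = \omega_\nu$ is a formal power series whose coefficients are each the exterior derivative of an $\mathbb{S}$-invariant differential $1$-form on $\mathbb{D}$. Writing $\omega_\nu = \sum_{k \geq 0} \nu^k \omega_k$, each $\omega_k$ is by construction a $G$-invariant closed differential $2$-form on $\mathbb{D}$. Lemma~\ref{GclSex} then applies coefficient-wise and produces, for every $k$, an $\mathbb{S}$-invariant $1$-form $\alpha_k$ with $d\alpha_k = \omega_k$. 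The formal $1$-form $\alpha_\nu := \sum_{k \geq 0} \nu^k \alpha_k$ satisfies $d\alpha_\nu = \omega_\nu$, so the criterion is met and $\ast^{\left(\nabla,\,\omega_\nu\right)}_{\nu}$ is $\mathbb{S}$-equivalent to $\ast^{\left(\nabla,\,0\right)}_{\nu}$.

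By transitivity of $\mathbb{S}$-equivalence, $\ast_\nu$ is itself $\mathbb{S}$-equivalent to $\ast^{\left(\nabla,\,0\right)}_{\nu}$, which concludes the proof. There is no genuine obstacle here once Lemma~\ref{GclSex} is in hand: the entire content of the proposition is that $G$-invariance of a closed $2$-form is already enough to make it $d$ of an $\mathbb{S}$-invariant primitive, so the quotient $Z^2\left(\mathbb{D}\right)^G / d\Omega^1\left(\mathbb{D}\right)^{\mathbb{S}}$ vanishes; the rest is the standard Fedosov/Bertelson--Cahen--Gutt dictionary. The only point where one must be a little careful is to apply Lemma~\ref{GclSex} separately to each coefficient $\omega_k$ rather than to the formal series as a whole, but this is harmless since the equivalence criterion is itself stated coefficient-wise.
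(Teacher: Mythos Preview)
Your proof is correct and follows essentially the same route as the paper: the paper does not give a separate proof block for this proposition but derives it in the preceding paragraph by combining \cite[\S\,4, prop.\,4.1]{B+98}, the $G_1$-equivalence criterion \cite[\S\,3, thm.\,3.1 \& 3.2]{B+98} with $G_1=\mathbb{S}$, and Lemma~\ref{GclSex} applied coefficient-wise, exactly as you do. Your write-up simply makes explicit the transitivity step and the coefficient-wise application of Lemma~\ref{GclSex}.
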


\noindent Among the set of $\mathbb{S}$-invariant star-products explicitly described in the work of Bieliavsky and Gayral \cite{BG15}, let's choose a star-product $\ast^0_\nu$ which is $\mathbb{S}$-equivalent to the Fedosov star-product $\ast^{\left(\nabla, 0\right)}_{\nu}$. Then, the previous proposition allows us to refine a method for realizing the space of $G$-invariant star-products on $\mathbb{D}$ as the description of the set of operators $T \in \Op^{\mathbb{S}}\left(\ast^0_\nu\right)$ such that $T\left(\ast^0_\nu\right)$ is $G$-invariant. Let's point out that it is enough to intertwine the only one initial $\mathbb{S}$-invariant star-product $\ast^0_\nu$ in this way for obtaining the full set of $G$-invariant star-products on $\mathbb{D}$.

Let's now conclude this section by some considerations on the classification of the $G$-invariant star-products on $\mathbb{D}$.

\begin{lem}\label{Gequiv}
The $G$-equivalence classes of $G$-invariant star-products on $\mathbb{D}$ are parametrized by the space $Z^2\left(\mathbb{D}\right)^G\llbracket\nu\rrbracket$.
\end{lem}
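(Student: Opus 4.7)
The plan is to invoke the Fedosov-type results already recalled in the paragraph preceding Proposition~\ref{fed0}, and combine them with a standard symmetric space computation. Namely, every $G$-invariant star-product on $\mathbb{D}$ is $G$-equivalent to some Fedosov star-product $\ast^{(\nabla,\omega_\nu)}_\nu$ with $\omega_\nu \in Z^2(\mathbb{D})^G\llbracket\nu\rrbracket$, and two such Fedosov star-products $\ast^{(\nabla,\omega_\nu)}_\nu$ and $\ast^{(\nabla,\eta_\nu)}_\nu$ are $G$-equivalent if and only if $\omega_\nu-\eta_\nu$ is a formal power series in $\nu$ whose coefficients are exterior derivatives of $G$-invariant $1$-forms on $\mathbb{D}$. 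Consequently, if one shows that every $G$-invariant $1$-form on $\mathbb{D}$ is closed, then the $G$-equivalence forces $\omega_\nu=\eta_\nu$, and the assignment $\omega_\nu \mapsto [\ast^{(\nabla,\omega_\nu)}_\nu]_G$ becomes a bijection between $Z^2(\mathbb{D})^G\llbracket\nu\rrbracket$ and the set of $G$-equivalence classes.

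The remaining step is then to prove that $d\alpha=0$ for every $G$-invariant $1$-form $\alpha \in \Omega^1(\mathbb{D})^G$. Using the identification $\mathbb{D} \simeq G/K$ and the Cartan decomposition $\fg = \fk \oplus \fp$, such an $\alpha$ is entirely determined by its value at the base point $o \in \mathbb{D}$, which is an $\Ad_K$-invariant element $\alpha_o \in T_o^*\mathbb{D} \simeq \fp^*$. Pulling back along the projection $\pi : G \to G/K$, the form $\pi^*\alpha$ is a left-invariant $1$-form on $G$ that identifies with the linear form $\widetilde{\alpha} \in \fg^*$ extending $\alpha_o$ by zero on $\fk$. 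The Maurer-Cartan equation then gives $(d\widetilde{\alpha})(X,Y) = -\widetilde{\alpha}([X,Y])$ for all $X,Y \in \fg$. For $X,Y \in \fp$, the symmetric-space relation $[\fp,\fp] \subset \fk$ implies $\widetilde{\alpha}([X,Y]) = 0$, so $(d\widetilde{\alpha})(X,Y) = 0$. Since $\pi_*$ sends $\fp$ onto $T_o\mathbb{D}$, this yields $(d\alpha)_o = 0$, and the $G$-invariance of $d\alpha$ then propagates this vanishing to all of $\mathbb{D}$.

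Putting the two steps together gives the parametrization claimed in the statement. The only substantive ingredient is the symmetric-space identity $[\fp,\fp] \subset \fk$ used to compute $d\alpha$, so there is no serious obstacle beyond carefully invoking the Fedosov characterization of $G$-invariant star-products and the algebraic reduction to the Cartan decomposition. One could equivalently deduce the same conclusion from Proposition~\ref{classif} applied with $G_1 = G$, by computing the second cohomology of the $G$-invariant de Rham complex of $\mathbb{D}$: the same argument shows that the image of $d$ on $G$-invariant $1$-forms is trivial, so this cohomology equals $Z^2(\mathbb{D})^G$ itself.
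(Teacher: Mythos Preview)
Your proof is correct and reaches the same conclusion as the paper, but by a different and more economical route. The paper works with the Iwasawa decomposition $\fg=\fs\oplus\fk$: using the root-space identity~(\ref{1}) it first shows that $\fs=\{[[X,Y]]_{\fs}:X\in\fs,\ Y\in\fk\}$, and then deduces from the infinitesimal $K$-invariance condition $\alpha([[X,Y]]_{\fs})=0$ that every $G$-invariant $1$-form on $\mathbb{D}$ is \emph{identically zero}. You instead work with the Cartan decomposition $\fg=\fk\oplus\fp$ and use only the symmetric-space relation $[\fp,\fp]\subset\fk$ to show that every $G$-invariant $1$-form is \emph{closed}. Both routes yield that $d$ is the zero map on $\Omega^1(\mathbb{D})^G$, which is exactly what the Fedosov/Proposition~\ref{classif} argument needs. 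Your computation is shorter and uses no root-space calculus; the paper's computation establishes the stronger intermediate fact $\Omega^1(\mathbb{D})^G=\{0\}$, at the cost of an explicit root-space manipulation.
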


\begin{proof} For each $\lambda \in \Sigma^+$ and $X \in \fg_\lambda \backslash\left\{0\right\}$, we have $X + \sigma\left(X\right) \in \fk$ and the properties of the root space decomposition of $\fg$ give \begin{eqnarray} \nonumber
&& X \,\,=\,\, \left[X - \left(\frac{X + \sigma\left(X\right)}{2}\right)\right]_{\fs} \,\,=\,\, \left[\frac{X - \sigma\left(X\right)}{2}\right]_{\fs} \,\,=\,\, \left(\frac{1}{\lambda\left(H_\lambda\right)}\right) \left[\left[H_\lambda, \frac{X + \sigma\left(X\right)}{2}\right]\right]_{\fs}\\ \nonumber &\text{and}& H_\lambda \,\,=\,\, \left(\frac{1}{\beta\left(X, \sigma\left(X\right)\right)}\right) \left[X, \sigma\left(X\right)\right] \,\,=\,\, \left(\frac{1}{\beta\left(X, \sigma\left(X\right)\right)}\right) \left[\left[X, X + \sigma\left(X\right)\right]\right]_{\fs}.
\end{eqnarray}
Therefore, we obtain the relation $\fs = \left\{\left[\left[X, Y\right]\right]_{\fs} : X \in \fs,\, Y \in \fk\right\}$. A similar argument as the one used for the proof of lemma \ref{GclSex} shows that a $G$-invariant differential $1$-form on $\mathbb{D}$ is completely determine by a linear map $\alpha : \fs \rightarrow \mathbb{C}$ which satisfies $\alpha\left(\left[\left[X, Y\right]\right]_{\fs}\right) = 0$ for each $X \in \fs$ and $Y \in \fk$. As a consequence, such a $G$-invariant $1$-form is identically zero and the proof is then complete in view of proposition \ref{classif}. 
\end{proof}

\noindent We can notice that there exist $G$-invariant star-products on $\mathbb{D}$ that are not $G$-equivalent since $\omega$ is a $G$-invariant symplectic form on $\mathbb{D}$. Nevertheless, it happens that $\omega$ is the only one generator of the space $Z^2\left(\mathbb{D}\right)^G$. In fact, let's assume that the domain $\mathbb{D}$ is an irreducible Hermitian symmetric space of non compact type as introduced in the reference \cite[Ch.\,8, \S\,5]{He01}. In the notations of the previous section, this hypothesis implies that the adjoint action of $K$ on $\fp$ is irreducible. If we denote by $\iota$ the vector space isomorphism $\fp \simeq \fs$ which associates $\left[X\right]_\mathfrak{s}$ with $X \in \fp$, it follows that the map 
\begin{eqnarray} \label{action}
K \times \fs \rightarrow \fs : \left(k, X\right) \mapsto \left(\iota \circ \Ad_k \circ\, \iota^{-1}\right)\left(X\right) = \left[\Ad_k\left(X\right)\right]_{\fs}
\end{eqnarray}
defines a irreducible action of $K$ on $\fs$. 
Let's now consider $\upsilon$ a $G$-invariant closed differential $2$-form on $\mathbb{D}$ and the linear map $\alpha : \fn \rightarrow \mathbb{C}$ which is associated with $\upsilon$ via remark \ref{alpha}. As the map $\alpha$ satisfies the equality (\ref{c}) for each $k \in K$, the set $E_{\alpha} := \left\{X \in \fs : \alpha \circ \ad_X = 0\right\}$ defines an invariant subspace of $\fs$ for the action (\ref{action}). Since this action is irreducible, we conclude that either $E_{\alpha} = \left\{0\right\}$ or $E_{\alpha} = \fs$.
Given that $\upsilon$ is non-degenerate if and only if $E_{\alpha} = \left\{0\right\}$, then either $\upsilon$ is a $G$-invariant symplectic form on $\mathbb{D}$ or $\upsilon = 0$.\break As a consequence, the structure theory of irreducible Hermitian symmetric spaces of non compact type allows us to deduce the existence of a constant $c_\alpha \in \mathbb{C}$ and a non-zero element $Z_0$ generating the center of the Lie algebra $\fk$ such that $$\alpha\left(\left[X, Y\right]\right) = c_\alpha \,\beta\left(Z_0, \left[\iota^{-1}\left(X\right), \iota^{-1}\left(Y\right)\right]\right)$$ for all $X, Y \in \fs$; \cite[\S\,1, prop.\,1.1]{BM01}. We then obtain the following proposition.

\begin{prop}\label{irr}
As a Hermitian symmetric space of non compact type, if the symmetric bounded domain $\mathbb{D}$ is irreducible, then the symplectic form $\omega$ generates completely the space $Z^2\left(\mathbb{D}\right)^G$. In this case, the $G$-equivalence classes of $G$-invariant star-products on $\mathbb{D}$ are parametrized by $\mathbb{C}\llbracket\nu\rrbracket$.
\end{prop}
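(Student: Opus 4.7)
The plan is to complete the argument sketched in the paragraph immediately preceding the statement. For an arbitrary $\upsilon \in Z^2(\mathbb{D})^G$, remark \ref{alpha} associates a linear map $\alpha : \fn \to \mathbb{C}$, and the discussion has already shown that irreducibility of the $K$-action (\ref{action}) on $\fs$ forces the $K$-invariant subspace $E_\alpha = \{X \in \fs : \alpha \circ \ad_X = 0\}$ to be either $0$ or all of $\fs$. In the first case, $\upsilon$ is a $G$-invariant symplectic form; in the second, $\upsilon = 0$. When $\upsilon$ is symplectic, the cited structure theory of irreducible Hermitian symmetric spaces of non compact type from \cite[\S\,1, prop.\,1.1]{BM01} produces a single scalar $c_\alpha \in \mathbb{C}$ with $\alpha([X,Y]) = c_\alpha\,\beta(Z_0, [\iota^{-1}(X), \iota^{-1}(Y)])$ for all $X,Y \in \fs$.

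First I would extract from this the assignment $\upsilon \mapsto c_\alpha$ (with $c_\alpha := 0$ in the degenerate case) and observe that it is a well-defined linear map $Z^2(\mathbb{D})^G \to \mathbb{C}$, whose vanishing forces $E_\alpha = \fs$ and hence $\upsilon = 0$. This gives injectivity. Since $\omega \in Z^2(\mathbb{D})^G$ is a non-zero $G$-invariant symplectic form, it corresponds under this map to a non-zero constant, so the image is all of $\mathbb{C}$ and $\omega$ generates $Z^2(\mathbb{D})^G$. This proves the first assertion of the proposition.

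The second assertion then follows formally: lemma \ref{Gequiv} parametrizes the $G$-equivalence classes of $G$-invariant star-products on $\mathbb{D}$ by $Z^2(\mathbb{D})^G\llbracket\nu\rrbracket$, and substituting the identification $Z^2(\mathbb{D})^G \simeq \mathbb{C}$ just obtained yields the parametrization by $\mathbb{C}\llbracket\nu\rrbracket$. There is no serious obstacle beyond what was already handled in the preceding paragraph; the proof reduces to packaging the Schur-type dichotomy for $E_\alpha$ with the structural rigidity result of \cite{BM01} and then feeding the conclusion into lemma \ref{Gequiv}.
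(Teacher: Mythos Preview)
Your proposal is correct and follows essentially the same approach as the paper: the argument is exactly the one laid out in the paragraph preceding the proposition, combining the Schur-type dichotomy for $E_\alpha$ with the structural formula from \cite{BM01}, and then invoking lemma \ref{Gequiv}. Your only addition is to make explicit that $\upsilon \mapsto c_\alpha$ is a linear isomorphism $Z^2(\mathbb{D})^G \to \mathbb{C}$, which the paper leaves implicit.
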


\section{Derivations and quantum moment maps}

Let $\ast_\nu$ be an arbitrary star-product on $\mathbb{D}$. If $\left\{D_k : k \in \mathbb{N}\right\}$ is a sequence of $\mathbb{C}\llbracket\nu\rrbracket$-linear differential operators on $\mathcal{C}^\infty\left(\mathbb{D}\right)\llbracket\nu\rrbracket$ such that the operator 
\begin{eqnarray}\nonumber
D = \sum_{k \in \mathbb{N}} \nu^k D_k \text{ \,\,satisfies\,\, } D\left(f_1 \ast_\nu f_2\right) \,=\, D\left(f_1\right) \ast_\nu f_2 \,+\, f_1 \ast_\nu D\left(f_2\right)
\end{eqnarray}
for each $f_1, f_2 \in \mathcal{C}^\infty\left(\mathbb{D}\right)\llbracket\nu\rrbracket$, then we say that $D$ is a \emph{derivation of $\ast_\nu$}. We define $\Der\left(\ast_\nu\right)$ to be the set of derivations of $\ast_\nu$. 

\begin{rem}\label{10} 
Let $G_1$ be a connected Lie subgroup of $G$ with Lie algebra $\fg_1$. The star-product $\ast_\nu$ satisfies condition (\ref{4}) for each $g \in G_1$ if and only if $X^\star \in \Der\left(\ast_\nu\right)$ for each $X \in \fg_1$. In particular, this last relation is the infinitesimal version of the $G_1$-invariance condition for $\ast_\nu$.
\end{rem}

\noindent As the domain $\mathbb{D}$ is a connected simply connected symplectic manifold, the derivations of $\ast_\nu$ satisfy the following lemma based on the result \cite[\S\,6, lem.\,6.1]{GR03}.

\begin{lem}\label{intder} 
For each $D \in \Der\left(\ast_\nu\right)$, there exists $f_D \in \mathcal{C}^\infty\left(\mathbb{D}\right)\llbracket\nu\rrbracket$ such that $$D = \frac{1}{2 \nu} \left[f_D, -\right]_{\ast_\nu} : f \in \mathcal{C}^\infty\left(\mathbb{D}\right)\llbracket\nu\rrbracket \,\longmapsto\, \frac{1}{2 \nu} \left[f_D, f\right]_{\ast_\nu} := \frac{1}{2 \nu} \left(f_D \ast_\nu f - f \ast_\nu f_D\right).$$ In addition, for all $f \in \mathcal{C}^\infty\left(\mathbb{D}\right)\llbracket\nu\rrbracket$, the operator $D_f := \frac{1}{2 \nu} \left[f, -\right]_{\ast_\nu}$ defines a derivation of $\ast_\nu$ and we have $D_f = 0$ if and only if $f \in \mathbb{C}\llbracket\nu\rrbracket$. 
\end{lem}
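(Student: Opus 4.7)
The lemma comprises three claims: (a) $D_f$ is a derivation of $\ast_\nu$ for every $f \in \mathcal{C}^\infty(\mathbb{D})\llbracket\nu\rrbracket$; (b) $D_f = 0$ if and only if $f \in \mathbb{C}\llbracket\nu\rrbracket$; (c) every $D \in \Der(\ast_\nu)$ is of this inner form. The plan is to dispatch (a) and (b) as formal consequences of the star-product axioms and to devote the bulk of the argument to (c), which essentially reproduces the strategy of \cite[\S\,6, lem.\,6.1]{GR03} and relies crucially on the simple connectedness of $\mathbb{D}$.

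For (a), the plan is to apply associativity of $\ast_\nu$ to rewrite $f \ast_\nu (g \ast_\nu h) - (g \ast_\nu h) \ast_\nu f$ as $[f, g]_{\ast_\nu} \ast_\nu h + g \ast_\nu [f, h]_{\ast_\nu}$, which is precisely the Leibniz rule for $\frac{1}{2\nu}[f, -]_{\ast_\nu}$. For (b), the axioms $C_l(1, \cdot) = C_l(\cdot, 1) = 0$ for $l \geq 1$ combined with the $\mathbb{C}$-bilinearity of each $C_l$ force scalars to be central for $\ast_\nu$, settling the easy direction. Conversely, if $D_f = 0$ with $f = f_0 + \nu f_1 + \cdots$, the lowest non-trivial order of $\frac{1}{2\nu}[f, g]_{\ast_\nu}$ reads $\{f_0, g\} = 0$ for every $g \in \mathcal{C}^\infty(\mathbb{D})$; non-degeneracy of $\omega$ and connectedness of $\mathbb{D}$ then force $f_0 \in \mathbb{C}$, after which subtracting $f_0$ (which does not affect $D_f$) and factoring out $\nu$ permits iterating the same argument on the remainder.

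For (c), the substantive point, I would argue by induction on the $\nu$-degree. Writing $D = D_0 + \nu D_1 + \cdots$, the order $\nu^0$ part of the derivation identity yields $D_0(fg) = D_0(f)\,g + f\,D_0(g)$, so $D_0$ is a vector field on $\mathbb{D}$. Antisymmetrizing the order $\nu^1$ identity and using $C_1(f_1, f_2) - C_1(f_2, f_1) = 2\{f_1, f_2\}$ then gives $D_0(\{f_1, f_2\}) = \{D_0 f_1, f_2\} + \{f_1, D_0 f_2\}$, which on $(\mathbb{D}, \omega)$ is equivalent to $\mathcal{L}_{D_0}\omega = 0$. The only non-algebraic step, and the one I expect to be the main obstacle, enters here: since $\mathbb{D}$ is simply connected, $H^1_{\mathrm{dR}}(\mathbb{D}) = 0$, so the closed $1$-form $\iota_{D_0}\omega$ is exact, and writing $\iota_{D_0}\omega = -df_0$ yields $D_0 = \{f_0, -\}$.

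To close the induction, I would form $D' := D - \frac{1}{2\nu}[f_0, -]_{\ast_\nu}$, which is again a derivation and whose $\nu^0$-component vanishes by construction. Hence $D' = \nu \tilde{D}$ for some $\tilde{D}$, and the factor of $\nu$ divides out cleanly in the Leibniz identity so that $\tilde{D}$ is still a derivation of $\ast_\nu$ (each $\tilde{D}_k = D'_{k+1}$ remaining differential). Applying the previous step to $\tilde{D}$ produces $f_1 \in \mathcal{C}^\infty(\mathbb{D})$ with $\tilde{D}_0 = \{f_1, -\}$; iterating and assembling the series $f_D := \sum_{k \geq 0} \nu^k f_k$ then gives $D = \frac{1}{2\nu}[f_D, -]_{\ast_\nu}$, completing the proof.
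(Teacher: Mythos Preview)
Your argument is correct and is precisely the standard proof of this fact. Note, however, that the paper does not actually supply a proof of this lemma: it is stated with the attribution ``based on the result \cite[\S\,6, lem.\,6.1]{GR03}'' and then used as a black box. Your write-up therefore reconstructs the argument the paper delegates to that reference, and it does so faithfully---the key ingredients (order-by-order reduction, the identification of $D_0$ as a symplectic vector field via the antisymmetrised $\nu^1$ identity, and the use of $H^1_{\mathrm{dR}}(\mathbb{D})=0$ from simple connectedness to pass from symplectic to Hamiltonian) are exactly the ones underlying the cited result.
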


\noindent For all $f_1, f_2 \in \mathcal{C}^\infty\left(\mathbb{D}\right)\llbracket\nu\rrbracket$, it is standard to notice that 
\begin{eqnarray}\label{compoder}
\left[\frac{1}{2 \nu} \left[f_1, -\right]_{\ast_\nu},  \frac{1}{2 \nu} \left[f_2, -\right]_{\ast_\nu}\right] =  \frac{1}{2 \nu} \left[ \frac{1}{2 \nu} \left[f_1, f_2\right]_{\ast_\nu}, -\right]_{\ast_\nu}.
\end{eqnarray}

The following result is inspired by \cite[\S\,6]{Xu98} and \cite[Ch.\,2, \S\,3]{Ko14}. 

\begin{lem}\label{5}
Let $D : \fg \rightarrow \Der\left(\ast_\nu\right) : X \mapsto D_X$ be a Lie algebra homomorphism. Then, there exists a unique linear map $$\Lambda : \fg \rightarrow \mathcal{C}^\infty\left(\mathbb{D}\right)\llbracket\nu\rrbracket : X \mapsto \Lambda_X$$ such that 
\begin{eqnarray}\label{7}
D_X = \frac{1}{2 \nu} \left[\Lambda_X, -\right]_{\ast_\nu} \text{\,\, and \,\,\,} \Lambda_{\left[X, Y\right]} = \frac{1}{2 \nu} \left[\Lambda_X, \Lambda_Y\right]_{\ast_\nu}
\end{eqnarray}
for each $X, Y \in \fg$. In addition, the $0^{\text{th}}$ order term of $\Lambda$ in $\nu$ coincides with the moment map $\lambda$ if and only if the $0^{\text{th}}$ order term of $D_X$ in $\nu$ coincides with $X^\star$ for each $X \in \fg$.
\end{lem}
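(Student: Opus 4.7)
The plan is to build $\Lambda$ by first choosing an arbitrary linear lift of $D$ through lemma \ref{intder}, then correcting the resulting $2$-cocycle obstruction using the semisimplicity of $\fg$. Fixing a basis $\left\{X_i\right\}$ of $\fg$, lemma \ref{intder} yields $\tilde\Lambda_{X_i} \in \mathcal{C}^\infty\left(\mathbb{D}\right)\llbracket\nu\rrbracket$ satisfying $D_{X_i} = \frac{1}{2\nu}\left[\tilde\Lambda_{X_i}, -\right]_{\ast_\nu}$, and extending $\mathbb{R}$-linearly gives a linear map $\tilde\Lambda : \fg \rightarrow \mathcal{C}^\infty\left(\mathbb{D}\right)\llbracket\nu\rrbracket$ that trivially realises the first identity of (\ref{7}). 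The obstruction to the second identity is the quantity
$$c\left(X, Y\right) \,:=\, \tilde\Lambda_{\left[X, Y\right]} - \frac{1}{2\nu}\left[\tilde\Lambda_X, \tilde\Lambda_Y\right]_{\ast_\nu}.$$
Combining (\ref{compoder}) with the assumption that $D$ is a Lie algebra homomorphism, the derivation $\frac{1}{2\nu}\left[c\left(X, Y\right), -\right]_{\ast_\nu}$ vanishes identically, so lemma \ref{intder} forces $c\left(X, Y\right) \in \mathbb{C}\llbracket\nu\rrbracket$. Bilinearity and antisymmetry of $c$ are immediate; the Jacobi identity for the $\ast_\nu$-commutator (inherited from associativity of $\ast_\nu$), combined with the Jacobi identity on $\fg$ and the fact that $c\left(X, Y\right)$ is central for $\left[-,-\right]_{\ast_\nu}$, shows that $c$ is a Chevalley-Eilenberg $2$-cocycle for the trivial representation of $\fg$ on $\mathbb{C}\llbracket\nu\rrbracket$.

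Next, since $\fg$ is semisimple, Whitehead's second lemma gives $H^2_{\CE}\left(\fg, \mathbb{C}\right) = 0$; applying this order by order in $\nu$ produces a linear map $\alpha : \fg \rightarrow \mathbb{C}\llbracket\nu\rrbracket$ with $c\left(X, Y\right) = \alpha\left(\left[X, Y\right]\right)$. Setting $\Lambda_X := \tilde\Lambda_X - \alpha\left(X\right)$ preserves the first identity of (\ref{7}), because scalar formal series are central for the $\ast_\nu$-commutator, and annihilates the obstruction, so the second identity also holds. For uniqueness, if $\Lambda'$ is any other solution, lemma \ref{intder} forces $\beta\left(X\right) := \Lambda_X - \Lambda'_X$ to lie in $\mathbb{C}\llbracket\nu\rrbracket$ for every $X$; the centrality of constants makes the second identity of (\ref{7}) collapse to $\beta\left(\left[X, Y\right]\right) = 0$, and since $\fg = \left[\fg, \fg\right]$ by semisimplicity we conclude $\beta \equiv 0$.

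For the supplementary statement, the reverse implication is direct: if $\Lambda_X^{\left(0\right)} = \lambda_X$, then the relation $C_1\left(f_1, f_2\right) - C_1\left(f_2, f_1\right) = 2\left\{f_1, f_2\right\}$ forces the $0^{\text{th}}$ order term of $\frac{1}{2\nu}\left[\Lambda_X, -\right]_{\ast_\nu}$ to equal $\left\{\lambda_X, -\right\} = X^\star$. Conversely, if $D_X^{\left(0\right)} = X^\star$ for each $X \in \fg$, the same identity yields $\left\{\Lambda_X^{\left(0\right)} - \lambda_X, -\right\} = 0$, so $\gamma\left(X\right) := \Lambda_X^{\left(0\right)} - \lambda_X$ is a constant and $\gamma : \fg \rightarrow \mathbb{C}$ is linear. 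Reading the second identity of (\ref{7}) at order zero, together with the moment map relation $\left\{\lambda_X, \lambda_Y\right\} = \lambda_{\left[X, Y\right]}$, gives $\gamma\left(\left[X, Y\right]\right) = 0$, and semisimplicity of $\fg$ again yields $\gamma \equiv 0$.

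The main obstacle I expect is the cocycle verification for $c$ and the careful use of Whitehead's lemma uniformly in the formal parameter $\nu$; once these are in place, the semisimplicity of $\fg$ (already noted in the paper) is precisely the structural ingredient that powers both the coboundary trivialisation in the existence argument and the vanishing of the ambiguity $\beta$ (and of $\gamma$) in the uniqueness arguments.
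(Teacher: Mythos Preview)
Your proof is correct and follows essentially the same route as the paper: lift $D$ via lemma \ref{intder}, identify the defect as a $\mathbb{C}\llbracket\nu\rrbracket$-valued Chevalley--Eilenberg $2$-cocycle, kill it with Whitehead's lemma, and then use $\fg = [\fg,\fg]$ for uniqueness. The only cosmetic difference is in the final clause, where the paper invokes the uniqueness of the classical moment map $\lambda$ directly while you rerun the $\fg = [\fg,\fg]$ argument on $\gamma$; both are fine.
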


\begin{proof}
The existence of a linear map $\hat{\Lambda} : \fg \rightarrow \mathcal{C}^\infty\left(\mathbb{D}\right)\llbracket\nu\rrbracket : X \mapsto \hat{\Lambda}_X$ such that $$D_X = \frac{1}{2 \nu} \left[\hat{\Lambda}_X, -\right]_{\ast_\nu} \text{ \,for any\, } X \in \fg$$ is clear from the linearity of $D$ and lemma \ref{intder}. Let's consider such a map $\hat{\Lambda}$. As $D$ is a Lie algebra homomorphism, we deduce from relation (\ref{compoder}) and lemma \ref{intder} the existence of an antisymmetric bilinear map $c : \fg \times \fg \rightarrow \mathbb{C}\llbracket\nu\rrbracket$ such that $$\hat{\Lambda}_{[X, Y]} = \frac{1}{2 \nu} \left[\hat{\Lambda}_X, \hat{\Lambda}_Y\right]_{\ast_\nu} +\, c\left(X, Y\right)$$ for each $X, Y \in \fg$. The Jacobi identity allows us to remark that $c$ is a Chevalley-Eilenberg $2$-cocycle for the trivial representation of $\fg$ on $\mathbb{C}$. As the Lie algebra $\fg$ is semi-simple, an application of the Whitehead lemma provides us with a linear map $\alpha : \fg \rightarrow \mathbb{C}\llbracket\nu\rrbracket$ such that $$c\left(X, Y\right) = \alpha\left(\left[X, Y\right]\right) \text{ \,for all\, } X, Y \in \mathfrak{g}.$$ Then, the map $\Lambda := \hat{\Lambda} - \alpha$ satisfies (\ref{7}) for each $X, Y \in \fg$. This proves the existence result of this lemma. In order to prove the unicity of $\Lambda$, let's consider an arbitrary linear map $\Lambda^\prime : \fg \rightarrow \mathcal{C}^\infty\left(\mathbb{D}\right)\llbracket\nu\rrbracket$ such that the analog of conditions (\ref{7}) hold for all $X, Y \in \fg$. In view of lemma \ref{intder}, for each $X \in \fg$, we have $\Lambda_X - \Lambda^\prime_X \in \mathbb{C}\llbracket\nu\rrbracket$, and then
\begin{eqnarray}\label{12}
\Lambda_{\left[X, Y\right]} =\, \frac{1}{2 \nu} \left[\Lambda_X, \Lambda_Y\right]_{\ast_\nu} = \,\frac{1}{2 \nu} \left[\Lambda^\prime_X, \Lambda^\prime_Y\right]_{\ast_\nu} = \Lambda^\prime_{\left[X, Y\right]}
\end{eqnarray}
for all $X, Y \in \fg$. As the Lie algebra $\fg$ is semi-simple, it coincides with its derived algebra $\left[\fg, \fg\right]$ and we get $\Lambda = \Lambda^\prime$ from (\ref{12}). This leads us to the unicity of $\Lambda$. For each $X \in \fg$, let's define $\Lambda^0_X \in \mathcal{C}^\infty\left(\mathbb{D}\right)$ to be the $0^{\text{th}}$ order term of $\Lambda_X$ in $\nu$. From the definition of star-product on a symplectic manifold, we deduce  $$D_X = \left\{\Lambda^0_X, -\right\} + o\left(\nu\right) \text{ \,for all\, } X \in \fg.$$ As a consequence, if the $0^{\text{th}}$ order term of $D_X$ in $\nu$ coincides with $X^\star$ for each $X \in \fg$, the map $$\Lambda^0 : \fg \rightarrow \mathcal{C}^\infty\left(\mathbb{D}\right) : X \mapsto \Lambda^0_X$$ satisfies the same properties as the moment map $\lambda$ and the unicity of such a map implies $\Lambda^0 = \lambda$. Reciprocally, if $\Lambda^0 = \lambda$, then $D_X = \left\{\lambda_X, -\right\} + o\left(\nu\right) = X^\star + o\left(\nu\right)$ for each $X \in \fg$. The proof is complete.
\end{proof}

\noindent We notice that the semi-simplicity of $\fg$ is crucial in the previous statement.

\begin{df}
In the notations of lemma \ref{5}, such a linear map $\Lambda = \lambda + o\left(\nu\right)$ will be called \emph{quantum moment map} associated with $D$.
\end{df}

Similarly to the previous section, we denote by $Z^2\left(\mathbb{D}\right)^{\mathbb{S}}$ the set of $\mathbb{S}$-invariant closed differential $2$-form on $\mathbb{D} \simeq \mathbb{S}$. Let's recall the notation $\nabla$ for the $G$-invariant symplectic connection associated with the K\"ahlerian structure of $\mathbb{D}$. From proposition \ref{fed0}, we know that the $\mathbb{S}$-invariant star-products on $\mathbb{D}$ which are $\mathbb{S}$-equivalent to the Fedosov star-product $\ast^{\left(\nabla, 0\right)}_{\nu}$ play a particular role in our work. In the spirit of this section on quantum moment maps, we are now going to introduce an alternative definition of these star-products. 

\begin{prop}\label{pqmm}
The star-product $\ast_\nu$ is $\mathbb{S}$-invariant and $\mathbb{S}$-equivalent to the Fedosov star-product $\ast^{\left(\nabla, 0\right)}_{\nu}$\break if and only if there exists a linear map $\Lambda : \fs \rightarrow \mathcal{C}^\infty\left(\mathbb{D}\right)\llbracket\nu\rrbracket : X \mapsto \Lambda_X$ such that (\ref{7}) hold with $D_X = X^\star$ for each $X, Y \in \fs$.
\end{prop}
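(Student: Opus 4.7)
The plan is to treat both directions by transporting a canonical quantum moment map along $\mathbb{S}$-equivalences, with the reverse direction relying on identifying a Chevalley-Eilenberg obstruction class with the characteristic class of corollary \ref{H2S}.

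For $(\Rightarrow)$: the Fedosov star-product $\ast^{(\nabla,0)}_\nu$ is $G$-invariant, so by remark \ref{10} the assignment $X \in \fg \mapsto X^\star$ is a Lie algebra homomorphism into $\Der(\ast^{(\nabla,0)}_\nu)$. Since $\fg$ is semi-simple, lemma \ref{5} supplies a quantum moment map $\Lambda^{(0)}: \fg \to \mathcal{C}^\infty(\mathbb{D})\llbracket\nu\rrbracket$, and its restriction to $\fs \subset \fg$ is a linear map satisfying (\ref{7}) with $D_X = X^\star$ relative to $\ast^{(\nabla,0)}_\nu$. Now let $T$ be any $\mathbb{S}$-equivalence with $T(\ast_\nu) = \ast^{(\nabla,0)}_\nu$. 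Since each differential operator $T_k$ commutes with the pullback action $\tau_s^*$ for every $s \in \mathbb{S}$, differentiating at $s = e$ shows that $T_k$ commutes with each $X^\star$, $X \in \fs$; combined with the intertwining property of $T$, this implies that $\Lambda_X := T^{-1}(\Lambda^{(0)}_X)$, $X \in \fs$, satisfies (\ref{7}) with $D_X = X^\star$ relative to $\ast_\nu$.

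For $(\Leftarrow)$: existence of $\Lambda$ makes each $X^\star$, $X \in \fs$, a derivation of $\ast_\nu$, so by remark \ref{10} and connectedness of $\mathbb{S}$ (proposition \ref{iwa2}), $\ast_\nu$ is $\mathbb{S}$-invariant. For the $\mathbb{S}$-equivalence with $\ast^{(\nabla,0)}_\nu$, I would attach to every $\mathbb{S}$-invariant star-product $\ast_\nu$ a canonical obstruction class in $H^2_\CE(\fs)\llbracket\nu\rrbracket$ to admitting an $\fs$-quantum moment map: using lemma \ref{intder} pick a linear lift $\hat\Lambda: \fs \to \mathcal{C}^\infty(\mathbb{D})\llbracket\nu\rrbracket$ with $X^\star = \frac{1}{2\nu}[\hat\Lambda_X, -]_{\ast_\nu}$, and set
\[
c(X,Y) := \hat\Lambda_{[X,Y]} - \frac{1}{2\nu}[\hat\Lambda_X, \hat\Lambda_Y]_{\ast_\nu}.
\]
By (\ref{compoder}) and the second half of lemma \ref{intder}, $c$ takes values in $\mathbb{C}\llbracket\nu\rrbracket$, and the Jacobi identity for $\ast_\nu$ makes $c$ a Chevalley-Eilenberg $2$-cocycle. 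Routine verifications show that $[c] \in H^2_\CE(\fs)\llbracket\nu\rrbracket$ does not depend on the lift (two lifts differ by a linear map $\fs \to \mathbb{C}\llbracket\nu\rrbracket$, producing a coboundary) and is invariant under $\mathbb{S}$-equivalence (same transport argument as in $(\Rightarrow)$). Moreover $[c] = 0$ is equivalent to $\ast_\nu$ admitting an $\fs$-quantum moment map: if $\delta\alpha = c$ then $\Lambda := \hat\Lambda - \alpha$ works, and conversely taking $\hat\Lambda = \Lambda$ yields $c = 0$. Our hypothesis thus gives $[c] = 0$, and the same holds for $\ast^{(\nabla,0)}_\nu$ by the first direction.

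The main obstacle is to show that this obstruction $[c]$ matches the $\mathbb{S}$-equivalence characteristic class of corollary \ref{H2S}, equivalently that $[c] = 0$ forces $\mathbb{S}$-equivalence of $\ast_\nu$ with $\ast^{(\nabla,0)}_\nu$. Since the well-defined map $[\ast_\nu] \mapsto [c]$ from $\mathbb{S}$-equivalence classes to $H^2_\CE(\fs)\llbracket\nu\rrbracket$ has as codomain exactly the classifying set of corollary \ref{H2S}, only injectivity needs to be verified. I would attempt this either by an order-by-order Gerstenhaber-Hochschild obstruction argument in $\nu$, explicitly building an $\mathbb{S}$-equivalence to $\ast^{(\nabla,0)}_\nu$ whenever $[c]$ vanishes, or by matching $[c]$ with the invariant produced by the classification construction of \cite{B+98} under the natural identification of the $\mathbb{S}$-invariant de Rham complex on $\mathbb{D} \simeq \mathbb{S}$ with the Chevalley-Eilenberg complex of $\fs$ for trivial coefficients.
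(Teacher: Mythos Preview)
Your forward direction $(\Rightarrow)$ is correct and arguably cleaner than the paper's: instead of invoking \cite[thm.\,7.2]{GR03} for the Fedosov star-product and then checking that $\hat\Lambda_X-\lambda_X\in\mathbb{C}\llbracket\nu\rrbracket$, you apply lemma~\ref{5} directly on the full semi-simple $\fg$ and restrict to $\fs$. Both arguments produce a quantum moment map for $\ast^{(\nabla,0)}_\nu$ and transport it via $T^{-1}$.

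The reverse direction has a genuine gap, which you yourself flag: you construct the obstruction class $[c]\in H^2_{\CE}(\fs)\llbracket\nu\rrbracket$ and show it vanishes under the hypothesis, but you do not establish that $[c]=0$ forces $\mathbb{S}$-equivalence with $\ast^{(\nabla,0)}_\nu$. The paper closes this gap by a concrete route that is essentially your second proposed plan carried out. It first uses \cite[prop.\,4.1]{B+98} to write $\ast_\nu$ as $\mathbb{S}$-equivalent, via some $T$, to a Fedosov star-product $\ast^{(\nabla,\omega_\nu)}_\nu$ for some $\omega_\nu\in Z^2(\mathbb{D})^{\mathbb{S}}\llbracket\nu\rrbracket$. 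Then \cite[thm.\,7.2]{GR03} supplies a Hamiltonian $\hat\Lambda_X$ for $X^\star$ relative to $\ast^{(\nabla,\omega_\nu)}_\nu$ together with the explicit relation $\iota_{X^\star}(\omega+\nu\,\omega_\nu)=-d\hat\Lambda_X$. Comparing with $T(\Lambda_X)$ one finds $\hat\Lambda_X-T(\Lambda_X)\in\mathbb{C}\llbracket\nu\rrbracket$, hence $(\omega+\nu\,\omega_\nu)(X^\star,Y^\star)=T(\Lambda_{[X,Y]})$. Evaluating at the identity shows the Chevalley-Eilenberg cocycle determined by $\omega+\nu\,\omega_\nu$ is a coboundary, so $\omega_\nu$ is a formal series of exterior derivatives of $\mathbb{S}$-invariant $1$-forms, and \cite[thm.\,3.1\,\&\,3.2]{B+98} then yields the $\mathbb{S}$-equivalence of $\ast^{(\nabla,\omega_\nu)}_\nu$ with $\ast^{(\nabla,0)}_\nu$. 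The key point is that the result of \cite{GR03} provides the missing bridge between your abstract obstruction $[c]$ and the de Rham class parametrizing Fedosov star-products; without it, or without an explicit order-by-order construction that you did not carry out, the injectivity of the map from $\mathbb{S}$-equivalence classes to $H^2_{\CE}(\fs)\llbracket\nu\rrbracket$ remains unproved.
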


\begin{proof} 
It is clear from remark \ref{10} and lemma \ref{intder} that the $\mathbb{S}$-invariance property of $\ast_\nu$ is expressed both in the necessary condition and in the sufficient condition of this proposition. As a consequence, we can choose an operator $T \in \Op^{\mathbb{S}}\left(\ast_{\nu}\right)$ and a formal power series $\omega_\nu \in Z^2\left(\mathbb{D}\right)^{\mathbb{S}}\llbracket\nu\rrbracket$ such that the star-product $\ast^{\left(\nabla, \,\omega_\nu\right)}_{\nu}$ provided by the Fedosov's construction \cite{Fe94} satisfies the relation \,$T\left(\ast_\nu\right) = \ast^{\left(\nabla, \,\omega_\nu\right)}_{\nu}$; \cite[\S\,4, prop.\,4.1]{B+98}. In view of remark \ref{10}, the vector field $X^\star$ is a derivation of $\ast^{\left(\nabla, \,\omega_\nu\right)}_{\nu}$ for each $X \in \fs$. The combinaison of this fact with well known results on quantum moment maps for Fedosov star-products produces a linear map $\hat{\Lambda} : \fs \rightarrow \mathcal{C}^\infty\left(\mathbb{D}\right)\llbracket\nu\rrbracket : X \mapsto \hat{\Lambda}_X$ such that 
\begin{eqnarray}\label{condi}
\iota_{X^\star}\left(\omega + \nu \,\omega_\nu\right) = - d\hat{\Lambda}_X  \text{\,\, and \,\,} X^\star = \frac{1}{2 \nu} \left[\hat{\Lambda}_X, -\right]_{\ast^{\left(\nabla, \,\omega_\nu\right)}_{\nu}}
\end{eqnarray}
for each $X \in \fs$; \cite[\S\,7, thm.\,7.2]{GR03}. After these preliminary considerations, let's prove separately the necessary condition and the sufficient condition of this proposition.

\vspace{-3.5 mm}

\noindent \scriptsize $\bullet$ \normalsize \,\,\textsc{Necessary condition.} In view of our hypothesis, we can choose $\omega_\nu = 0$. Therefore, the definition of the moment map $\lambda$, relation (\ref{condi}) and lemma \ref{intder} and allow us to deduce that $\hat{\Lambda}_X - \lambda_X \in \mathbb{C}\llbracket\nu\rrbracket$ and $X^\star \,=\, \frac{1}{2 \nu} \left[\lambda_X, -\right]_{\ast^{\left(\nabla, \,\omega_\nu\right)}_{\nu}}$ for each $X \in \fs$. In particular, as the operator $T$ is a $\mathbb{S}$-equivalence, we obtain 
\begin{eqnarray} \nonumber && X^\star \,=\, T^{-1} \circ \left(\frac{1}{2 \nu} \left[\lambda_X, -\right]_{\ast^{\left(\nabla, \,\omega_\nu\right)}_{\nu}}\right) \circ \,T \,=\, \frac{1}{2 \nu} \left[T^{-1}\left(\lambda_X\right), -\right]_{\ast_{\nu}} \\  \nonumber &\text{and}& \frac{1}{2 \nu} \left[T^{-1}\left(\lambda_X\right), T^{-1}\left(\lambda_Y\right)\right]_{\ast_\nu} \,=\, T^{-1}\left(\frac{1}{2 \nu} \left[\lambda_X, \lambda_Y\right]_{\ast^{\left(\nabla, \,\omega_\nu\right)}_{\nu}}\right) \,=\, T^{-1}\left(X^\star\left(\lambda_Y\right)\right) \,=\, T^{-1}\left(\lambda_{\left[X, Y\right]}\right).
\end{eqnarray}
for each $X, Y \in \fs$. As a consequence, the linear map \,$T^{-1} \circ \lambda$\, satisfies (\ref{7}) with $D_X = X^\star$ for each $X, Y \in \fs$. This proves the necessary condition.

\vspace{-3.5 mm}

\noindent \scriptsize $\bullet$ \normalsize \,\,\textsc{Sufficient condition.} Let $\Lambda : \fs \rightarrow \mathcal{C}^\infty\left(\mathbb{D}\right)\llbracket\nu\rrbracket : X \mapsto \Lambda_X$ be a linear map such that (\ref{7}) hold with $D_X = X^\star$ for each $X, Y \in \fs$. By using the properties of $T \in \Op^{\mathbb{S}}\left(\ast_{\nu}\right)$ and the definition of $\hat{\Lambda}$, we get
$$\frac{1}{2 \nu} \left[\hat{\Lambda}_X, -\right]_{\ast^{\left(\nabla, \,\omega_\nu\right)}_{\nu}} \,=\, X^\star \,=\, T \circ X^\star \circ T^{-1} \,=\, T \circ \left(\frac{1}{2 \nu} \left[\Lambda_X, -\right]_{\ast_\nu}\right) \circ \,T^{-1} \,=\, \frac{1}{2 \nu} \left[T\left(\Lambda_X\right), -\right]_{\ast^{\left(\nabla, \,\omega_\nu\right)}_{\nu}}$$ for all $X \in \fs$. In particular, we deduce from lemma \ref{intder} and relation (\ref{condi}) that $\hat{\Lambda}_X - T\left(\Lambda_X\right) \in \mathbb{C}\llbracket\nu\rrbracket$ and then $\iota_{X^\star}\left(\omega + \nu \,\omega_\nu\right) = - d \left(T\left(\Lambda_X\right)\right)$ for all $X \in \fs$. As a result, for each $X, Y \in \fs$, we have 
\begin{eqnarray} \nonumber \left(\omega + \nu \,\omega_\nu\right)\left(X^\star, Y^\star\right) \,\,\,=\,\,\, - d \left(T\left(\Lambda_X\right)\right)\left(Y^\star\right) &=& - \frac{1}{2 \nu} \left[T\left(\Lambda_Y\right), T\left(\Lambda_X\right)\right]_{\ast^{\left(\nabla, \,\omega_\nu\right)}_{\nu}} \\ \nonumber &=& T\left(\frac{1}{2 \nu} \left[\Lambda_X, \Lambda_Y\right]_{\ast_\nu}\right) \,\,\,=\,\,\, T\left(\Lambda_{\left[X, Y\right]}\right).
\end{eqnarray} 
The evaluation of $\omega + \nu \,\omega_\nu$ at the base point $\Id \in \mathbb{S} \simeq \mathbb{D}$ defines a formal power series $c$ in $\nu$ which coefficients are Chevalley-Eilenberg $2$-cocycles for the trivial representation of $\fs$ on $\mathbb{C}$. As we have the equality \,$c\left(X, Y\right) = T\left(\Lambda_{\left[X, Y\right]}\right)\left(\Id\right)$\, for each $X, Y \in \fs$, it is clear that these coefficients are Chevalley-Eilenberg $2$-coboundaries. It follows that $\omega_\nu$ is a formal power series in $\nu$ which coefficients are exterior derivative of $\mathbb{S}$-invariant differential $1$-forms on $\mathbb{D}$. The star-product $\ast^{\left(\nabla, \,\omega_\nu\right)}_{\nu}$ is then $\mathbb{S}$-equivalent to $\ast^{\left(\nabla, 0\right)}_{\nu}$ in virtue of the theorem \cite[\S\,3, thm.\,3.1 \& thm.\,3.2]{B+98} mentioned in the previous section. This concludes the proof of the sufficient condition.
\end{proof}

\noindent Let's point out that the key of this proof lies in the fact that we can choose the map $\hat{\Lambda}$ such that $$T^{-1}\left(\hat{\Lambda}_{\left[X, Y\right]}\right) = \frac{1}{2 \nu} \left[T^{-1}\left(\hat{\Lambda}_X\right), T^{-1}\left(\hat{\Lambda}_Y\right)\right]_{\ast_\nu}$$ for each $X, Y \in \fs$ if and only if $\ast_\nu$ is $\mathbb{S}$-equivalent to $\ast^{\left(\nabla, 0\right)}_{\nu}$. The non-triviality of this statement comes from the fact that the Lie algebra $\fs$ is not semi-simple.

\begin{rem}\label{rem}
In the context of proposition \ref{pqmm}, we deduce from lemma \ref{intder} and relation (\ref{12}) that the map $\Lambda_X$ is uniquely determined for $X \in \fn = \left[\fs, \fs\right]$ and uniquely determined up to a formal constant for $X \in \fa$. Moreover, the same argument as the one used in the proof of lemma \ref{5} shows that $\Lambda = \lambda + o\left(\nu\right)$.
\end{rem}

Let's conclude this section by introducing a particular case of proposition \ref{pqmm}.

\begin{df}\label{covariance}
We say that the star-product $\ast_\nu$ is \emph{$\fs$-covariant} if the equation $$\frac{1}{2 \nu} \left[\lambda_X, \lambda_Y\right]_{\ast_\nu} = \lambda_{\left[X, Y\right]}$$ is satisfied for each $X, Y \in \fs$.
\end{df}

\begin{prop}\label{scov}
The star-product $\ast_\nu$ is \,$\mathbb{S}$-invariant, $\fs$-covariant and $\mathbb{S}$-equivalent to the Fedosov star-product $\ast^{\left(\nabla, 0\right)}_{\nu}$ if and only if 
\,$X^\star = \left\{\lambda_X, -\right\} = \frac{1}{2 \nu} \left[\lambda_X, -\right]_{\ast_\nu}$ for each $X \in \fs$.
\end{prop}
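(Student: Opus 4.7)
The strategy is to reduce both implications of the biconditional to Proposition \ref{pqmm} together with Remark \ref{rem}, noting first that in each direction the $\mathbb{S}$-invariance statement is essentially automatic: by Lemma \ref{intder} the operator $\frac{1}{2\nu}\left[\lambda_X, -\right]_{\ast_\nu}$ is always a derivation of $\ast_\nu$, so the identity $X^\star = \frac{1}{2\nu}\left[\lambda_X, -\right]_{\ast_\nu}$ forces $X^\star \in \Der\left(\ast_\nu\right)$ for each $X \in \fs$, and Remark \ref{10} then yields the $\mathbb{S}$-invariance of $\ast_\nu$ since $\mathbb{S}$ is connected. The real work is therefore to match $\fs$-covariance with $\mathbb{S}$-equivalence to $\ast^{\left(\nabla, 0\right)}_{\nu}$ through the quantum moment map.

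For the sufficient condition, I would set $\Lambda_X := \lambda_X$ for each $X \in \fs$ and verify the two conditions (\ref{7}) with $D_X = X^\star$ so as to invoke Proposition \ref{pqmm}. The first condition is exactly the hypothesis; the second, $\lambda_{\left[X, Y\right]} = \frac{1}{2\nu}\left[\lambda_X, \lambda_Y\right]_{\ast_\nu}$, is precisely the $\fs$-covariance of Definition \ref{covariance}, which follows directly from the hypothesis together with the classical identity $X^\star\left(\lambda_Y\right) = \lambda_{\left[X, Y\right]}$ recalled in the introduction.

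For the necessary condition, Proposition \ref{pqmm} combined with Remark \ref{rem} produces a linear map $\Lambda : \fs \rightarrow \mathcal{C}^\infty\left(\mathbb{D}\right)\llbracket\nu\rrbracket$ with $\Lambda = \lambda + o\left(\nu\right)$ satisfying $X^\star = \frac{1}{2\nu}\left[\Lambda_X, -\right]_{\ast_\nu}$ for each $X \in \fs$. The aim is to show that $g_X := \Lambda_X - \lambda_X \in \mathbb{C}\llbracket\nu\rrbracket$ for every $X \in \fs$, which will suffice because $\ast_\nu$-brackets with formal constants vanish. Combining the quantum moment identity $\frac{1}{2\nu}\left[\Lambda_X, \lambda_Y\right]_{\ast_\nu} = X^\star\left(\lambda_Y\right) = \lambda_{\left[X, Y\right]}$ with the $\fs$-covariance identity $\frac{1}{2\nu}\left[\lambda_X, \lambda_Y\right]_{\ast_\nu} = \lambda_{\left[X, Y\right]}$ yields $\frac{1}{2\nu}\left[g_X, \lambda_Y\right]_{\ast_\nu} = 0$ for every $Y \in \fs$.

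The main obstacle is extracting from this vanishing the fact that $g_X$ is a formal constant. Expanding $g_X = \sum_{k \geq 1} \nu^k h^k_X$ and using that the $\ast_\nu$-commutator starts at order $\nu$ with $2\nu \{-, -\}$, the lowest-order coefficient of $\frac{1}{2\nu}\left[g_X, \lambda_Y\right]_{\ast_\nu}$ is $\{h^1_X, \lambda_Y\} = -Y^\star\left(h^1_X\right)$, which must vanish for every $Y \in \fs$. Since $\mathbb{S}$ acts simply transitively on $\mathbb{D} \simeq \mathbb{S}$ by Proposition \ref{iwa2}, the fundamental vector fields $\{Y^\star : Y \in \fs\}$ span $T\mathbb{D}$ pointwise, and $h^1_X$ is constant on the connected manifold $\mathbb{D}$. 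Replacing $g_X$ by $g_X - \nu h^1_X$ preserves the vanishing and strips off its leading order, so induction on $k$ gives $h^k_X \in \mathbb{C}$ for each $k$, concluding that $g_X \in \mathbb{C}\llbracket\nu\rrbracket$ and hence $X^\star = \frac{1}{2\nu}\left[\lambda_X, -\right]_{\ast_\nu}$ as required.
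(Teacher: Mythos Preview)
Your proof is correct. The sufficient direction matches the paper exactly. For the necessary direction, both you and the paper start from Proposition~\ref{pqmm} to obtain $\Lambda = \lambda + o(\nu)$ and then derive the key vanishing $\frac{1}{2\nu}\left[\Lambda_X - \lambda_X, \lambda_Y\right]_{\ast_\nu} = 0$ for all $Y \in \fs$; however, from this point on the arguments diverge.

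The paper does not exploit this vanishing directly. Instead it sets $\Lambda'_X := \frac{1}{\nu}(\Lambda_X - \lambda_X)$, uses the \emph{second} quantum moment map relation to obtain $\frac{1}{2}\left[\Lambda'_X, \Lambda'_H\right]_{\ast_\nu} = \Lambda'_{[X,H]}$, and then invokes the root space decomposition of $\fg$ (so that $[X,H]$ is a scalar multiple of $X$ for $X$ in a root space) together with an order-by-order recursion to force $\Lambda'_X = 0$ on $\fn$ first, and only afterward to get $\Lambda'_H \in \mathbb{C}\llbracket\nu\rrbracket$ on $\fa$. Your argument is more elementary: you read off directly from $\frac{1}{2\nu}[g_X, \lambda_Y]_{\ast_\nu} = 0$ that the leading coefficient $h^k_X$ is annihilated by every $Y^\star$, hence constant by simple transitivity, and then induct by subtracting the constant. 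This bypasses both the second relation in (\ref{7}) and the root space structure entirely, treating all $X \in \fs$ uniformly. What the paper's route buys is the slightly sharper conclusion $\Lambda_X = \lambda_X$ (not merely modulo $\mathbb{C}\llbracket\nu\rrbracket$) for $X \in \fn$, consistent with Remark~\ref{rem}; what your route buys is economy and independence from the fine structure of $\fs$.
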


\begin{proof}
The sufficient condition is clear in view of properties of the moment map $\lambda$, definition \ref{covariance} and proposition \ref{pqmm}. Let's assume that $\ast_\nu$ is \,$\mathbb{S}$-invariant, $\fs$-covariant and $\mathbb{S}$-equivalent to $\ast^{\left(\nabla, 0\right)}_{\nu}$ and let's prove the necessary condition. In virtue of proposition \ref{pqmm}, we consider a linear map $\Lambda : \fs \rightarrow \mathcal{C}^\infty\left(\mathbb{D}\right)\llbracket\nu\rrbracket : X \mapsto \Lambda_X$ satisfying (\ref{7}) with $D_X = X^\star$ for each $X, Y \in \fs$. For each $X \in \fs$, we set $$\Lambda^\prime_X := \frac{1}{\nu} \left(\Lambda_X - \lambda_X\right) \in \mathcal{C}^\infty\left(\mathbb{D}\right)\llbracket\nu\rrbracket.$$ 
For all $X, Y \in \fs$, our hypotheses and the properties of the moment map $\lambda$ allow us to deduce the equality $$\frac{1}{2 \nu} \left[\nu \Lambda^\prime_X, \lambda_Y\right]_{\ast_\nu} \,=\, \frac{1}{2 \nu} \left[\Lambda_X - \lambda_X, \lambda_Y\right]_{\ast_\nu} \,=\, X^\star\left(\lambda_Y\right) - \lambda_{\left[X, Y\right]} = 0.$$ As a consequence, for all $X, Y \in \fs$, we obtain $$\frac{1}{2 \nu} \left[\nu \Lambda^\prime_X, \nu \Lambda^\prime_Y\right]_{\ast_\nu} \,=\, \frac{1}{2 \nu} \left[\Lambda_X, \Lambda_Y\right]_{\ast_\nu} - \,\frac{1}{2 \nu} \left[\lambda_X, \lambda_Y\right]_{\ast_\nu} \,=\, \Lambda_{\left[X, Y\right]} - \lambda_{\left[X, Y\right]} \,=\, \nu \Lambda^\prime_{\left[X, Y\right]}.$$ 
For all $X \in \fn$, by using the properties of root space decomposition of $\mathfrak{g}$ and a recursive argument on the order terms of $\Lambda^\prime$ in $\nu$ in the relation $\frac{1}{2} \left[\Lambda^\prime_X, \Lambda^\prime_H\right]_{\ast_\nu} =  \Lambda^\prime_{\left[X, H\right]}$ for $H \in \fa$, we deduce easily the equality $\Lambda^\prime_X = 0$. Therefore, as $\left[\fs, \fs\right] = \fn$, we get $$X^\star\left(\nu \Lambda^\prime_H\right) \,=\, X^\star\left(\Lambda_H - \lambda_H\right) \,=\, \frac{1}{2 \nu} \left[\Lambda_X, \Lambda_H\right]_{\ast_\nu} - \,\lambda_{\left[X, H\right]} \,=\, \nu \Lambda^\prime_{\left[X, H\right]} \,=\, 0$$ for all $X \in \fs$ and $HÊ\in \fa$. It follows that $\Lambda^\prime_H \in \mathbb{C}\llbracket\nu\rrbracket$ for each $H \in \fa$. As a conclusion, we have $\frac{1}{2 \nu} \left[\Lambda^\prime_X, -\right]_{\ast_\nu} = 0$ for each $X \in \fs$. This completes the proof of the proposition.
\end{proof}

\section{Deformation quantization method}

As suggested above in view of reference \cite{BG15} and proposition \ref{fed0}, we fix an explicit $\mathbb{S}$-invariant star-product $\ast_\nu$ on $\mathbb{D}$ which is $\mathbb{S}$-equivalent to the Fedosov star-product $\ast^{\left(\nabla, 0\right)}_{\nu}$ where $\nabla$ is the $G$-invariant symplectic connection associated to the K\"ahlerian structure of $\mathbb{D}$. The data of a $G$-invariant star-products on $\mathbb{D}$ is then equivalent to the data of an invertible linear convolution operator $T \in \Op^{\mathbb{S}}\left(\ast_\nu\right)$ of the form (\ref{conv}) such that $T\left(\ast_\nu\right)$ is $G$-invariant. We will work through the identification $\mathbb{D} \simeq \mathbb{S}$. 

\subsection{Equivariant automorphisms of star-products}

In this subsection, we develop some basic results about the following class of operators $$\Aut\left(\ast_\nu\right) := \left\{T \in \Op^{\mathbb{S}}\left(\ast_\nu\right) : T\left(\ast_\nu\right) = \ast_\nu\right\}.$$ A element of $\Aut\left(\ast_\nu\right)$ we be called \emph{$\mathbb{S}$-automorphism of $\ast_\nu$}. 

\begin{lem}\label{31}
For each operators $T, T^\prime \in \op$ such that $T\left(\ast_\nu\right)$ and $T^\prime\left(\ast_\nu\right)$ are two $G$-equivalent $G$-invariant star-products on $\mathbb{D}$, there exist $S \in \Aut\left(\ast_\nu\right)$ and \,$U$ a $G$-equivalence of $G$-invariant star-products on $\mathbb{D}$ satisfying $$T^\prime = U \circ T \circ S.$$
\end{lem}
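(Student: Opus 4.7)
The plan is to construct the factorization $T' = U \circ T \circ S$ by first extracting $U$ directly from the hypothesis and then defining $S$ as the unique operator that makes the identity hold. By assumption, $T(\ast_\nu)$ and $T'(\ast_\nu)$ are $G$-equivalent $G$-invariant star-products on $\mathbb{D}$, so there exists a $G$-equivalence $U$ of $G$-invariant star-products on $\mathbb{D}$ satisfying $U(T(\ast_\nu)) = T'(\ast_\nu)$. With such a $U$ fixed once and for all, I would set $S := T^{-1} \circ U^{-1} \circ T'$. The desired identity $T' = U \circ T \circ S$ is then automatic by construction, so the heart of the argument reduces to verifying that $S$ belongs to $\Aut(\ast_\nu)$.

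To establish $S \in \Aut(\ast_\nu)$, I would check the four defining conditions in order. First, $S$ has the formal shape $\Id + \sum_{k \geq 1} \nu^k S_k$, since each of $T$, $T'$, $U$ and their formal inverses has this shape and composition preserves it. Second, each $S_k$ commutes with the $\mathbb{S}$-action: $T$ and $T'$ are $\mathbb{S}$-equivalences by assumption, while the $G$-equivalence $U$ is automatically $\mathbb{S}$-equivariant because $\mathbb{S} \subset G$. Third, the $S_k$ vanish on constants because $T$, $T'$ and $U$ all act as the identity on $\mathbb{C}\llbracket\nu\rrbracket$. Fourth, $S(\ast_\nu) = \ast_\nu$: chaining the intertwining relations satisfied by $T'$, $U^{-1}$ and $T^{-1}$ makes the star-products $T'(\ast_\nu)$ and $T(\ast_\nu)$ telescope, bringing us back to $\ast_\nu$ by the very choice of $U$.

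There is no genuine obstacle here; the statement is essentially an algebraic bookkeeping fact, saying that the fibers of the map $T \mapsto T(\ast_\nu)$ above each $G$-equivalence class of $G$-invariant star-products on $\mathbb{D}$ are cosets of $\Aut(\ast_\nu)$ acting on the right. The only conceptual point worth highlighting in the write-up is that the $G$-equivariance of $U$ is strictly stronger than its $\mathbb{S}$-equivariance, and it is precisely this extra rigidity that allows $U$ to be treated simultaneously as an element of the intertwining calculus and as a factor making $S$ well-defined in $\Op^{\mathbb{S}}(\ast_\nu)$ rather than in a strictly larger space of $\mathbb{S}$-equivalences.
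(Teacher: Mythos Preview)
Your proposal is correct and follows exactly the same construction as the paper: choose a $G$-equivalence $U$ with $U(T(\ast_\nu))=T'(\ast_\nu)$ and set $S:=(U\circ T)^{-1}\circ T'=T^{-1}\circ U^{-1}\circ T'$. The paper states the lemma as ``direct'' and gives only this one-line definition of $S$, whereas you additionally spell out the verification that $S\in\Aut(\ast_\nu)$; there is no difference in strategy.
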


\noindent This lemma is direct. In fact, if $U$ denotes a $G$-equivalence of $G$-invariant star-products on $\mathbb{S}$ such that $\left(U \circ T\right)\left(\ast_\nu\right) = T^\prime\left(\ast_\nu\right)$, we can set $S \,:=\, \left(U \circ T\right)^{-1} \circ \,T^\prime \,\in\, \Aut\left(\ast_\nu\right).$

\noindent Let $S$ be a $\mathbb{S}$-automorphism of $\ast_\nu$. Given that $\mathbb{S} \simeq \mathbb{D}$ is a connected simply connected Lie group, there exists $\Lambda \in \mathcal{C}^\infty\left(\mathbb{S}\right)\llbracket\nu\rrbracket$ such that
\begin{eqnarray}\label{11}
S = \exp\left(\frac{1}{2 \nu} \left[\nu \Lambda, -\right]_{\ast_\nu}\right) ;
\end{eqnarray}
\cite[\S\,4, prop.\,23]{Gu11}. Let's consider $X \in \fs$ and $\Lambda_X \in \mathcal{C}^\infty\left(\mathbb{S}\right)\llbracket\nu\rrbracket$ such that $\frac{1}{2 \nu} \left[\Lambda_X, -\right]_{\ast_\nu} = X^\star \in \Der\left(\ast_\nu\right)$. As $S$ commutes with the left translations on $\mathbb{S}$, we have $$0 \,=\, \left[X^\star, \frac{1}{2 \nu} \left[\nu \Lambda, -\right]_{\ast_\nu}\right] \,=\, \frac{1}{2 \nu} \left[\frac{1}{2 \nu} \left[\Lambda_X, \nu \Lambda\right]_{\ast_\nu}, -\right]_{\ast_\nu} =\, \frac{1}{2 \nu} \left[\nu X^\star\left(\Lambda\right), -\right]_{\ast_\nu}$$ where relation (\ref{compoder}) is used in the second equality. From lemma \ref{intder}, we deduce that $X^\star\left(\Lambda\right)$ lies in $\mathbb{C}\llbracket\nu\rrbracket$. In particular, as $X$ is arbitrary, we have $X^\star\left(\Lambda\right) = 0$ for each $X \in \left[\mathfrak{s}, \mathfrak{s}\right] = \mathfrak{n}$. In addition, by using the relation $\Ad_A\left(\fn\right) = \fn$, we get $$0 = \left(\Ad_a\left(X\right)\right)^\star_{an}\left(\Lambda\right) = X^\star_n\left(\tau^\star_a \Lambda\right)$$ for each $a \in A$, $n \in N$ and $X \in \mathfrak{n}$. This leads us to the following proposition.

\begin{lem}\label{17}
Each $S \in \Aut\left(\ast_\nu\right)$ is of the form (\ref{11}) where $\Lambda \in \mathcal{C}^\infty\left(\mathbb{S}\right)\llbracket\nu\rrbracket$ satisfies $$\Lambda\left(an\right) = \Lambda\left(a\right) \text{\, and \,\,} \gamma^S_H := H^\star\left(\Lambda\right) \in \mathbb{C}\llbracket\nu\rrbracket$$ for each $a \in A$, $n \in N$ and $H \in \fa$. In particular, the linear map \,$\gamma^S : H \in \fa \mapsto \gamma^S_H$\, encodes univocally the data of $S$ and the space $\Aut\left(\ast_\nu\right)$ is parametrized by $\mathbb{C}^r\llbracket\nu\rrbracket$.
\end{lem}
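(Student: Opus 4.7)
The plan is to pick up where the paragraph immediately preceding the statement leaves off. There, one has already produced $\Lambda \in \mathcal{C}^\infty(\mathbb{S})\llbracket\nu\rrbracket$ with $S = \exp\bigl(\tfrac{1}{2\nu}[\nu\Lambda,-]_{\ast_\nu}\bigr)$ and established that $X^\star(\Lambda) \in \mathbb{C}\llbracket\nu\rrbracket$ for every $X \in \fs$, via lemma \ref{intder} applied to $D_{X^\star(\Lambda)} = 0$. Specializing $X = H \in \fa$ yields $\gamma^S_H := H^\star(\Lambda) \in \mathbb{C}\llbracket\nu\rrbracket$ immediately. The missing refinement is $X^\star(\Lambda) = 0$ for $X \in \fn = [\fs,\fs]$, which I would obtain by writing $X = \sum_i [X_i,Y_i]$ with $X_i,Y_i \in \fs$ and using that $X \mapsto X^\star$ is a Lie algebra homomorphism: since each $X_i^\star(\Lambda)$ and $Y_i^\star(\Lambda)$ is a constant in $\mathbb{C}\llbracket\nu\rrbracket$, hence annihilated by any further derivation, one gets $X^\star(\Lambda) = \sum_i [X_i^\star,Y_i^\star](\Lambda) = 0$.

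The infinitesimal vanishing $X^\star(\Lambda) = 0$ for $X \in \fn$, together with connectedness of $N$ and the identity $X^\star_s = \tfrac{d}{dt}\big|_{t=0}\exp(-tX)\cdot s$, forces $\Lambda$ to be left-$N$-invariant. Since $A$ normalizes $N$, for any $a \in A$ and $n \in N$, $an = (ana^{-1})\,a$ with $ana^{-1} \in N$, so $\Lambda(an) = \Lambda(a)$, which is the first asserted property. In particular $\Lambda$ reduces to a formal smooth function on the abelian group $A$; in exponential coordinates $\fa \simeq A$, the constancy of each $H^\star(\Lambda) = \gamma^S_H$ says that all directional derivatives of $\Lambda\circ\exp$ are constant, so $\Lambda\circ\exp$ is affine on $\fa$, of the form $\Lambda(\exp(H)\,n) = \Lambda(\Id) - \gamma^S(H)$ (the sign tracking the $\exp(-tX)$ convention in $X^\star$). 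Since by lemma \ref{intder} adding a scalar to $\Lambda$ leaves $S$ unchanged, the assignment $S \mapsto \gamma^S \in \fa^\star \otimes \mathbb{C}\llbracket\nu\rrbracket \simeq \mathbb{C}^r\llbracket\nu\rrbracket$ is injective.

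For surjectivity, given any $\gamma \in \fa^\star \otimes \mathbb{C}\llbracket\nu\rrbracket$, I would define $\Lambda$ on $\mathbb{S} = AN$ by $\Lambda(\exp(H)\,n) := -\gamma(H)$; this is smooth because the Iwasawa decomposition is a diffeomorphism. Reversing the computation of the paragraph preceding the statement, the identity $[X^\star,\tfrac{1}{2\nu}[\nu\Lambda,-]_{\ast_\nu}] = \tfrac{1}{2\nu}[\nu\,X^\star(\Lambda),-]_{\ast_\nu}$ together with the constancy of $X^\star(\Lambda)$ for every $X \in \fs$ imply that $\exp\bigl(\tfrac{1}{2\nu}[\nu\Lambda,-]_{\ast_\nu}\bigr)$ commutes with every $X^\star$, hence with all left translations by the connected group $\mathbb{S}$, so it is a convolution operator in $\Op^{\mathbb{S}}(\ast_\nu)$ fixing $\ast_\nu$; that is, it belongs to $\Aut(\ast_\nu)$.

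The only mildly delicate step is this surjectivity check: one has to verify that commutation with left translations is enough to upgrade the formal operator to a genuine $\mathbb{S}$-equivalence, which is exactly the content of the remark on convolution operators made in Section~3. Everything else is bookkeeping with the Iwasawa decomposition $\mathbb{S} = AN$ and an application of lemma \ref{intder}.
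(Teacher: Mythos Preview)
Your argument is correct and follows essentially the same line as the paper's. The paper establishes the first part in the paragraph preceding the lemma (using the same commutator identity and lemma~\ref{intder} to get $X^\star(\Lambda)\in\mathbb{C}\llbracket\nu\rrbracket$, then $X^\star(\Lambda)=0$ for $X\in\fn=[\fs,\fs]$, then the normalization $\Ad_A(\fn)=\fn$ to conclude $\Lambda(an)=\Lambda(a)$); your rephrasing via left-$N$-invariance and $an=(ana^{-1})a$ is the same computation in slightly different clothing. For the parametrization, the paper is terser: it records the identity $S(\Lambda_H)=\Lambda_H-\nu\,\gamma^S_H$ (equation~(\ref{actautS})) and leaves the rest implicit, whereas you spell out both injectivity (via the affine structure of $\Lambda$ on $\fa$, modulo constants) and surjectivity (by exhibiting $\Lambda$ and checking that the exponential of the inner derivation is an $\mathbb{S}$-automorphism). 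Your added detail on surjectivity is sound and fills in what the paper takes for granted.
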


\noindent Let's point out that the second assertion follows from the equality 
\begin{eqnarray}\label{actautS}
S\left(\Lambda_H\right) = \Lambda_H - \nu \,\gamma^S_H
\end{eqnarray}
which is valid for each $S \in \Aut\left(\ast_\nu\right)$, $H \in \fa$ and $\Lambda_H \in \mathcal{C}^\infty\left(\mathbb{S}\right)\llbracket\nu\rrbracket$ such that $\frac{1}{2 \nu} \left[\Lambda_H, -\right]_{\ast_\nu} = H^\star$. 


\subsection{Intertwiners and quantum moment maps}

For any operator $T \in \op$ such that $T\left(\ast_\nu\right)$ is a $G$-invariant star-product on $\mathbb{D}$, let's define the map $$D^T : X \in \fg \,\mapsto\, D^T_X := T^{-1} \circ X^\star \circ T.$$ We deduce from remark \ref{10} that $D^T$ is a Lie algebra homomorphism of which the image lies in $\Der\left(\ast_\nu\right)$; \cite[Ch.\,2, lem.\,2.3.3]{Ko14}. As the $0^{\text{th}}$ order term of $T$ in $\nu$ coincides with $\Id$, we have $$D^T_X = X^\star + o\left(\nu\right) \text{ \,for any\, } X \in \fg.$$ As a consequence, lemma \ref{5} provides us with a unique quantum moment map $\Lambda^T = \lambda + o\left(\nu\right)$ associated with $D^T$. The data of this quantum moment map is equivalent to the data of $D^T$.

\begin{prop}\label{32}
The derivation $D^T_X$ does not depend on $T$ for $X \in \fs \oplus \fm$ and it coincides with $X^\star$ if $X \in \fs$. In particular, for each operators $T, T^\prime \in \op$ such that $T\left(\ast_\nu\right)$ and $T^\prime\left(\ast_\nu\right)$ are $G$-invariant star-products on $\mathbb{D}$, we have $$\Lambda^T_X - \Lambda^{T^\prime}_X \in \nu\, \mathbb{C}\llbracket\nu\rrbracket \text{ \,when $X \in \fa \,\oplus\, Z\left(\fm\right)$ \, \, and \, \, } \Lambda^T_X = \Lambda^{T^\prime}_X \text{ \,when $X \in \fn \,\oplus\, \left[\fm, \fm\right]$}.$$
\end{prop}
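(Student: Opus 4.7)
The plan is to split the assertion $D^T_X = D^{T'}_X$ into the two cases $X \in \fs$ and $X \in \fm$, deriving the ``In particular'' consequences afterwards by translating equality of derivations into equality (mod constants) of the associated quantum moment maps.

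First I would handle $X \in \fs$: the fundamental vector field $X^\star$ is the infinitesimal generator of the left-translation action of $\mathbb{S}$ on $\mathbb{D} \simeq \mathbb{S}$. Since every $T \in \op$ is by definition $\mathbb{S}$-equivariant (each component $T_k$ commutes with $\tau_s$ for all $s \in \mathbb{S}$), $T$ commutes with $X^\star$, and therefore $D^T_X = T^{-1} \circ X^\star \circ T = X^\star$. This gives both the coincidence with $X^\star$ and the $T$-independence on $\fs$ in one stroke.

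For $X \in \fm$ I would fix an $\mathbb{S}$-quantum moment map $\Lambda : \fs \to \mathcal{C}^\infty(\mathbb{D})\llbracket\nu\rrbracket$ provided by proposition \ref{pqmm}. The previous step combined with lemma \ref{intder} shows $\Lambda^T_Y - \Lambda_Y \in \mathbb{C}\llbracket\nu\rrbracket$ for $Y \in \fs$, and remark \ref{rem} sharpens this to the exact equality $\Lambda^T_W = \Lambda_W$ for $W \in \fn$. Since $[\fm, \fa] = 0$ and $[\fm, \fn] \subset \fn$, the bracket $[X, Y]$ lies in $\fn \subset \fs$ for any $Y \in \fs$, so expanding the Lie algebra homomorphism relation (\ref{7}) for $\Lambda^T$ yields
\begin{equation*}
\Lambda_{[X,Y]} \,=\, \Lambda^T_{[X,Y]} \,=\, \frac{1}{2\nu}\bigl[\Lambda^T_X, \Lambda^T_Y\bigr]_{\ast_\nu} \,=\, \frac{1}{2\nu}\bigl[\Lambda^T_X, \Lambda_Y\bigr]_{\ast_\nu} \,=\, - Y^\star\bigl(\Lambda^T_X\bigr),
\end{equation*}
the third equality because the $\nu$-constant $\Lambda^T_Y - \Lambda_Y$ is central for $\ast_\nu$ and the fourth because $\frac{1}{2\nu}[\Lambda_Y, -]_{\ast_\nu} = Y^\star$. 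So $Y^\star(\Lambda^T_X) = -\Lambda_{[X,Y]}$ is independent of $T$ for every $Y \in \fs$. Since the vector fields $\{Y^\star : Y \in \fs\}$ span $T_x\mathbb{D}$ at every point (simple transitivity of the $\mathbb{S}$-action) and $\mathbb{S}$ is connected, this determines $\Lambda^T_X$ up to an additive element of $\mathbb{C}\llbracket\nu\rrbracket$, and hence $D^T_X = \frac{1}{2\nu}[\Lambda^T_X, -]_{\ast_\nu}$ uniquely.

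The ``In particular'' conclusion then follows routinely. From $D^T = D^{T'}$ on $\fs \oplus \fm$, lemma \ref{intder} gives $\Lambda^T_X - \Lambda^{T'}_X \in \mathbb{C}\llbracket\nu\rrbracket$ for $X \in \fs \oplus \fm$; matching the $0$th-order terms (both equal $\lambda_X$) upgrades the difference to $\nu\mathbb{C}\llbracket\nu\rrbracket$, which settles the case $X \in \fa \oplus Z(\fm)$. For $X \in \fn$ the exact equality $\Lambda^T_X = \Lambda^{T'}_X = \Lambda_X$ comes from remark \ref{rem} directly. For $X \in [\fm, \fm]$ one writes $X = \sum_i [Y_i, Z_i]$ with $Y_i, Z_i \in \fm$ and applies (\ref{7}): the additive $\nu$-constants $\Lambda^T_{Y_i} - \Lambda^{T'}_{Y_i}$, $\Lambda^T_{Z_i} - \Lambda^{T'}_{Z_i}$ drop out of the $\ast_\nu$-commutator, so $\Lambda^T_X = \Lambda^{T'}_X$. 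The decomposition $\fs \oplus \fm = \fa \oplus \fn \oplus [\fm, \fm] \oplus Z(\fm)$ coming from (\ref{1c}) exhausts all cases. The main obstacle is really the central identity $\Lambda^T_{[X,Y]} = \Lambda_{[X,Y]}$ of paragraph two: it is what transfers the $T$-independence on $\fs$ across the cross-bracket $[\fm, \fs] \subset \fn$ to $\fm$, and it relies crucially on the \emph{exact} uniqueness of the $\mathbb{S}$-QMM on $\fn$ provided by remark \ref{rem} — uniqueness only up to a constant, which is what holds on $\fa$, would not be enough.
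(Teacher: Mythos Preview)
Your proof is correct and follows essentially the same route as the paper's. The only cosmetic difference is that you route the comparison through a fixed auxiliary $\mathbb{S}$-quantum moment map $\Lambda$ from proposition \ref{pqmm}, showing $Y^\star(\Lambda^T_X) = -\Lambda_{[X,Y]}$ is $T$-independent, whereas the paper works directly with the difference $\Lambda^T_Y - \Lambda^{T'}_Y$ and shows $X^\star(\Lambda^T_Y - \Lambda^{T'}_Y) = \Lambda^T_{[X,Y]} - \Lambda^{T'}_{[X,Y]} = 0$; both arguments hinge on the same two facts, namely $[\fm,\fs]\subset\fn$ and the exact uniqueness of the quantum moment map on $\fn$ (your remark \ref{rem}, the paper's ``similar argument to (\ref{12})'').
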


\begin{proof}
Let's consider $T, T^\prime \in \op$ such that $T\left(\ast_\nu\right)$ and $T^\prime\left(\ast_\nu\right)$ are $G$-invariant star-products on $\mathbb{D} \simeq \mathbb{S}$. As the operators $T$ and $T^\prime$ commute with the left translations on $\mathbb{S}$, it is clear that $$\frac{1}{2 \nu} \left[\Lambda^T_X, -\right]_{\ast_\nu} = D^T_X = X^\star = D^{T^\prime}_X = \frac{1}{2 \nu} \left[\Lambda^{T^\prime}_X, -\right]_{\ast_\nu}$$ for each $X \in \fs$. Then, lemma \ref{intder} and a similar argument to (\ref{12}) give us $$\Lambda^T_X = \Lambda^{T^\prime}_X \text{ \,for each } X \in \fn = \left[\fs, \fs\right] \text{ \,\, and \,\, } \Lambda^T_H - \Lambda^{T^\prime}_H \in \nu\, \mathbb{C}\llbracket\nu\rrbracket \text{ \,for each } H \in \fa.$$ Let's consider $Y \in \fm$. If we combine this result and the inclusion $\fm \subset N\left(\fn\right) \cap \fg_0$ with the quantum moment map property of $\Lambda^T$ and $\Lambda^{T^\prime}$, we obtain $$X^\star\left(\Lambda^T_Y - \Lambda^{T^\prime}_Y\right) = \Lambda^T_{\left[X, Y\right]} - \Lambda^{T^\prime}_{\left[X, Y\right]} = 0$$ for each $X \in \fs$. As $\left\{X^\star_s : X \in \fs\right\}$ spans $T_s\left(\mathbb{S}\right)$ for $s \in \mathbb{S}$, these last relations give us $\Lambda^T_Y - \Lambda^{T^\prime}_Y \in \nu\, \mathbb{C}\llbracket\nu\rrbracket$. In particular, we deduce from lemma \ref{intder} that $D^T_Y = D^{T^\prime}_Y$. Equality (\ref{1c}) and the adaptation of expression (\ref{12}) in this framework allow us to conclude the proof.
\end{proof}

\begin{rem}
The hypothesis made on the star-product $\ast_\nu$ are crucial. We remark that the existence of such a map $\left.\Lambda^T\right|_{\fs} = \lambda + o\left(\nu\right)$ uniquely determined on $\fn$ was already mentioned in proposition \ref{pqmm}, 
\end{rem}

\noindent Let's notice that the previous proposition is specific to the Lie subalgebra $\fs \oplus \fm \subset \fg$ as it coincides with $N\left(\fn\right)$. It shows that the derivation $D^T_X$ depends only on the star-product $\ast_\nu$ for $X \in \fs \oplus \fm$. In particular, if this star-product $\ast_\nu$ is $\fs$-covariant, we have the following result.

\begin{cor}\label{corcov}
The star-product $\ast_\nu$ is $\fs$-covariant if and only if $$\Lambda^T_X = \lambda_X \text{\, \, and \, \,} \Lambda^T_H - \lambda_H \in \nu\, \mathbb{C}\llbracket\nu\rrbracket$$ for each $X \in \fn$, $H \in \fa$ and $T \in \op$ such that $T\left(\ast_\nu\right)$ is $G$-invariant. In this case, if \,$T \in \op$ is any operator such that $T\left(\ast_\nu\right)$ is $G$-invariant, we have $\Lambda^T_Y - \lambda_Y \in \nu\, \mathbb{C}\llbracket\nu\rrbracket$ for each $Y \in \fm$.
\end{cor}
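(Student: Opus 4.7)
The plan is to assemble the corollary from Proposition \ref{scov} (which says $\fs$-covariance is equivalent to $X^\star = \frac{1}{2\nu}[\lambda_X, -]_{\ast_\nu}$ for $X \in \fs$), Proposition \ref{32} (which gives $D^T_X = X^\star$ on $\fs$), and Lemma \ref{intder} (the uniqueness up to a formal constant of a function producing a given inner derivation). The guiding idea is that $\fs$-covariance is exactly what makes $\lambda$ itself a quantum moment map on $\fs$, and then comparing $\lambda$ with $\Lambda^T$ via Lemma \ref{intder} yields both statements almost at once. Note the hypothesis that there exists such a $T \in \op$ with $T(\ast_\nu)$ $G$-invariant is free from Proposition \ref{fed0}.

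For the necessary direction, I would assume $\ast_\nu$ is $\fs$-covariant. By Proposition \ref{scov}, $X^\star = \frac{1}{2\nu}[\lambda_X, -]_{\ast_\nu}$ for $X \in \fs$; by Proposition \ref{32}, $\frac{1}{2\nu}[\Lambda^T_X, -]_{\ast_\nu} = D^T_X = X^\star$ for every such $T$. Comparing and applying Lemma \ref{intder} gives $\Lambda^T_X - \lambda_X \in \mathbb{C}\llbracket\nu\rrbracket$ for each $X \in \fs$, and the normalization $\Lambda^T = \lambda + o(\nu)$ promotes this constant to $\nu\mathbb{C}\llbracket\nu\rrbracket$, which already handles the $H \in \fa$ part. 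To sharpen to equality on $\fn = [\fs,\fs]$, I would use the homomorphism property $\Lambda^T_{[X,Y]} = \frac{1}{2\nu}[\Lambda^T_X, \Lambda^T_Y]_{\ast_\nu}$: writing $\Lambda^T_X = \lambda_X + c_X$, $\Lambda^T_Y = \lambda_Y + c_Y$ with $c_X, c_Y \in \nu\mathbb{C}\llbracket\nu\rrbracket$, the formal constants drop out of the $\ast_\nu$-bracket, so the right-hand side reduces to $\frac{1}{2\nu}[\lambda_X, \lambda_Y]_{\ast_\nu}$, which equals $\lambda_{[X,Y]}$ by $\fs$-covariance.

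For the sufficient direction, fix any $T \in \op$ with $T(\ast_\nu)$ $G$-invariant and assume $\Lambda^T_X = \lambda_X$ on $\fn$ and $\Lambda^T_H - \lambda_H \in \nu\mathbb{C}\llbracket\nu\rrbracket$ on $\fa$. For $X, Y \in \fs$, the quantum moment map identity and the fact that the $\ast_\nu$-bracket kills formal constants give
\begin{equation*}
\tfrac{1}{2\nu}[\lambda_X, \lambda_Y]_{\ast_\nu} = \tfrac{1}{2\nu}[\Lambda^T_X, \Lambda^T_Y]_{\ast_\nu} = \Lambda^T_{[X,Y]} = \lambda_{[X,Y]},
\end{equation*}
since $[X, Y] \in \fn$. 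This is exactly the $\fs$-covariance of $\ast_\nu$.

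For the supplementary claim on $\fm$, the input is the relations $[\fa, \fm] = 0$ and $[\fm, \fn] \subset \fn$, hence $[\fs, \fm] \subset \fn$. Taking $X \in \fs$, $Y \in \fm$, the homomorphism property combined with the first half of the corollary yields
\begin{equation*}
X^\star(\Lambda^T_Y - \lambda_Y) = \Lambda^T_{[X,Y]} - \{\lambda_X, \lambda_Y\} = \lambda_{[X,Y]} - \lambda_{[X,Y]} = 0.
\end{equation*}
Since $\{X^\star_s : X \in \fs\}$ spans $T_s\mathbb{S}$ and $\mathbb{D}$ is connected, $\Lambda^T_Y - \lambda_Y$ is a formal constant, and the normalization $\Lambda^T = \lambda + o(\nu)$ places it in $\nu\mathbb{C}\llbracket\nu\rrbracket$. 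I do not anticipate a genuine obstacle: the main bookkeeping difficulty is keeping track of the normalization $\Lambda^T = \lambda + o(\nu)$ when converting between $\mathbb{C}\llbracket\nu\rrbracket$ and $\nu\mathbb{C}\llbracket\nu\rrbracket$, and ensuring the arguments run independently of the choice of $T$, which is guaranteed by Proposition \ref{32}.
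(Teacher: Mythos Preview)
Your proof is correct and follows essentially the same approach as the paper: the paper dispatches the first part by citing Propositions~\ref{scov} and~\ref{32} (exactly the ingredients you spell out, including the sharpening on $\fn=[\fs,\fs]$ via the homomorphism property), and for the $\fm$ statement it computes $X^\star(\Lambda^T_Y)=\Lambda^T_{[X,Y]}=\lambda_{[X,Y]}$ for $X\in\fs$, $Y\in\fm$, which is precisely your argument that $X^\star(\Lambda^T_Y-\lambda_Y)=0$ followed by the normalization $\Lambda^T=\lambda+o(\nu)$.
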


\noindent The first part of this corollary is clear in view of propositions \ref{scov} and \ref{32}. The proof of the second assertion follows from a similar argument to the one use in the proof of proposition \ref{32}. In fact, we have $$X^\star\left(\Lambda^T_Y\right) =  \frac{1}{2 \nu} \left[\Lambda^{T}_X, \Lambda^T_Y\right]_{\ast_\nu} = \Lambda^T_{\left[X, Y\right]} = \lambda_{\left[X, Y\right]} \in \mathcal{C}^\infty\left(\mathbb{S}\right)$$ for all $X \in \fs$, $Y \in \fm$ and $T \in \op$ such that $T\left(\ast_\nu\right)$ is $G$-invariant. The next results follow directly from remark \ref{10} and proposition \ref{32}.

\begin{cor}\label{corsm}
Let's denote by $M \subset K$ the connected Lie subgroup of $G$ which Lie algebra $\mathfrak{m} \subset \fk$. 
Then, all the $G$-invariant star-products on $\mathbb{S}$ which are $\mathbb{S}$-equivalent to $\ast_\nu$ are $\mathbb{S} M$-equivalent. In addition, the following assertions are equivalent:
\newline $\bullet$ \,\,there exists $T \in \op$ such that $T\left(\ast_\nu\right)$ is $G$-invariant and such that $D^T_Y = Y^\star$ for each $Y \in \fm$;
\newline $\bullet$ \,\,for each $T \in \op$ such that $T\left(\ast_\nu\right)$ is $G$-invariant and for each $Y \in \fm$, we have $D^T_Y = Y^\star$;
\newline $\bullet$ \,\,the star-product $\ast_\nu$ is $\mathbb{S} M$-invariant and there exists $T \in \op$ such that $T\left(\ast_\nu\right)$ is $G$-invariant and $\mathbb{S} M$-equivalent to $\ast_\nu$.
\end{cor}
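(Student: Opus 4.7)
My plan is to treat the two assertions separately and, as the author hints, use only Remark \ref{10} (infinitesimal characterization of invariance) and Proposition \ref{32} (the intrinsic behaviour of $D^T$ on $\fs \oplus \fm = N(\fn)$), plus the fact that $M$ is connected so that $\mathbb{S} M$-equivariance can be checked at the Lie algebra level.

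For the first assertion, I would start with two operators $T, T' \in \op$ such that $T(\ast_\nu)$ and $T'(\ast_\nu)$ are both $G$-invariant, and consider the formal intertwiner $U := T' \circ T^{-1}$, which by construction satisfies $U(T(\ast_\nu)) = T'(\ast_\nu)$. Since $T$ and $T'$ commute with left translations on $\mathbb{S}$, so does $U$, hence $U$ is $\mathbb{S}$-equivariant. It remains to verify $M$-equivariance infinitesimally. By Proposition \ref{32}, $D^T_Y = D^{T'}_Y$ for each $Y \in \fm$, i.e.\ $T^{-1} \circ Y^\star \circ T = T'^{-1} \circ Y^\star \circ T'$, which rearranges to $U \circ Y^\star = Y^\star \circ U$. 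Since $M$ is connected this yields $M$-equivariance of $U$, so $U$ is an $\mathbb{S} M$-equivalence between $T(\ast_\nu)$ and $T'(\ast_\nu)$.

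For the three equivalent conditions, the implication from the universal statement to the existence statement is trivial, using Proposition \ref{fed0} to guarantee that at least one such $T$ exists. The converse (existence of one $T$ implies the property for all) is exactly Proposition \ref{32} applied to $Y \in \fm$, since $D^T_Y$ depends only on $\ast_\nu$ and not on $T$. To pass from ``$D^T_Y = Y^\star$ for $Y \in \fm$'' to the $\mathbb{S} M$-invariance of $\ast_\nu$ together with $\mathbb{S} M$-equivalence of $T$, I would argue as follows: the relation $D^T_Y = T^{-1} \circ Y^\star \circ T = Y^\star$ gives both that $Y^\star \in \Der(\ast_\nu)$ (since $D^T_Y$ is) and that $T$ commutes with $Y^\star$. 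Remark \ref{10} then upgrades the first to $\mathbb{S} M$-invariance of $\ast_\nu$ (using $\mathbb{S}$-invariance already in hand), and the second, combined with $\mathbb{S}$-equivariance of $T$, gives $\mathbb{S} M$-equivalence via the connectedness of $\mathbb{S} M$. Conversely, if $\ast_\nu$ is $\mathbb{S} M$-invariant and $T$ is an $\mathbb{S} M$-equivalence, then $T$ commutes with $Y^\star$ for $Y \in \fm$, which rearranges to $D^T_Y = Y^\star$.

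I do not anticipate any serious obstacle: the nontrivial content is all packaged in Proposition \ref{32}. The only point requiring mild care is keeping straight the passage between operator identities on $\mathcal{C}^\infty(\mathbb{D})\llbracket\nu\rrbracket$ and the geometric $\mathbb{S} M$-equivariance, which is clean because $\mathbb{S} M$ is connected and $T$ (and $U$) commute with left translations on $\mathbb{S}$ by hypothesis. Once this is noted, each implication reduces to one line.
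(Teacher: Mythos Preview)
Your argument is correct and matches the paper's intended proof: the paper itself records this corollary as following ``directly from remark \ref{10} and proposition \ref{32}'', and the unpacking you give (forming $U = T'\circ T^{-1}$ for the first assertion, and using the $T$-independence of $D^T_Y$ on $\fm$ together with Remark \ref{10} and the connectedness of $\mathbb{S}M$ for the equivalences) is exactly that direct deduction. The only extra ingredient you cite, Proposition \ref{fed0}, is already part of the standing hypotheses of the section and merely ensures the relevant set of operators $T$ is nonempty.
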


\subsection{Retractable homomorphisms}

We are now going to look at the considerations of the previous subsection from a more general point of view. Let's denote by $\mathbb{H}\left(\ast_\nu\right)$ the space of Lie algebra homomorphisms of the form $$D : \fg \rightarrow \Der\left(\ast_\nu\right) : X \mapsto D_X = X^\star + o\left(\nu\right)$$ such that $D_X = X^\star$ for each $X \in \fs$. From lemma \ref{5}, it is clear that it can be identified to the space of quantum moment maps associated with elements of $\mathbb{H}\left(\ast_\nu\right)$.

\begin{rem}\label{33}
Let $\Lambda$ and $\Lambda^\prime$ be the quantum moment maps associated with $D\in \mathbb{H}\left(\ast_\nu\right)$ and $D^\prime \in \mathbb{H}\left(\ast_\nu\right)$ respectively. An obvious adaptation of the proof of proposition \ref{32} shows that $\Lambda_X - \Lambda^\prime_X \in \nu\, \mathbb{C}\llbracket\nu\rrbracket$ for each $X \in \fs \oplus \fm$. In addition, given that $\Lambda_{\sigma\left(Y\right)}$ satisfies the equations $$X^\star\left(\Lambda_{\sigma\left(Y\right)}\right) = \Lambda_{\left[X, \sigma\left(Y\right)\right]_{\fs \oplus \fm}} + \Lambda_{\left[X, \sigma\left(Y\right)\right]_{\sigma\left(\fn\right)}} \text{\, \,with \,} \left[X, \sigma\left(Y\right)\right]_{\fs \oplus \fm} \in \fs \oplus \fm \text{ \, and \, } \left[X, \sigma\left(Y\right)\right]_{\sigma\left(\fn\right)} \in \sigma\left(\fn\right)$$ for all $X \in \fs$ and $Y \in \fn$, we deduce that the space $\mathbb{H}\left(\ast_\nu\right)$ has a structure of finite dimensional vector space over the field $\mathbb{C}\llbracket\nu\rrbracket$.
\end{rem} 

\begin{lem}\label{20}
Let's consider $D \in \mathbb{H}\left(\ast_\nu\right)$ and $S \in \Aut\left(\ast_\nu\right)$. Then we have $$D^S := S^{-1} \circ D \circ S \,\in\, \mathbb{H}\left(\ast_\nu\right).$$ In addition, the homomorphisms $D$ and $D^S$ coincide if and only if $S = \Id$.
\end{lem}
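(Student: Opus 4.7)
The plan is to verify the two assertions separately.

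For the first, since $S \in \Aut(\ast_\nu)$ is a $\mathbb{C}\llbracket\nu\rrbracket$-linear algebra automorphism of $(\mathcal{C}^\infty(\mathbb{D})\llbracket\nu\rrbracket, \ast_\nu)$, conjugation $A \mapsto S^{-1} \circ A \circ S$ sends $\Der(\ast_\nu)$ into itself and respects the commutator; hence $D^S : \fg \to \Der(\ast_\nu)$ is a Lie algebra homomorphism. Formula (\ref{11}) gives $S = \Id + O(\nu)$, so $D^S_X = X^\star + o(\nu)$ for each $X \in \fg$. Since $S \in \op$ commutes with the left translations on $\mathbb{S}$, it commutes with $X^\star$ for every $X \in \fs$, and together with $D_X = X^\star$ on $\fs$ this yields $D^S_X = X^\star$ on $\fs$. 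Hence $D^S \in \mathbb{H}(\ast_\nu)$.

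For the second, the direction $S = \Id \Rightarrow D^S = D$ is immediate. Conversely, assume $D^S = D$, equivalently $[D_X, S] = 0$ for every $X \in \fg$. By (\ref{11}), write $S = \exp(\tilde{D})$ with $\tilde{D} = \frac{1}{2\nu}[\nu\Lambda, -]_{\ast_\nu}$ for some $\Lambda \in \mathcal{C}^\infty(\mathbb{S})\llbracket\nu\rrbracket$. The plan is to show by induction on $k$ that every coefficient of $\Lambda = \sum_k \nu^k \Lambda_k$ is a constant function; since elements of $\mathbb{C}\llbracket\nu\rrbracket$ are $\ast_\nu$-central, this forces $\tilde{D} = 0$ and hence $S = \Id$.

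Let $k \geq 0$ be the smallest index with $\Lambda_k \notin \mathbb{C}$, a situation to be ruled out. Writing $\Lambda = c_0 + \nu c_1 + \dots + \nu^{k-1} c_{k-1} + \nu^k \Lambda_k + O(\nu^{k+1})$ with $c_j \in \mathbb{C}$, the centrality of constants and the identity $[\Lambda_k, f]_{\ast_\nu} = 2\nu\{\Lambda_k, f\} + O(\nu^2)$ give $\tilde{D} = \nu^{k+1}\{\Lambda_k, -\} + O(\nu^{k+2})$, so $S = \Id + \nu^{k+1}\{\Lambda_k, -\} + O(\nu^{k+2})$. Extracting the coefficient of $\nu^{k+1}$ in $[D_X, S] = 0$, using $D_X = X^\star + O(\nu)$ and that every $X \in \fg$ acts by symplectomorphisms on $\mathbb{D}$ (so $[X^\star, \{\Lambda_k, -\}] = \{X^\star(\Lambda_k), -\}$), one finds $\{X^\star(\Lambda_k), -\} = 0$, whence $X^\star(\Lambda_k) \in \mathbb{C}$ for every $X \in \fg$ on the connected manifold $\mathbb{D}$.

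Semi-simplicity of $\fg$ closes the argument: $\fg = [\fg, \fg]$ expresses each $X \in \fg$ as a sum of commutators $[Y, Z]$, and $[Y, Z]^\star(\Lambda_k) = Y^\star(Z^\star(\Lambda_k)) - Z^\star(Y^\star(\Lambda_k)) = 0$ since the inner quantities are already constants. Thus $\Lambda_k$ is $\fg$-invariant, so constant on $\mathbb{D}$ by transitivity of the $G$-action, contradicting the choice of $k$. The main obstacle I anticipate is the clean order-by-order bookkeeping in the inductive step; the rest rests on (\ref{11}), lemma \ref{intder}, and the semi-simplicity of $\fg$.
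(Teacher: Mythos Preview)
Your proof is correct and takes a genuinely different route from the paper's.

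For the first assertion, your argument matches the paper's one-line justification. For the second, the paper proceeds via Lemma~\ref{17}: it takes the quantum moment map $\Lambda$ attached to $D$, fixes $\lambda\in\Sigma^+$ and $X\in\fg_\lambda$, and computes explicitly
\[
\bigl(D_{\sigma(X)}-D^S_{\sigma(X)}\bigr)(\Lambda_X)\;=\;-\,\nu\,\beta\bigl(X,\sigma(X)\bigr)\,\gamma^{S^{-1}}_{H_\lambda},
\]
using the Jacobi identity, formula~(\ref{1}), and the structure $X^\star(\Lambda')=0$, $H^\star(\Lambda')\in\mathbb{C}\llbracket\nu\rrbracket$ coming from Lemma~\ref{17}. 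Since the $H_\lambda$ span $\fa$, vanishing of the left side forces $\gamma^{S^{-1}}=0$ and hence $S=\Id$.

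Your argument bypasses Lemma~\ref{17} and the root space decomposition entirely. You extract the leading nonconstant coefficient $\Lambda_k$ of the inner generator, observe that at order $\nu^{k+1}$ the commutation $[D_X,S]=0$ reads $\{X^\star(\Lambda_k),-\}=0$ for every $X\in\fg$, and then kill $\Lambda_k$ using $\fg=[\fg,\fg]$ together with transitivity of the $G$-action. This is more elementary and more portable: it only needs formula~(\ref{11}), that the $\fg$-action is symplectic and transitive, and semi-simplicity. What the paper's computation buys, on the other hand, is an explicit formula linking $D-D^S$ to the parameter $\gamma^{S^{-1}}\in\Hom(\fa,\mathbb{C}\llbracket\nu\rrbracket)$; this quantitative control is exactly what feeds into Proposition~\ref{34} and Remark~\ref{rg}, where the parametrization of retractable homomorphisms is read off from the $\Aut(\ast_\nu)$-action. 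Your argument proves the lemma cleanly but does not directly yield that correspondence.
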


\begin{proof}
The first assertion is clear as $S = \Id + o\left(\nu\right) \in \Aut\left(\ast_\nu\right)$ commutes with the left translations on $\mathbb{S}$; \cite[Ch.\,2, lem.\,2.3.3]{Ko14}. Let $\Lambda$ be the quantum moment map associated with $D$ and let's consider $X \in \fg_\lambda$ for $\lambda \in \Sigma^+$. As $S$ is a $\mathbb{S}$-automorphism of $\ast_\nu$, we have
$$\left(D_{\sigma\left(X\right)} - D^S_{\sigma\left(X\right)}\right)\left(\Lambda_X\right) \,\,\,=\,\,\, \frac{1}{2 \nu} \left[\Lambda_{\sigma\left(X\right)} - S^{-1}\left(\Lambda_{\sigma\left(X\right)}\right), \Lambda_X\right]_{\ast_\nu} \,\,=\,\, - \,X^\star\left(\Lambda_{\sigma\left(X\right)} - S^{-1}\left(\Lambda_{\sigma\left(X\right)}\right)\right).$$
Lemma \ref{17} allows us to choose $\Lambda^\prime \in \mathcal{C}^\infty\left(\mathbb{S}\right)\llbracket\nu\rrbracket$ which is induced by the data of $S^{-1}$ through the relation $$S^{-1} = \exp\left(\frac{1}{2 \nu} \left[\nu \Lambda^\prime, -\right]_{\ast_\nu}\right).$$ In particular, we have $X^\star\left(\Lambda^\prime\right) = 0$ and $\gamma^{S^{-1}}_H := H^\star\left(\Lambda^\prime\right) \in \mathbb{C}\llbracket\nu\rrbracket$ for each $H \in \fa$. By combining these relations with the Jacobi identity and the expression (\ref{1}), we obtain
\begin{eqnarray}\nonumber
X^\star\left(\frac{1}{2 \nu} \left[\nu \Lambda^\prime, \Lambda_{\sigma\left(X\right)}\right]_{\ast_\nu}\right) &=& - \frac{1}{2 \nu} \left[\nu \Lambda^\prime, \frac{1}{2 \nu} \left[\Lambda_{\sigma\left(X\right)}, \Lambda_{X}\right]_{\ast_\nu} \right]_{\ast_\nu} - \frac{1}{2 \nu} \left[\Lambda_{\sigma\left(X\right)}, \frac{1}{2 \nu} \left[\Lambda_{X}, \nu \Lambda^\prime\right]_{\ast_\nu} \right]_{\ast_\nu} \\Ê\nonumber &=& - \frac{1}{2 \nu} \left[\nu \Lambda^\prime, \Lambda_{\left[\sigma\left(X\right), X\right]}\right]_{\ast_\nu} - \frac{1}{2 \nu} \left[\Lambda_{\sigma\left(X\right)}, \nu X^\star\left(\Lambda^\prime\right)\right]_{\ast_\nu} \\Ê\nonumber &=& - \,\nu\, \beta\left(X, \sigma\left(X\right)\right) \gamma^{S^{-1}}_{H_\lambda} \in \mathbb{C}\llbracket\nu\rrbracket.
\end{eqnarray}
An inductive approach shows that $$X^\star\left(\left(\frac{1}{2 \nu} \left[\nu \Lambda^\prime, -\right]_{\ast_\nu}\right)^k \left(\Lambda_{\sigma\left(X\right)}\right)\right) = 0$$ for each integer $k > 1$. As a consequence, we get $$\left(D_{\sigma\left(X\right)} - D^S_{\sigma\left(X\right)}\right)\left(\Lambda_X\right) = - \,\nu \,\beta\left(X, \sigma\left(X\right)\right) \gamma^{S^{-1}}_{H_\lambda}.$$ Since $\left\{H_\lambda : \lambda \in \Sigma^+\right\}$ spans $\fa$ and $\beta\left(X, \sigma\left(X\right)\right) < 0$, it is clear that the map $\gamma^{S^{-1}}: \fa \mapsto \mathbb{C}\llbracket\nu\rrbracket$ vanishes identically if $D = D^S$. As $S = \Id$ if and only if $\gamma^{S^{-1}} = 0$, the proof is complete.
\end{proof}

\noindent In this section, we will be interested in a particular class of elements in $\mathbb{H}\left(\ast_\nu\right)$ which appeared previously.

\begin{df}
For $D \in \mathbb{H}\left(\ast_\nu\right)$, an operator $T \in \op$ such that $T\left(\ast_\nu\right)$ is a $G$-invariant star-product on $\mathbb{D}$ is called a \emph{$D$-retract} if $D = D^T$. We say that $D \in \mathbb{H}\left(\ast_\nu\right)$ is \emph{retractable} when $D$ admits a $D$-retract. 
\end{df}

\begin{prop}\label{34}
The space of retractable homomorphisms of \,$\mathbb{H}\left(\ast_\nu\right)$ is parametrized by the space of formal power series with coefficients in \,$\mathbb{C}^r \oplus Z^2\left(\mathbb{D}\right)^G$.
\end{prop}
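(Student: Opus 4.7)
The plan is to establish a bijection between the retractable homomorphisms in $\mathbb{H}\left(\ast_\nu\right)$ and pairs $\left(\omega_\nu, \gamma_\nu\right) \in Z^2\left(\mathbb{D}\right)^G\llbracket\nu\rrbracket \oplus \mathbb{C}^r\llbracket\nu\rrbracket$ by combining the $G$-equivalence class of the $G$-invariant star-product $T\left(\ast_\nu\right)$ reached by a $D$-retract $T$ with the $\mathbb{S}$-automorphism through which $T$ is reparametrized inside this class.

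First I would define a map $\Phi_1$ sending a retractable $D$ to the $G$-equivalence class of $T\left(\ast_\nu\right)$ for any $D$-retract $T$. The key point is the well-definedness of this class. For two $D$-retracts $T$ and $T^\prime$, the operator $W := T \circ \left(T^\prime\right)^{-1}$ is an $\mathbb{S}$-equivalence from $T^\prime\left(\ast_\nu\right)$ to $T\left(\ast_\nu\right)$. Using the equality $D^T = D^{T^\prime}$, one computes
$$W \circ X^\star \,=\, T \circ \left(\left(T^\prime\right)^{-1} \circ X^\star \circ T^\prime\right) \circ \left(T^\prime\right)^{-1} \,=\, T \circ D^{T^\prime}_X \circ \left(T^\prime\right)^{-1} \,=\, T \circ D^T_X \circ \left(T^\prime\right)^{-1} \,=\, X^\star \circ W$$
for all $X \in \fg$, showing that $W$ is in fact a $G$-equivalence. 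By lemma \ref{Gequiv}, the class $\left[T\left(\ast_\nu\right)\right]$ therefore corresponds to a unique $\omega_\nu \in Z^2\left(\mathbb{D}\right)^G\llbracket\nu\rrbracket$ that only depends on $D$.

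Next I would show that $\Phi_1$ is surjective. For any $\omega_\nu \in Z^2\left(\mathbb{D}\right)^G\llbracket\nu\rrbracket$, the Fedosov star-product $\ast^{\left(\nabla, \,\omega_\nu\right)}_{\nu}$ is $G$-invariant and, by lemma \ref{GclSex} combined with the Fedosov equivalence criterion used to prove proposition \ref{fed0}, it is $\mathbb{S}$-equivalent to $\ast^{\left(\nabla, 0\right)}_{\nu}$, hence to our fixed reference $\ast_\nu$. Any $T_{\omega_\nu} \in \op$ with $T_{\omega_\nu}\left(\ast_\nu\right) = \ast^{\left(\nabla, \,\omega_\nu\right)}_{\nu}$ then makes $D_{\omega_\nu} := D^{T_{\omega_\nu}}$ a retractable element of $\mathbb{H}\left(\ast_\nu\right)$ satisfying $\Phi_1\left(D_{\omega_\nu}\right) = \omega_\nu$.

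Finally, I would analyze the fibers of $\Phi_1$. Fix $\omega_\nu$ and such a $T_{\omega_\nu}$. For any retractable $D$ with $\Phi_1\left(D\right) = \omega_\nu$ and any $D$-retract $T$, a $G$-equivalence $U$ between $T_{\omega_\nu}\left(\ast_\nu\right)$ and $T\left(\ast_\nu\right)$ provides two elements $T$ and $U \circ T_{\omega_\nu}$ of $\op$ yielding the same star-product $T\left(\ast_\nu\right)$; they therefore differ by an $\mathbb{S}$-automorphism, which gives $T = U \circ T_{\omega_\nu} \circ S$ for some $S \in \Aut\left(\ast_\nu\right)$. Since $U$ commutes with every $X^\star$, this yields $D = D^T = S^{-1} \circ D_{\omega_\nu} \circ S = D_{\omega_\nu}^S$, and lemma \ref{20} guarantees that $S$ is then uniquely determined by $D$. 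Combined with the identification $\Aut\left(\ast_\nu\right) \simeq \mathbb{C}^r\llbracket\nu\rrbracket$ of lemma \ref{17}, this produces the announced bijection $D \longleftrightarrow \left(\omega_\nu, \gamma^S\right)$. The main obstacle in this strategy is the well-definedness of $\Phi_1$; once the $G$-equivariance of $W$ is recognized, the rest of the argument reduces to assembling facts already proven in the previous sections.
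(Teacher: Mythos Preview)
Your argument is correct and follows essentially the same route as the paper: partition the retractable homomorphisms according to the $G$-equivalence class of $T\left(\ast_\nu\right)$ (parametrized by $Z^2\left(\mathbb{D}\right)^G\llbracket\nu\rrbracket$ via lemma \ref{Gequiv}), and then identify each fiber with $\Aut\left(\ast_\nu\right) \simeq \mathbb{C}^r\llbracket\nu\rrbracket$ through lemmas \ref{31}, \ref{17} and \ref{20}. Your explicit verification that any two $D$-retracts yield $G$-equivalent star-products (the $G$-equivariance of $W$) makes the disjointness of these fibers transparent, a point the paper leaves implicit.
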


\begin{proof}
Let's fix $T \in \op$ such that $T\left(\ast_\nu\right)$ is a $G$-invariant star-product on $\mathbb{D} \simeq \mathbb{S}$. Let's consider $\mathbb{H}^T\left(\ast_\nu\right)$ the space of retractable homomorphisms $D \in \mathbb{H}\left(\ast_\nu\right)$ of the form $D = D^{T^\prime}$ for $T^{\prime} \in \op$ such that $T^{\prime}\left(\ast_\nu\right)$ is $G$-invariant and $G$-equivalent to $T\left(\ast_\nu\right)$. From lemma \ref{31}, we know that $\mathbb{H}^T\left(\ast_\nu\right)$ coincides with the space of homomorphisms $D \in \mathbb{H}\left(\ast_\nu\right)$ defined by $$D : X \in \fg\, \longmapsto\, D_X = S^{-1} \circ T^{-1} \circ U^{-1} \circ X^\star \circ U \circ T \circ S$$ where $U$ is a $G$-equivalence of $G$-invariant star-products on $\mathbb{S}$ and $S$ a $\mathbb{S}$-automorphism of $\ast_\nu$. Given that the operator $U$ commutes with the action of $G$ on $\mathbb{S}$ in this last expression, we get $$\mathbb{H}^T\left(\ast_\nu\right) = \left\{S^{-1} \circ D^T \circ S \,:\, S \in \Aut\left(\ast_\nu\right)\right\}.$$ Therefore, we deduce from lemmas \ref{17} and \ref{20} that $\mathbb{H}^T\left(\ast_\nu\right)$ is parametrized by $\mathbb{C}^r\llbracket\nu\rrbracket$. The proof is then complete in view of lemma \ref{Gequiv}.
\end{proof}

\begin{rem}\label{rg}
In view of this proof, it becomes clear that this statement describes a parametrization of the space of retractable homomorphisms of $\mathbb{H}\left(\ast_\nu\right)$ by a choice of $\mathbb{S}$-automorphism of $\ast_\nu$ and a choice of $G$-equivalence class of $G$-invariant star-products on $\mathbb{D}$. In particular, we can easily deduce from proposition \ref{32} and expression (\ref{actautS}) that this parametrization of $\mathbb{H}^T\left(\ast_\nu\right)$ by a choice of $\mathbb{S}$-automorphism of $\ast_\nu$ is associated to a parametrization by a choice of a linear map $\Lambda : \fs \rightarrow \mathcal{C}^\infty\left(\mathbb{D}\right)\llbracket\nu\rrbracket$ given in proposition \ref{pqmm}.
\end{rem}

\noindent The following corollaries can be deduced easily from proposition \ref{irr}, remark \ref{33} and proposition \ref{34}.

\begin{cor}\label{corirrr}
As a Hermitian symmetric space of non compact type, if the symmetric bounded domain $\mathbb{D}$ is irreducible, then the space of retractable homomorphisms of \,$\mathbb{H}\left(\ast_\nu\right)$ is parametrized by $\mathbb{C}^{r+1}\llbracket\nu\rrbracket$.
\end{cor}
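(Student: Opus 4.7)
The corollary reduces to a straightforward combination of Proposition \ref{34} with the irreducibility input from Proposition \ref{irr}, together with the linear structure provided by Remark \ref{33}. The plan is simply to trace through the parametrization of Proposition \ref{34} in the irreducible setting.

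First I would recall the conclusion of Proposition \ref{34}, which identifies the space of retractable homomorphisms of $\mathbb{H}\left(\ast_\nu\right)$ with the space of formal power series with coefficients in $\mathbb{C}^r \oplus Z^2\left(\mathbb{D}\right)^G$. The factor $\mathbb{C}^r$ originates from the parametrization of $\Aut\left(\ast_\nu\right)$ obtained in Lemma \ref{17}, while the factor $Z^2\left(\mathbb{D}\right)^G$ records the $G$-equivalence class of the resulting $G$-invariant star-product, in accordance with Lemma \ref{Gequiv}.

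The only remaining ingredient is the computation of $Z^2\left(\mathbb{D}\right)^G$ under the irreducibility hypothesis. This is exactly the content of Proposition \ref{irr}: when $\mathbb{D}$ is an irreducible Hermitian symmetric space of non compact type, the K\"ahlerian symplectic form $\omega$ generates $Z^2\left(\mathbb{D}\right)^G$ as a one-dimensional $\mathbb{C}$-vector space. Therefore $\mathbb{C}^r \oplus Z^2\left(\mathbb{D}\right)^G \,\simeq\, \mathbb{C}^{r+1}$ as complex vector spaces, and the corresponding identification carries over coefficient-wise to formal power series in $\nu$.

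Combining these two facts yields the parametrization of retractable homomorphisms by $\mathbb{C}^{r+1}\llbracket\nu\rrbracket$. Remark \ref{33} plays a supporting role, ensuring that the relevant structures are genuinely finite-dimensional over $\mathbb{C}\llbracket\nu\rrbracket$, so that the direct sum and the passage to formal power series are unambiguous. No real obstacle arises; the only verification worth making explicit is that the isomorphism $Z^2\left(\mathbb{D}\right)^G \simeq \mathbb{C}$ from Proposition \ref{irr} is compatible with the $\mathbb{C}\llbracket\nu\rrbracket$-module structure used in Proposition \ref{34}, which is immediate since $\mathbb{C}\llbracket\nu\rrbracket$ acts on both sides by scalar multiplication on coefficients.
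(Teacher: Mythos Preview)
Your argument is correct and follows exactly the route indicated by the paper, which simply states that the corollary is deduced easily from Proposition~\ref{irr}, Remark~\ref{33} and Proposition~\ref{34}. You have merely spelled out the combination $\mathbb{C}^r \oplus Z^2(\mathbb{D})^G \simeq \mathbb{C}^{r+1}$ under the irreducibility hypothesis, which is precisely the intended deduction.
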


\begin{cor}\label{corirrr2}
The dimension of \,$\mathbb{H}\left(\ast_\nu\right)$ as vector space over \,$\mathbb{C}\llbracket\nu\rrbracket$ is greater than $r + \dim\left(Z^2\left(\mathbb{D}\right)^G\right)$ and this lower bound is reached if and only if each homomorphism of \,$\mathbb{H}\left(\ast_\nu\right)$ is retractable.
\end{cor}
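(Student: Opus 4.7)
The plan is to deduce this corollary from Proposition \ref{34} together with the $\mathbb{C}\llbracket\nu\rrbracket$-vector space structure on $\mathbb{H}\left(\ast_\nu\right)$ provided by Remark \ref{33}. By Proposition \ref{34}, the set of retractable homomorphisms is in bijection with the space of formal power series with coefficients in $\mathbb{C}^r \oplus Z^2\left(\mathbb{D}\right)^G$, which is a free $\mathbb{C}\llbracket\nu\rrbracket$-module of rank $r + \dim\left(Z^2\left(\mathbb{D}\right)^G\right)$.

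First I would verify that this bijection identifies the retractable locus with a $\mathbb{C}\llbracket\nu\rrbracket$-subspace of $\mathbb{H}\left(\ast_\nu\right)$. The parametrization of the retractables has two independent sources: the choice of an $\mathbb{S}$-automorphism of $\ast_\nu$, which by Lemma \ref{17} and formula (\ref{actautS}) acts linearly on the restriction to $\fa$ of the associated quantum moment map; and the choice of a $G$-equivalence class of $G$-invariant star-products on $\mathbb{D}$, which by Lemma \ref{Gequiv} corresponds linearly to an element of $Z^2\left(\mathbb{D}\right)^G\llbracket\nu\rrbracket$. Matching these two parameters with the data that determines a general element of $\mathbb{H}\left(\ast_\nu\right)$ modulo a fixed base point, namely its restriction to $\fa \oplus Z\left(\fm\right)$ extended to $\sigma\left(\fn\right)$ by the recursion indicated in Remark \ref{33}, should show that the retractable locus is a free $\mathbb{C}\llbracket\nu\rrbracket$-submodule of the announced rank.

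The inequality $\dim_{\mathbb{C}\llbracket\nu\rrbracket}\mathbb{H}\left(\ast_\nu\right) \geq r + \dim\left(Z^2\left(\mathbb{D}\right)^G\right)$ and the equivalence of the equality case with every homomorphism of $\mathbb{H}\left(\ast_\nu\right)$ being retractable then follow from elementary linear algebra over $\mathbb{C}\llbracket\nu\rrbracket$. The main obstacle I anticipate lies in carrying out the linearity check cleanly: the vector space structure on $\mathbb{H}\left(\ast_\nu\right)$ is described only implicitly in Remark \ref{33} through differences of quantum moment maps on $\fs \oplus \fm$ and a recursion along $\sigma\left(\fn\right)$, whereas Proposition \ref{34} parametrizes the retractables via the operator-theoretic data of $\mathbb{S}$-automorphisms and $G$-equivalences, so reconciling these two descriptions to deduce genuine $\mathbb{C}\llbracket\nu\rrbracket$-linearity of the identification (rather than mere set-theoretic bijectivity) is the delicate point.
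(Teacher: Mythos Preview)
Your approach is essentially the one the paper intends: the corollary is stated immediately after Proposition \ref{34} and Remark \ref{33} with the single sentence that it ``can be deduced easily'' from these (together with Proposition \ref{irr} for the companion corollary), and no further argument is given. You have correctly identified the two ingredients --- the $\mathbb{C}\llbracket\nu\rrbracket$-vector space structure of $\mathbb{H}\left(\ast_\nu\right)$ from Remark \ref{33} and the parametrization of the retractable locus from Proposition \ref{34} --- and the elementary linear-algebra conclusion.

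The honest caveat you raise about linearity is well placed and in fact goes beyond what the paper spells out. Note that the parametrization in the proof of Proposition \ref{34} is not, strictly speaking, linear: for a fixed $T$, the set $\mathbb{H}^T\left(\ast_\nu\right)$ is an \emph{orbit} under conjugation by $\Aut\left(\ast_\nu\right)$, and formula (\ref{actautS}) shows this acts by \emph{affine} translation $\Lambda_H \mapsto \Lambda_H - \nu\,\gamma^S_H$ on the quantum moment map restricted to $\fa$. So the retractable locus is naturally an affine family of affine slices rather than a linear subspace on the nose. What rescues the dimension count is that the parametrization is injective (Lemma \ref{20} and the proof of Proposition \ref{34}) and that, order by order in $\nu$, the freedom in a general element of $\mathbb{H}\left(\ast_\nu\right)$ is governed by finitely many complex parameters determined by the recursion of Remark \ref{33}; an injective polynomial (indeed affine) map from a free $\mathbb{C}\llbracket\nu\rrbracket$-module of rank $r+\dim\left(Z^2\left(\mathbb{D}\right)^G\right)$ into a finite-dimensional $\mathbb{C}\llbracket\nu\rrbracket$-space forces the target dimension to be at least that rank. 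The paper suppresses this point entirely, so your flag is a genuine refinement rather than a gap in your own argument.
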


\subsection{PDE's for the retract}

\noindent We now describe our quantization method based on the above-mentioned \emph{retract method}, as well as its interaction with tools that we developed for simplifying underlied computations.

\vspace{-4 mm}

\begin{enumerate}[(i)]
\item \label{eni} The first step is to fix explicitly a linear map $\Lambda : \fs \rightarrow \mathcal{C}^\infty\left(\mathbb{D}\right)\llbracket\nu\rrbracket$ given by proposition \ref{pqmm}. In virtue of proposition \ref{scov}, if the star-product $\ast_\nu$ is $\fs$-covariant, we can simply choose $\Lambda = \lambda$.
\item \label{enii} Then, we compute the set of quantum moment maps that coincide on $\fs$ with the map $\Lambda$ chosen in (\ref{eni}) and that are associated with retractable homomorphisms of $\mathbb{H}\left(\ast_\nu\right)$. The best way for doing that is to solve the equations of remark \ref{33}. The corollaries \ref{corcov} and \ref{corsm} can be helpful if the star-product $\ast_\nu$ is either $\fs$-covariant or $\mathbb{S} M$-invariant.
\begin{rem}\label{2020}
From remark \ref{rg}, we note that the space of such quantum moment maps is parametrized by $Z^2\left(\mathbb{D}\right)^G\llbracket\nu\rrbracket$. In particular, a choice of parameter $\omega_\nu \in Z^2\left(\mathbb{D}\right)^G\llbracket\nu\rrbracket$ will correspond to a choice of $G$-equivalence class of $G$-invariant star-products comprising the Fedosov star-product $\ast_\nu^{\left(\nabla, \omega_\nu\right)}$.
\end{rem}
\item \label{eniii} For each retractable homomorphism $D \in \mathbb{H}\left(\ast_\nu\right)$ associated to a quantum moment map obtained in (\ref{enii}), we compute the set of $D$-retract, ie. the set of invertible linear convolution operator $T \in \Op^{\mathbb{S}}\left(\ast_\nu\right)$ of the form (\ref{conv}) such that $T \circ D_X \circ T^{-1} = X^\star$ for all $X \in \fg$.
\begin{rem}\label{2017}
If the operator $T$ satisfies this last equation for two elements in $\fg$, then it satisfies also this equation for any multiple of these two elements and for the Lie bracket of these two elements.
\end{rem}
\end{enumerate}

\vspace{-2 mm}

Proposition \ref{fed0}, lemma \ref{31}, lemma \ref{20}, proof of proposition \ref{34} and, more generally, the whole material developed through this section, lead us to the following major result.

\begin{thm}
This method yields the set of $G$-invariant star-products on $\mathbb{D}$. In particular, the set of $D$-retract for a retractable homomorphism $D \in \mathbb{H}\left(\ast_\nu\right)$ associated to a quantum moment map from (\ref{enii}) generates exactly a $G$-equivalence class of $G$-invariant star-products on $\mathbb{D}$.
\end{thm}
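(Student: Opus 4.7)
The plan is to derive the theorem from the structural results of this section by splitting it into two assertions: (a) every $G$-invariant star-product on $\mathbb{D}$ arises as $T(\ast_\nu)$ for some $D$-retract $T$ produced by steps (\ref{eni})--(\ref{eniii}), and (b) for each retractable homomorphism $D \in \mathbb{H}(\ast_\nu)$ furnished by step (\ref{enii}), the set $\{T(\ast_\nu) : T \text{ is a } D\text{-retract}\}$ coincides with exactly one $G$-equivalence class of $G$-invariant star-products on $\mathbb{D}$.

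I would prove assertion (b) first. If $T$ and $T'$ are both $D$-retracts, the operator $U := T' \circ T^{-1}$ has identity as its $0^{\text{th}}$ order term in $\nu$, sends $T(\ast_\nu)$ to $T'(\ast_\nu)$, and satisfies $U \circ X^\star = X^\star \circ U$ for every $X \in \fg$ because $T^{-1} \circ X^\star \circ T = D_X = (T')^{-1} \circ X^\star \circ T'$. Hence $U$ is a $G$-equivalence between $T(\ast_\nu)$ and $T'(\ast_\nu)$, so the two star-products sit in a common $G$-equivalence class. Conversely, if $\ast''_\nu$ is $G$-equivalent to some $T_0(\ast_\nu)$ via a $G$-equivalence $U$, then $U \circ T_0$ remains a $D$-retract, since $U$ commutes with every $X^\star$, and it realizes $\ast''_\nu$. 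This gives (b).

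For assertion (a), I would start from an arbitrary $G$-invariant star-product $\ast'_\nu$. Proposition \ref{fed0} says that $\ast'_\nu$ and $\ast_\nu$ are both $\mathbb{S}$-equivalent to $\ast^{(\nabla, 0)}_\nu$, hence mutually $\mathbb{S}$-equivalent, so there exists $T \in \Op^{\mathbb{S}}(\ast_\nu)$ with $T(\ast_\nu) = \ast'_\nu$. Then $D^T \in \mathbb{H}(\ast_\nu)$ is retractable with $T$ as witness. It remains to normalize $T$ into $T \circ S$ for a suitable $S \in \Aut(\ast_\nu)$ so that the quantum moment map of the conjugated homomorphism $S^{-1} \circ D^T \circ S$ coincides on $\fs$ with the map $\Lambda$ selected in step (\ref{eni}). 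By remark \ref{rem}, the only freedom to adjust lies on $\fa$, where $\Lambda^T|_{\fa}$ differs from $\Lambda|_{\fa}$ by a formal $r$-tuple of constants; lemma \ref{17} and the shift formula (\ref{actautS}) furnish a unique $S$ absorbing this discrepancy. The conjugated homomorphism then appears among those enumerated in step (\ref{enii}), $T \circ S$ is a retract for it, and $(T \circ S)(\ast_\nu) = T(S(\ast_\nu)) = T(\ast_\nu) = \ast'_\nu$ since $S$ is an $\mathbb{S}$-automorphism of $\ast_\nu$.

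The main technical hurdle I anticipate is the bookkeeping in (a): one must verify carefully how the adjustment by $S$ permutes retractable homomorphisms and check that the parametrization of step (\ref{enii}) by $Z^2(\mathbb{D})^G\llbracket\nu\rrbracket$ established in the proof of proposition \ref{34} --- combined with remark \ref{2020} and lemma \ref{Gequiv} --- genuinely matches $G$-equivalence classes, so that each such class is produced exactly once by the method. Once this is settled, the surjectivity of the method and assertion (b) follow as direct combinations of lemmas \ref{20}, \ref{31}, and proposition \ref{32}.
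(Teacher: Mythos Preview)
Your proposal is correct and follows essentially the same approach the paper indicates. The paper itself offers no explicit proof, merely stating that the theorem is a consequence of proposition \ref{fed0}, lemma \ref{31}, lemma \ref{20} and the proof of proposition \ref{34}; your argument is precisely the detailed working-out of how these ingredients combine, with the normalization by $S \in \Aut(\ast_\nu)$ via lemma \ref{17} and (\ref{actautS}) making explicit the content of remark \ref{rg}.
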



In order to reach the objectives stated in our introduction, we need to develop tools for the resolution of the equations expressed in point (\ref{eniii}) of the method. An important step in this direction lies in the following theorem which was firstly proved for $r = 1$ in collaboration with Bieliavsky ; \cite[Ch.\,2, thm.\,2.5.10]{Ko14}.

\begin{thm}\label{thm} 
Let's consider a retractable homomorphism $D \in \mathbb{H}\left(\ast_\nu\right)$. An operator $T \in \Op^{\mathbb{S}}\left(\ast_\nu\right)$ is a $D$-retract if and only if it is the inverse of a convolution operator of the form (\ref{conv}) such that its kernel \,$v_T := u_{T^{-1}} \in \mathcal{D}^\prime\left(\mathbb{S}\right)\llbracket\nu\rrbracket$ satisfies the partial differential equation 
\begin{eqnarray} \label{eq}
D_X\left(v_T\right) = \left(\left[X\right]_\mathfrak{s}\right)^\star\left(v_T\right)
\end{eqnarray} 
for each $X \in \fg$.
\end{thm}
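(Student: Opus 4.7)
The plan is to convert the operator definition of a $D$-retract, $D_X\circ T^{-1} = T^{-1}\circ X^\star$ for every $X \in \fg$, into an identity between integral kernels, and then reduce that kernel identity to the one-variable PDE (\ref{eq}). Since $T^{-1}$ lies in $\op$ and therefore commutes with left translations on $\mathbb{S}$, it is automatically a left convolution operator of the form (\ref{conv}) with kernel $v_T := u_{T^{-1}}$.

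I would first express both sides of $D_X\circ T^{-1} = T^{-1}\circ X^\star$ as integrals in $s$ against a test function $f$. Commuting the formal series of differential operators $D_X$ acting in $s_0$ with the integral yields
\[
(D_X\circ T^{-1})(f)(s_0) = \int_\mathbb{S} D_X^{s_0}\bigl[v_T(s^{-1}s_0)\bigr]\, f(s)\, ds.
\]
For the other side, the key computation uses the pointwise formula $X^\star_s = -(L_s)_*\,[\Ad_{s^{-1}}X]_\fs$, which follows from $\exp(-tX)\,s = s\,\exp(-t\Ad_{s^{-1}}X)$ combined with (\ref{2}), together with an integration by parts in $s$, giving
\[
(T^{-1}\circ X^\star)(f)(s_0) = \int_\mathbb{S} \bigl([\Ad_{s^{-1}}X]_\fs\bigr)^\star v_T(s^{-1}s_0)\, f(s)\, ds.
\]
The divergence term vanishes because $X^\star$ is divergence-free with respect to $ds$: the left Haar measure on $\mathbb{S}$ and the $G$-invariant Liouville measure on $\mathbb{D}\simeq\mathbb{S}$ are both $\mathbb{S}$-invariant measures on $\mathbb{S}$, hence proportional, and $G$ preserves the symplectic volume. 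Equating these integrals for arbitrary $f$ is equivalent to the two-variable kernel identity
\[
D_X^{s_0}\bigl[v_T(s^{-1}s_0)\bigr] = \bigl([\Ad_{s^{-1}}X]_\fs\bigr)^\star v_T(s^{-1}s_0), \qquad \forall\, s, s_0 \in \mathbb{S},\; \forall X \in \fg.
\]

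Specializing $s = \Id$ in this two-variable identity recovers exactly (\ref{eq}), yielding the direction \emph{``$T$ is a $D$-retract $\Rightarrow$ $v_T$ satisfies (\ref{eq})''}. For the converse, the retractability of $D$ is essential. Fixing any $D$-retract $T_0$ (which exists by hypothesis), using that $T_0$ commutes with every left translation $L_s$, and invoking the general identity $L_s^{-1}\circ Y^\star\circ L_s = (\Ad_{s^{-1}}Y)^\star$ valid for all $Y \in \fg$, I would derive the covariance relation $L_s^{-1}\circ D_X\circ L_s = D_{\Ad_{s^{-1}}X}$. Applying the PDE (\ref{eq}) with $\Ad_{s^{-1}}X$ in place of $X$ and combining with this covariance plus the commutation of $L_s$ with $Y^\star$ for $Y \in \fs$ then reconstructs the two-variable identity, which by the kernel computation above is equivalent to the operator equation defining a $D$-retract.

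The hardest part will be the integration-by-parts step: one has to verify that the fundamental vector field $X^\star$, which is not left-invariant for $X \notin \fs$, is nonetheless divergence-free with respect to $ds$, and to correctly extract the point-dependent coefficient $[\Ad_{s^{-1}}X]_\fs$ that appears. Once this computation is carried out carefully, the equivalence between the operator, kernel, and PDE conditions reduces to clean bookkeeping using the Iwasawa decomposition $\fg = \fs \oplus \fk$.
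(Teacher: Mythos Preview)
Your argument is correct and follows essentially the same route as the paper: reduce the operator identity $D_X \circ T^{-1} = T^{-1} \circ X^\star$ to a two-variable kernel identity via integration by parts, compute each side, and show equivalence with the one-variable PDE (\ref{eq}). The paper's computation of $-X^\star_s(v_T(s^{-1}s_0))$ in (\ref{retract1}) is exactly your pointwise formula $X^\star_s = -(L_s)_*[\Ad_{s^{-1}}X]_\fs$ unwound, and your explicit justification of the integration-by-parts step via divergence-freeness of $X^\star$ (using that the left Haar measure is proportional to the Liouville measure) is more careful than the paper, which merely invokes the step ``formally''.

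The one genuine difference is in how the covariance $L_s^\star \circ D_X = D_{\Ad_{s^{-1}}X} \circ L_s^\star$ is established for the converse direction. The paper obtains it intrinsically from the quantum moment map: since $D \in \mathbb{H}(\ast_\nu)$, lemma~\ref{5} gives $D_X = \frac{1}{2\nu}[\Lambda_X,-]_{\ast_\nu}$, and the relation $Y^\star \circ \Lambda = \Lambda \circ \ad_Y$ for $Y \in \fs$ integrates to $L_s^\star \Lambda_X = \Lambda_{\Ad_{s^{-1}}X}$; combined with $\mathbb{S}$-invariance of $\ast_\nu$ this yields the covariance directly (equation (\ref{retract2})). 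Your approach instead fixes a $D$-retract $T_0$ and conjugates the identity $D_X = T_0^{-1} \circ X^\star \circ T_0$ by $L_s^\star$. Both are valid. The paper's argument has the advantage of not invoking the retractability hypothesis for this step, so the equivalence between the operator condition and (\ref{eq}) in fact holds for \emph{every} $D \in \mathbb{H}(\ast_\nu)$, retractable or not --- retractability is only needed to guarantee that solutions exist. Your version uses the hypothesis, which is harmless since it is already in the statement, but it obscures this slightly stronger fact.
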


\begin{rem}\label{999}
Equation (\ref{eq}) is trivially satisfied for each $X \in \fs$. In particular, we have
\begin{eqnarray} \nonumber
D_X\left(v_T\right) = \left(\left[X\right]_\mathfrak{s}\right)^\star\left(v_T\right) \text{ \,for all $X \in \fg$} \,\,\,\Longleftrightarrow\,\,\, D_X\left(v_T\right) = 0 \text{ \,for all $X \in \fk$.}
\end{eqnarray}
\end{rem}

\begin{proof}
Let $\Lambda : \fg \rightarrow \mathcal{C}^\infty\left(\mathbb{D}\right)\llbracket\nu\rrbracket : X \mapsto \Lambda_X = \lambda_X + o\left(\nu\right)$ be a quantum moment map associated to $D$ by lemma \ref{5}. 
Let $T \in \Op^{\mathbb{S}}\left(\ast_\nu\right)$ be a $\mathbb{S}$-equivalence of star-products on $\mathbb{D} \simeq \mathbb{S}$. In particular, its inverse $T^{-1}$ is an invertible linear convolution operators on $\mathcal{C}^\infty\left(\mathbb{S}\right)\llbracket\nu\rrbracket$ of the form (\ref{conv}) with kernel $$v_T := u_{T^{-1}} \in \mathcal{D}^\prime\left(\mathbb{S}\right)\llbracket\nu\rrbracket.$$ Let's denote by $ds$ the left-invariant Haar measure on $\mathbb{S}$. For $X \in \fg$, functional analysis theory on the left-invariant Lie group $\left(\mathbb{S}, ds\right)$ allows us to obtain formally the equivalences :
\begin{eqnarray} \nonumber D_X \circ T^{-1} = T^{-1} \circ X^\star &\Leftrightarrow& \left(D_X\right)_{s_0} \left(\int_{\mathbb{S}} v_T\left(s^{-1} s_0\right) f\left(s\right) ds\right) \,=\, \int_{\mathbb{S}} v_T\left(s^{-1} s_0\right) X^\star_s \left(f\right) ds \\Ê\nonumber && \text{for all $s_0 \in \mathbb{S}$ and $f \in \mathcal{D}\left(\mathbb{S}\right)$} \\Ê\nonumber &\Leftrightarrow& \int_{\mathbb{S}} \left(D_X\right)_{s_0} \left(v_T\left(s^{-1} s_0\right)\right) f\left(s\right) ds \,=\, - \int_{\mathbb{S}} X^\star_s\left(v_T\left(s^{-1} s_0\right)\right) f\left(s\right) ds \\Ê\nonumber && \text{for all $s_0 \in \mathbb{S}$ and $f \in \mathcal{D}\left(\mathbb{S}\right)$} \\Ê\label{retract} &\Leftrightarrow& \left(D_X\right)_{s_0} \left(v_T\left(s^{-1} s_0\right)\right) \,=\, - X^\star_s\left(v_T\left(s^{-1} s_0\right)\right) \text{ \, for all $s_0, s \in \mathbb{S}$.} \,\,\,\,\,\,\,\,\,\,\,\,
\end{eqnarray}

\vspace{-2 mm}

\noindent For each ${s_0}, s \in \mathbb{S}$ and $X \in \fg$, given that the star-product $\ast_\nu$ is $\mathbb{S}$-invariant, we have
\begin{eqnarray}
\nonumber \left(D_X\right)_{s_0}\left(v_T\left(s^{-1} {s_0}\right)\right) &=& \frac{1}{2 \nu} \,\left[\Lambda_X, L_{s^{-1}}^\star\left(v_T\right)\right]_{\ast_\nu}\left({s_0}\right) \\ \nonumber &=& \frac{1}{2 \nu} \, \left(L_{s^{-1}}^\star\left(\left[L_{s}^\star \left(\Lambda_X\right), v_T\right]_{\ast_\nu}\right)\right)\left({s_0}\right) \\ \label{retract2} &=& \frac{1}{2 \nu} \, \left[\Lambda_{\Ad_{s^{-1}}\left(X\right)}, v_T\right]_{\ast_\nu} \left(s^{-1} {s_0}\right) \,\,\,=\,\,\, \left(D_{\Ad_{s^{-1}}\left(X\right)}\right)_{s^{-1} {s_0}}\left(v_T\right)
\end{eqnarray}
where the third equality comes from the relation $L_{s^{-1}}^\star \circ \Lambda = \Lambda \circ \Ad_{s}$ which is the integral version of equality $Y^\star \circ \Lambda = \Lambda \circ \ad_{Y}$ for $Y \in \fs$; \cite[Ch.\,2, lem.\,2.5.8]{Ko14}.

\noindent For ${s_0} \in \mathbb{S}$, let $i_{\mathbb{S}}$ and $R_{s_0}$ be the maps defined on $\mathbb{S}$ by $i_{\mathbb{S}}\left(s\right) := s^{-1}$ and $R_{s_0}\left(s\right) := s s_0$ respectively. For each ${s_0}, s \in \mathbb{S}$ and $X \in \fg$, by using relations (\ref{2}) we get the equalities
\begin{eqnarray}
\nonumber - \,X_s^\star\left(v_T\left(s^{-1} {s_0}\right)\right) &=& -\left.\frac{d}{dt}\right|_0 \,v_T\left(\left(L_{\exp\left(- \,t X\right)}\left(s\right)\right)^{-1} {s_0}\right) \\ \nonumber &=& \left.\frac{d}{dt}\right|_0 \left(\left(R_{s_0} \circ i_{\mathbb{S}} \circ L_s\right)^\star v_T\right)\left(\left[\exp\left(t \Ad_{s^{-1}}\left(X\right)\right)\right]_\mathbb{S}\right) \\ \nonumber &=& \left.\frac{d}{dt}\right|_0 \left(\left(R_{s_0} \circ i_{\mathbb{S}} \circ L_s\right)^\star v_T\right)\left(\exp\left(t \left[\Ad_{s^{-1}}\left(X\right)\right]_\mathfrak{s}\right)\right) \\ \nonumber &=& \left.\frac{d}{dt}\right|_0 \,v_T\left(\exp\left(-\,t \left[\Ad_{s^{-1}}\left(X\right)\right]_\mathfrak{s}\right) s^{-1} {s_0}\right) \\ \label{retract1} &=& \left.\frac{d}{dt}\right|_0 \,v_T\left(L_{\exp\left(- \,t\, \left[\Ad_{s^{-1}}\left(X\right)\right]_\mathfrak{s}\right)}\left(s^{-1} {s_0}\right)\right) \,\,\,=\,\,\, \left(\left[\Ad_{s^{-1}}\left(X\right)\right]_\mathfrak{s}\right)_{s^{-1} {s_0}}^\star \left(v_T\right). \,\,\,\,\,\,\,\,\,\,\,\,
\end{eqnarray}
If we combine relations (\ref{retract}), (\ref{retract2}) and (\ref{retract1}), we obtain the equivalence
\begin{eqnarray} \label{2018}
D_X \circ T^{-1} = T^{-1} \circ X^\star \,\,\Longleftrightarrow\,\, \left(D_{\Ad_{s^{-1}}\left(X\right)}\right)\left(v_T\right) = \left(\left[\Ad_{s^{-1}}\left(X\right)\right]_\mathfrak{s}\right)^\star \left(v_T\right) \text{ \,for all $s \in \mathbb{S}$.}
\end{eqnarray}
As the operator $T \in \Op^{\mathbb{S}}\left(\ast_\nu\right)$ is a $D$-retract if and only if $D_X \circ T^{-1} = T^{-1} \circ X^\star$ for all $X \in \fg$, the proof is complete.
\end{proof}

\noindent As we will see in the next section, it can be hard to solve such a hierarchy of partial differential equations on the kernel of an invertible linear convolution operator. The fact that the homomorphism $D \in \mathbb{H}\left(\ast_\nu\right)$ is retractable is crucial to ensure that there exist solutions to these equations. 

\noindent The following results are also helpful tools for the resolution of these equations on specific examples. The first proposition can be deduced from lemma \ref{m}, remark \ref{mm} and corollary \ref{corsm}; \cite[cor.\,3]{Ko16}. 

\begin{prop}\label{smx} Let's assume that the star-product $\ast_\nu$ is \,$\mathbb{S} M$-invariant and \,$\mathbb{S} M$-equivalent to a $G$-invariant star-product on $\mathbb{D}$. Then, equation (\ref{eq}) is satisfied for each $X \in \fs \oplus \fm$ if and only if\, $Y^\star\left(v_T\right) = 0$ for all \,$Y \in \fm$. In this case, the map $$X \in \fg_\lambda \mapsto v_T\left(\exp\left(X\right)\right)$$ is radial in the Euclidian vector spaces $\left(\fg_\lambda, \beta_\sigma\right)$ and $\left(g_\lambda, \left(- | -\right)\right)$ for any $\lambda \in \Sigma^+$.
\end{prop}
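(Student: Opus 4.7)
The plan is to prove both assertions by reducing everything to the action of $\fm$ on $\mathbb{D}$. For the equivalence, remark \ref{999} shows that equation (\ref{eq}) is automatic for $X \in \fs$, so only the values on $\fm$ matter. Since $\fm \subset \fk$, we have $\left[X\right]_\fs = 0$ for $X \in \fm$, hence (\ref{eq}) on this part reduces to $D_X\left(v_T\right) = 0$. Under the hypotheses of the proposition, corollary \ref{corsm} guarantees that $D_X = X^\star$ for each $X \in \fm$, so the equation becomes exactly $Y^\star\left(v_T\right) = 0$ for all $Y \in \fm$.

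Assuming this identity, I integrate it to obtain the invariance of $v_T$ under the connected Lie subgroup $M \subset K$ with Lie algebra $\fm$. Since $M \subset K$ fixes the base point $o \in \mathbb{D}$, and since $\fm$ commutes with $\fa$ and preserves each root space $\fg_\lambda$, the group $M$ normalizes $\mathbb{S}$, so the transported $M$-action on $\mathbb{S}$ under the identification $\mathbb{D} \simeq \mathbb{S}$ is given by conjugation $\tau_m\left(s\right) = m s m^{-1}$. Consequently, for any root $\lambda \in \Sigma^+$, every $X \in \fg_\lambda$ and every $m \in M$, the equality $v_T\left(\exp\left(\Ad_m\left(X\right)\right)\right) = v_T\left(m \exp\left(X\right) m^{-1}\right) = v_T\left(\exp\left(X\right)\right)$ holds, i.e. the map $X \mapsto v_T\left(\exp\left(X\right)\right)$ on $\fg_\lambda$ is $\Ad_M$-invariant.

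To upgrade this invariance into radiality, I use that $\Ad_M$ preserves $\beta_\sigma$ (since $\beta$ is $\Ad_G$-invariant and $\sigma$ is $\Ad_K$-equivariant), so each $\Ad_M$-orbit is contained in a $\beta_\sigma$-sphere of $\fg_\lambda$. Lemma \ref{m} states that for $X \in \fg_\lambda \backslash \left\{0\right\}$, the infinitesimal orbit $\left[\fm, X\right] = X^{\perp\left(\lambda\right)}$ is exactly the tangent space at $X$ of that sphere, which --- together with the compactness argument for $M$ and connectedness of the sphere in dimension $\geq 2$ --- forces the orbit of $X$ to be the whole sphere. Radiality with respect to $\beta_\sigma$ follows, and the analogous statement for $\left(-\,|\,-\right)$ is immediate from the proportionality asserted in remark \ref{mm}.

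The main obstacle will be the last step: turning the local transitivity of $\Ad_M$ on $\beta_\sigma$-spheres (coming from lemma \ref{m}) into global transitivity. Beyond invoking the general fact that compact group orbits of maximal dimension in a connected manifold fill connected components, one should treat separately the degenerate case $\dim\left(\fg_\lambda\right) = 1$, where lemma \ref{m} gives no infinitesimal information and radiality reduces to a one-dimensional statement that is already compatible with the $M$-invariance obtained above.
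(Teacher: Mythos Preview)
Your argument is correct and matches the paper's approach exactly: the paper gives no detailed proof but simply states that the proposition ``can be deduced from lemma \ref{m}, remark \ref{mm} and corollary \ref{corsm}'' (with an external reference to \cite{Ko16}), and your proof unpacks precisely these three ingredients --- corollary \ref{corsm} for the reduction $D_Y = Y^\star$ on $\fm$, lemma \ref{m} for the transitivity of $\Ad_M$ on $\beta_\sigma$-spheres in each $\fg_\lambda$, and remark \ref{mm} for passing between the two inner products. Your explicit identification of the $M$-action on $\mathbb{S}$ with conjugation and the open-orbit/connectedness argument fill in exactly what the paper leaves implicit; the one-dimensional case you flag is indeed degenerate (the example in the paper only exploits radiality on the multi-dimensional root space $V = \fg_\lambda$).
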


\noindent This first proposition can advantageously be used to reduce the number of variables in the equations. 

\noindent The second proposition is a consequence of remark \ref{2017} and relation (\ref{2018}).

\begin{prop}\label{propw}
Let $W$ be a vector space such that $$\fs \subseteq W \subseteq \fg \text{\, \, and \, \,} \left\{\Ad_{s^{-1}}\left(X\right) : X \in W, \,sÊ\in \mathbb{S}\right\} = W.$$ If equation (\ref{eq}) is satisfied for all $X \in W$, then it is also satisfied for all $X \in \left[W, W\right]$.
\end{prop}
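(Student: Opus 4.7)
The plan is to exploit the equivalence (\ref{2018}) established in the proof of theorem \ref{thm}, which reformulates the PDE (\ref{eq}) for an element $X \in \fg$ — together with its entire $\Ad_\mathbb{S}$-orbit — as the single global operator identity $D_X \circ T^{-1} = T^{-1} \circ X^\star$ on $\mathcal{C}^\infty(\mathbb{S})\llbracket\nu\rrbracket$. The hypothesis that $W$ is stable under $\Ad_{s^{-1}}$ for every $s \in \mathbb{S}$ is precisely what is needed in order to pass freely, on $W$, between the pointwise PDE formulation and this operator identity formulation.

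Concretely, I would first note that the assumption, combined with the stability of $W$ under $\Ad_\mathbb{S}$, gives (\ref{eq}) on the full $\Ad_\mathbb{S}$-orbit of each $X \in W$. The backward direction of (\ref{2018}) then yields the operator identity $D_X \circ T^{-1} = T^{-1} \circ X^\star$ for every $X \in W$. Since $D : \fg \to \Der(\ast_\nu)$ is a Lie algebra homomorphism, and $X \mapsto X^\star$ is one as well (with the sign convention of the paper, as is visible from $X^\star(\lambda_Y) = \lambda_{[X, Y]}$ combined with the Jacobi identity for the Poisson bracket), a direct expansion of $[D_X, D_Y] \circ T^{-1}$ for $X, Y \in W$ produces
$$D_{[X,Y]} \circ T^{-1} \,=\, D_X \circ T^{-1} \circ Y^\star \,-\, D_Y \circ T^{-1} \circ X^\star \,=\, T^{-1} \circ \bigl[X^\star, Y^\star\bigr] \,=\, T^{-1} \circ \bigl[X, Y\bigr]^\star.$$
Applying the forward direction of (\ref{2018}) to this identity, at $s = \Id$, then delivers (\ref{eq}) for every element of $[W, W]$.

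I do not anticipate a serious obstacle in carrying out this plan: the essential content is already encoded in remark \ref{2017} and in the equivalence (\ref{2018}), and the hypothesis on $W$ is exactly tailored to make the PDE formulation and the operator identity formulation interchangeable on $W$. The only point requiring a little care is to stay within $W$ (and not inadvertently jump to all of $\fg$) when invoking the backward direction of (\ref{2018}), so as not to claim more than the hypothesis grants.
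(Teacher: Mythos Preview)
Your proposal is correct and follows exactly the approach indicated in the paper, whose entire proof is the one-line remark that the proposition ``is a consequence of remark \ref{2017} and relation (\ref{2018})''. Your write-up is simply a faithful expansion of that sentence: pass from the PDE to the operator identity on $W$ via the $\Ad_{\mathbb{S}}$-stability of $W$ and the backward direction of (\ref{2018}), propagate the operator identity to $[W,W]$ via remark \ref{2017}, and return to the PDE via the forward direction of (\ref{2018}) at $s=\Id$.
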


\noindent The major advantage of this second proposition is to reduce the number of equations to be solved. We choose this formulation for this proposition because of its correspondence with an efficient computation method based on the root space decomposition of $\fg$. Otherwise, it can be expressed obviously from remark \ref{999}, as $\left.D\right|_{\fk}$ is a Lie algebra homomorphism on $\fk$.

\section{Example: the complex unit ball}

For $n \geq 1$, the goal of this section is to develop the above-mentioned method on the complex unit ball in $\mathbb{C}^N$ as symmetric bounded domain. We will denote it by $\mathbb{D}_N$. This section is entirely based on the reference \cite{Ko14} in which the reader will find the computational details of the results described hereafter. We will refer continuously to the notations introduced in this text.

\noindent Let's describe the geometry of the domain $\mathbb{D}_N$ and the Lie algebra structure of its automorphism group from propositions \ref{rsd}, \ref{iwa2}, \ref{cool} and references \cite[Ch.\,1, \S\,1.5]{Ko14}, \cite[Ch.\,10, \S\,6.3]{He01}. 

\begin{prop} \hspace{1 mm}

\vspace{-5 mm}

\begin{enumerate}[(a)]
\item The complex unit ball \,$\mathbb{D}_N$ admits a structure of rank one irreducible Hermitian symmetric space of non compact type with automorphism group $G = SU\left(1, N\right)$. 
\item The Iwasawa group \,$\mathbb{S}$ of $G$ is an elementary Pyatetskii-Shapiro group acting simply transitively on $\mathbb{D}_N$. 
\item The structure of its Lie algebra $$\mathbb{R}^{2N} \,\simeq\, \fs \,=\, \mathfrak{a} \ltimes \mathfrak{n} \,=\, \mathbb{R} H \ltimes \left(V \oplus \mathbb{R} E\right)$$ is described in lemma \ref{psds} and the root space decomposition of the Lie algebra $\fg$ is of the form 
$$\fg \,=\, \fg_{2\lambda} \,\oplus\, \fg_{\lambda} \,\oplus\, \fg_0 \,\oplus\, \fg_{-\lambda} \,\oplus\, \fg_{-2\lambda} \text{\, for \,} \lambda \in \Sigma^+$$
with \,$\fg_{2\lambda} = \mathbb{R} E, \,\,\,\fg_{\lambda} = V, \,\,\,\fg_{0} \,=\, \mathbb{R} H \oplus \fm \,\simeq\, \mathbb{R} H \oplus \mathfrak{u}\left(N-1\right), \,\,\,\fg_{-\lambda} = \sigma\left(V\right)\,\, \text{and} \,\,\fg_{-2\lambda} = \mathbb{R} \,\sigma\left(E\right).$

\item The map
$$\mathbb{R}^{2N} \simeq\, \fs \,\rightarrow\, \mathbb{S} \,:\, \left(a, v^1_1, ..., v^1_{N-1}, v^i_1, ..., v^i_{N-1}, z\right) \simeq\, a H + v + z E \,\mapsto\, \exp\left(a H\right) \,\exp\left(v\right) \,\exp\left(z E\right)$$
defines a global Darboux chart on $\mathbb{S} \simeq \mathbb{D}_N$, where the vector $\left(v^1_1, ..., v^i_{N-1}\right)$ is the decomposition of $v$ in a symplectic basis of $\left(V, \Omega\right)$.
\end{enumerate}
\end{prop}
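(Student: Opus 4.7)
The plan is to verify each assertion by specialising the general structure theory of Sections 2 and 3 to the explicit linear group $G = \mathrm{SU}(1,N)$, using its defining representation on $\mathbb{C}^{N+1}$ equipped with the Hermitian form of signature $(1,N)$. The four statements are essentially reformulations of classical linear-algebraic computations on $\mathfrak{su}(1,N)$, and the only genuinely non-trivial step is showing that the parametrization in (d) is Darboux.

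For (a), I would identify $\mathbb{D}_N$ with the open unit ball of $\mathbb{C}^N$ via the standard Borel embedding, realise $\mathrm{SU}(1,N)$ as the biholomorphism group acting by generalised Möbius transformations, and identify the isotropy subgroup of the origin with $\mathrm{S}(\mathrm{U}(1)\times\mathrm{U}(N))$, which is connected and compact. This gives $\mathbb{D}_N\simeq G/K$ and, together with the simplicity of $\mathfrak{su}(1,N)$, yields the irreducible Hermitian symmetric structure; non-compactness follows from the non-compactness of $G$, and rank one follows from the fact that a maximal abelian subspace $\mathfrak{a}\subset\mathfrak{p}$ is one-dimensional (any two self-adjoint traceless matrices commuting with each other in the Cartan complement are proportional). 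Part (b) is then immediate: Proposition \ref{iwa2} gives the Iwasawa decomposition $G=\mathbb{S}K$ and the diffeomorphism $\mathbb{S}\to G/K$, so $\mathbb{S}$ acts simply transitively on $\mathbb{D}_N$; its shape as an elementary Pyatetskii-Shapiro group comes from Proposition \ref{cool} applied with $r=1$.

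For (c), I would take for $\mathfrak{a}$ the real line spanned by a suitable rank-one self-adjoint matrix $H$ (the standard choice being the one whose only non-zero entries are in the $(1, N+1)$ and $(N+1,1)$ positions), compute $\mathrm{ad}(H)$ on $\mathfrak{g}$, and identify its eigenvalues as $\pm\lambda(H)$ and $\pm 2\lambda(H)$. A direct matrix computation shows $\mathfrak{g}_{\pm 2\lambda}$ are one-dimensional and $\mathfrak{g}_{\pm\lambda}$ have real dimension $2(N-1)$, while the centraliser of $H$ in $\mathfrak{k}$ is canonically isomorphic to $\mathfrak{u}(N-1)$, giving $\mathfrak{m}\simeq\mathfrak{u}(N-1)$. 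The Heisenberg-type brackets $[V,V]\subset\mathbb{R}E$ and $[H,V\oplus\mathbb{R}E]\subset V\oplus \mathbb{R} E$ described in lemma \ref{psds} are then read off from the matrix identities $[\mathfrak{g}_{\lambda},\mathfrak{g}_{\lambda}]\subset\mathfrak{g}_{2\lambda}$ and $[H,\mathfrak{g}_{k\lambda}]=k\lambda(H)\,\mathfrak{g}_{k\lambda}$.

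For (d), I would first observe that the factorisation $\mathbb{S}=A\ltimes N = \exp(\mathbb{R}H)\,\exp(V\oplus\mathbb{R}E)$ and the fact that $N$ is a simply connected 2-step nilpotent (Heisenberg) group imply that the composite map $\exp(aH)\exp(v)\exp(zE)$ is a global diffeomorphism from $\mathbb{R}^{2N}$ onto $\mathbb{S}$: the Baker--Campbell--Hausdorff formula terminates, and each factor is itself a global exponential. The main obstacle is to verify that this chart is Darboux, i.e.\ that the pull-back of the Kähler symplectic form $\omega$ on $\mathbb{S}\simeq\mathbb{D}_N$ is the constant form $C(\mathrm{d}a\wedge\mathrm{d}z+\sum_k \mathrm{d}v^1_k\wedge\mathrm{d}v^i_k)$ in those coordinates. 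For this I would use that $\omega$ is left $\mathbb{S}$-invariant and thus determined by its value at $\mathrm{Id}$, compute that value via the Kähler identification $(\cdot\mid\cdot)$ of remark \ref{mm} together with the ad-invariance of the Killing form and the root-space bracket relations of (c), and finally check that the left-invariant $1$-forms dual to $H$, $E$ and to a symplectic basis of $V$ coincide, up to a left translation, with the coordinate differentials along the ordered exponential parametrisation, which is a direct application of BCH using that $[V,E]=0$ and $[H,v+zE]=v+2zE$.
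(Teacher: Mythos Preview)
The paper does not actually give a proof of this proposition: it is stated as a summary of structural facts, with the sentence immediately preceding it referring the reader to propositions~\ref{rsd}, \ref{iwa2}, \ref{cool} and to the external references \cite[Ch.\,1, \S\,1.5]{Ko14} and \cite[Ch.\,10, \S\,6.3]{He01} for the details. Your proposal therefore goes further than the paper itself by sketching the explicit matrix-model verification in $\mathfrak{su}(1,N)$; the approach for (a)--(c) is correct and standard, and is exactly the content of the cited passages of \cite{He01} and \cite{Ko14}.

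One point in (d) deserves care. Your final sentence --- that the left-invariant $1$-forms dual to $H$, $E$ and a symplectic basis of $V$ ``coincide, up to a left translation, with the coordinate differentials'' --- is not literally true on a non-abelian group: the left-invariant coframe differs from $(da, dv^1_k, dv^i_k, dz)$ by a non-constant matrix of functions. What makes the chart Darboux is rather that the closed left-invariant $2$-form $\omega$, once expressed in the coordinate coframe via BCH, has constant coefficients; the grading $[H, v+zE]=v+2zE$, $[V,V]\subset\mathbb{R}E$, $[V,E]=0$ forces the non-constant BCH corrections to cancel in the wedge products that actually occur in $\omega$. This is the computation carried out in \cite[Ch.\,1, \S\,1.5]{Ko14} and \cite{BM01}, and your outline points to the right ingredients but understates the verification needed. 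The global-diffeomorphism part of (d), on the other hand, is exactly as you describe.
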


Given that $\rank\left(\mathbb{D}_N\right) = 1$, it is clear from corollary \ref{H2S} that all the $\mathbb{S}$-invariant star-products on $\mathbb{D}_N$ are $\mathbb{S}$-equivalent. In particular, in this case, the statement of proposition \ref{fed0} is trivial. From proposition \ref{irr}, we deduce that the $G$-equivalence classes of $G$-invariant star products on $\mathbb{D}_N$ are parametrized by $Z^2\left(\mathbb{D}\right)^G\llbracket\nu\rrbracket \simeq \mathbb{C}\llbracket\nu\rrbracket$. As a consequence, all the $G$-invariant star-products on $\mathbb{D}_N$ are $G$-equivalent up to a reparametrization of the formal parameter $\nu$; \cite{BB03}.

\noindent Let's consider the $\mathbb{S}$-invariant star-product $\ast_\nu$ on $\mathbb{S} \simeq \mathbb{D}_N$ obtained from the explicit deformation quantization described in \cite[Ch.\,4, thm.\,4.5]{BG15}, \cite[Ch.\,3, thm.\,3.1]{BM01} and \cite[Ch.\,2, thm.\,2.4.7]{Ko14}. 

\noindent From computations made in reference \cite[Ch.\,2, \S\,2.3.4 and \S\,2.4.1]{Ko14}, it is proven that the Moyal-Weyl star-product on $\fs \simeq \mathbb{R}^{2N}$ is $\fs$-covariant and equivalent to $\ast_\nu$ by an operator $T_0$ preserving the moment map on $\fs$; \cite{BG15}, \cite{BM01}. As a consequence, the star-product $\ast_\nu$ is $\fs$-covariant. 
In addition, elementary computations from reference \cite[Ch.\,2, \S\,2.6.3]{Ko14} give us the relation $$\frac{1}{2 \nu} \left[\lambda_Y, -\right]_{\ast_\nu} = Y^\star$$ for each $Y \in \fm$. As a consequence, corollaries \ref{corcov}, \ref{corsm} and \ref{corirrr} lead us to the following result.

\begin{lem}
The star-product $\ast_\nu$ is $\fs$-covariant, $\mathbb{S} M$-invariant and $\mathbb{S} M$-equivalent to any $G$-invariant star-product on $\mathbb{D}_N$. If $D \in \mathbb{H}\left(\ast_\nu\right)$ is a retractable homomorphism and $\Lambda$ the quantum moment map associated to $D$ by lemma \ref{5}, then $$D_X = X^\star \text{ \, and \, } \Lambda_X - \lambda_X \in \nu\, \mathbb{C}\llbracket\nu\rrbracket \text{ \, for each $X \in \fs \oplus \fm$.}$$ In addition, the space of retractable homomorphisms of \,$\mathbb{H}\left(\ast_\nu\right)$ is parametrized by $\mathbb{C}^{2}\llbracket\nu\rrbracket$.  
\end{lem}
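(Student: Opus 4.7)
The plan is to assemble the lemma from three inputs already in place: (a) the $\fs$-covariance of $\ast_\nu$, which is recorded in the paragraph immediately preceding the lemma via the equivalence to the Moyal-Weyl star-product through an operator $T_0$ preserving $\lambda|_\fs$; (b) the identity $\frac{1}{2\nu}[\lambda_Y, -]_{\ast_\nu} = Y^\star$ for $Y \in \fm$, also recalled above; and (c) the structural corollaries \ref{corcov}, \ref{corsm}, \ref{corirrr}. Since $r = \rank(\mathbb{D}_N) = 1$, the final parametrization by $\mathbb{C}^2\llbracket\nu\rrbracket$ will come for free from corollary \ref{corirrr}.

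First I would use (b) together with remark \ref{10} to conclude that $\ast_\nu$ is $M$-invariant: the relation $Y^\star = \frac{1}{2\nu}[\lambda_Y,-]_{\ast_\nu}$ shows that each $Y^\star$ (for $Y \in \fm$) is a derivation of $\ast_\nu$, which by remark \ref{10} is equivalent to the connected group $M$ acting by automorphisms of $\ast_\nu$. Combined with the $\mathbb{S}$-invariance inherited from \cite{BG15}, this yields $\mathbb{S}M$-invariance. Next I would invoke proposition \ref{fed0}: every $G$-invariant star-product on $\mathbb{D}_N$ is $\mathbb{S}$-equivalent to $\ast^{(\nabla,0)}_\nu$, and by the standing choice $\ast_\nu$ is itself $\mathbb{S}$-equivalent to $\ast^{(\nabla,0)}_\nu$, so every $G$-invariant star-product is $\mathbb{S}$-equivalent to $\ast_\nu$. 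The equivalence of conditions in corollary \ref{corsm} then promotes $\mathbb{S}$-equivalence to $\mathbb{S}M$-equivalence, giving the first sentence of the lemma.

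For the second assertion, let $D \in \mathbb{H}(\ast_\nu)$ be retractable with $\Lambda$ its associated quantum moment map from lemma \ref{5}. By definition of $\mathbb{H}(\ast_\nu)$, $D_X = X^\star$ for $X \in \fs$. For $Y \in \fm$, since $\ast_\nu$ is $\mathbb{S}M$-invariant and there exists $T \in \op$ such that $T(\ast_\nu)$ is $G$-invariant and $\mathbb{S}M$-equivalent to $\ast_\nu$, corollary \ref{corsm} yields $D^T_Y = Y^\star$ for every retract $T$ of $D$; hence $D_Y = Y^\star$. Together this gives $D_X = X^\star$ on all of $\fs\oplus\fm$. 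The identity $\Lambda_X - \lambda_X \in \nu\,\mathbb{C}\llbracket\nu\rrbracket$ for $X \in \fn$ and $H \in \fa$ is exactly the content of corollary \ref{corcov} (using $\fs$-covariance of $\ast_\nu$), and the extension to $Y \in \fm$ is its second part.

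Finally, since $\mathbb{D}_N$ is an irreducible Hermitian symmetric space of non compact type of rank $r = 1$, corollary \ref{corirrr} applies directly and parametrizes the space of retractable homomorphisms of $\mathbb{H}(\ast_\nu)$ by $\mathbb{C}^{r+1}\llbracket\nu\rrbracket = \mathbb{C}^2\llbracket\nu\rrbracket$. No step requires genuine calculation beyond the references already compiled in \cite{Ko14}; the only subtle point is to remember that the $\mathbb{S}M$-equivalence hypothesis needed to apply corollary \ref{corsm} has been fulfilled by combining $\mathbb{S}M$-invariance with proposition \ref{fed0}, so the main obstacle is really just ensuring that the hypotheses of each invoked corollary are verified in order.
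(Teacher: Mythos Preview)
Your overall strategy matches the paper's one-line justification (``corollaries \ref{corcov}, \ref{corsm} and \ref{corirrr} lead us to the following result''), and your expansion of it is almost entirely correct. There is, however, a small circularity in the way you sequence corollaries \ref{corcov} and \ref{corsm}.

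In your first paragraph you claim that ``the equivalence of conditions in corollary \ref{corsm} then promotes $\mathbb{S}$-equivalence to $\mathbb{S}M$-equivalence''. But the third bullet of that equivalence \emph{is} the $\mathbb{S}M$-equivalence statement, and the first two bullets are the assertion $D^T_Y = Y^\star$ for $Y \in \fm$; none of these has been established yet from $\mathbb{S}M$-invariance and proposition \ref{fed0} alone. Then in your second paragraph you deduce $D^T_Y = Y^\star$ \emph{from} the third bullet, closing the circle.

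The fix is simply to reorder: first invoke the second part of corollary \ref{corcov} (which needs only $\fs$-covariance) to get $\Lambda^T_Y - \lambda_Y \in \nu\,\mathbb{C}\llbracket\nu\rrbracket$ for $Y \in \fm$; combine this with the given identity $\frac{1}{2\nu}[\lambda_Y,-]_{\ast_\nu} = Y^\star$ to obtain $D^T_Y = \frac{1}{2\nu}[\Lambda^T_Y,-]_{\ast_\nu} = Y^\star$. This verifies the first bullet of corollary \ref{corsm}, hence the third, and together with the opening clause of that corollary (all $G$-invariant star-products $\mathbb{S}$-equivalent to $\ast_\nu$ are mutually $\mathbb{S}M$-equivalent) you get $\mathbb{S}M$-equivalence of $\ast_\nu$ with every $G$-invariant star-product. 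The rest of your argument---$D_X = X^\star$ on $\fs\oplus\fm$, the $\Lambda_X - \lambda_X$ statement via corollary \ref{corcov}, and the $\mathbb{C}^2\llbracket\nu\rrbracket$ parametrization via corollary \ref{corirrr} with $r=1$---is fine as written.
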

 
As the star-product $\ast_\nu$ is $\fs$-covariant, we can fix the moment map $\lambda$ as the linear map on $\fs$ given by proposition \ref{pqmm}. 
After some computations, we obtain the following expressions in the global Darboux coordinate system $\left(a, v, z\right)$.

\begin{lem}\label{qmmex}
Let $\Lambda : \fg \rightarrow \mathcal{C}^\infty\left(\mathbb{S}\right)\llbracket\nu\rrbracket$ be the quantum moment map associated to a homomorphism of \,$\mathbb{H}\left(\ast_\nu\right)$ by lemma \ref{5}. If $\left.\Lambda\right|_{\fs} = \lambda$, then there exists a formal constant $\alpha \in \mathbb{C}\llbracket\nu\rrbracket$ such that 
\begin{eqnarray} \nonumber
&& T_0 \Lambda_{Y} = \lambda_{Y} \text{ \, for all \,$Y \in \left[\fm, \fm\right]$ \,} \text{ \, and \, } T_0 \Lambda_{Z} = \lambda_{Z} + \alpha \text{ \, for all \,$Z \in Z\left(\fm\right)$;} \\ \nonumber
&& \left(T_0 \Lambda_{\sigma\left(v_0\right)}\right)\left(a, v, z\right) = e^a \left[4 \left(v_0\text{\hspace{0.1 mm}}|\text{\hspace{0.1 mm}}v\right) z - \left(\left(v\text{\hspace{0.1 mm}}|\text{\hspace{0.1 mm}}v\right) + \alpha\right) \,\Omega\left(v_0, v\right)\right] \text{ \, for all \,$v_0 \in V$;} \\ \nonumber
&& \left(T_0 \Lambda_{\sigma\left(E\right)}\right)\left(a, v, z\right) = e^{2a} \left[4\text{\hspace{0.2 mm}}z^2 + \left(\left(v\text{\hspace{0.1 mm}}|\text{\hspace{0.1 mm}}v\right) + \alpha\right)^2 + \left(N-1\right) \nu^2\right];
\end{eqnarray}
where $\left(- | -\right)$ is the scalar product on $\fs$ induced by the K\"ahlerian structure of \,$\mathbb{S} \simeq \mathbb{D}_N$ and described in remark \ref{mm}.
\end{lem}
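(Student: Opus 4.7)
The plan is to push the quantum moment map identity through the intertwiner $T_0$ onto the flat Moyal-Weyl side, and then to exploit the resulting first-order linear equations weight-space by weight-space under the $\ad(\fa)$-action. Applying $T_0$ to $\Lambda_{[X,Y]} = \frac{1}{2\nu}[\Lambda_X, \Lambda_Y]_{\ast_\nu}$, and using both $T_0 \lambda_X = \lambda_X$ for $X \in \fs$ and the $\fs$-covariance of the Moyal-Weyl product, yields the fundamental relation
\[
X^\star(T_0 \Lambda_Y) \;=\; T_0 \Lambda_{[X,Y]} \qquad (X \in \fs,\ Y \in \fg).
\]
This reduces the problem to determining $T_0\Lambda_Y$ when $Y$ lies in $\fm$, $\sigma(V)$, or $\mathbb{R}\sigma(E)$.

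For $Y \in \fm$: since $[\fs, \fm] \subset \fn \subset \fs$, the right-hand side above equals $\lambda_{[X,Y]} = X^\star(\lambda_Y)$, and the simple transitivity of the $\mathbb{S}$-action on $\mathbb{D}$ forces $T_0 \Lambda_Y - \lambda_Y$ to be a formal constant $c_Y$. Writing the quantum moment map relation between two elements $Y_1, Y_2 \in \fm$ in the Moyal-Weyl product, where the bracket of the quadratic Darboux-coordinate polynomials $\lambda_{Y_1}, \lambda_{Y_2}$ coincides with $2\nu\{\lambda_{Y_1},\lambda_{Y_2}\}$, I obtain $c_{[Y_1, Y_2]} = 0$. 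Hence $c_Y$ vanishes on $[\fm,\fm]$, and the only surviving freedom is a single formal parameter $\alpha \in \nu\,\mathbb{C}\llbracket\nu\rrbracket$ on the one-dimensional center $Z(\fm) \simeq Z\bigl(\mathfrak{u}(N-1)\bigr)$.

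For $Y$ in the negative root spaces $\fg_{-\lambda} = \sigma(V)$ or $\fg_{-2\lambda} = \mathbb{R}\sigma(E)$: the equation with $X = H$ becomes $H^\star(T_0\Lambda_Y) = \mu(H)\,T_0\Lambda_Y$, which in the Darboux chart (where $H^\star$ is a multiple of $\partial_a$) produces the prefactors $e^a$ for $Y \in \sigma(V)$ and $e^{2a}$ for $Y \in \mathbb{R}\sigma(E)$. The dependence on $(v, z)$ is then determined by the same equation with $X \in V$ and $X = E$: since $[X, Y]$ now lies in $\fs \oplus \fm$, the right-hand side is already known from the previous step. The result is a triangular system of inhomogeneous first-order linear PDEs, solved by direct integration in the Darboux coordinates; the integration constants get pinned down by evaluation at $\Id \in \mathbb{S}$ in terms of $\alpha$.

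The main obstacle is controlling the higher-order $\nu$-contributions of the Moyal-Weyl bracket. The $(N-1)\nu^2$ correction in $T_0\Lambda_{\sigma(E)}$ is not of Poisson-bracket origin: it arises from the $\nu^2$-term in the expansion of $\frac{1}{2\nu}[-,-]_{\ast_\nu^{\mathrm{MW}}}$ when enforcing the quantum moment map relation $T_0\Lambda_{[\sigma(v),v]} = \frac{1}{2\nu}[T_0\Lambda_{\sigma(v)}, \lambda_v]_{\ast_\nu^{\mathrm{MW}}}$ for $v \in V$. Computing this correction requires the explicit flat-space Moyal-Weyl kernel together with the symplectic form $\Omega$ on $V$; its coefficient $N - 1 = \tfrac{1}{2}\dim(V)$ reflects the dimensional trace over $V$ arising from commuting quadratic and cubic Darboux polynomials. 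Checking that this higher-order term is globally consistent with the single-parameter family parametrized by $\alpha$, and with the simpler formulas on $\fm$ and $\sigma(V)$, is the real technical crux of the calculation.
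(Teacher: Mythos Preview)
Your approach is correct and matches the paper's strategy: the paper does not give a self-contained proof here, writing only ``After some computations'' and deferring the details to \cite{Ko14}, but the computational scheme it relies on is exactly the one you describe --- using the relation $X^\star(\Lambda_Y)=\Lambda_{[X,Y]}$ for $X\in\fs$ (cf.\ remark~\ref{33}), handling $\fm$ via corollary~\ref{corcov} and the decomposition $\fm=[\fm,\fm]\oplus Z(\fm)$, then integrating the resulting first-order system weight-space by weight-space on $\sigma(V)$ and $\mathbb{R}\,\sigma(E)$, with the $(N-1)\nu^2$ term arising from the genuinely noncommutative part of the Moyal--Weyl bracket. One small point: you should not assert $\alpha\in\nu\,\mathbb{C}\llbracket\nu\rrbracket$, since $T_0$ is only assumed to fix $\lambda$ on $\fs$, not on $\fm$; the paper allows $\alpha\in\mathbb{C}\llbracket\nu\rrbracket$ and later normalises to $\alpha=1$.
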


\noindent In view of this result, corollary \ref{corirrr2} and remark \ref{2020} lead us to the following statement.

\begin{cor}
Every homomorphism of \,$\mathbb{H}\left(\ast_\nu\right)$ is retractable.
\end{cor}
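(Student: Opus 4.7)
The plan is to compute $\dim_{\mathbb{C}\llbracket\nu\rrbracket} \mathbb{H}(\ast_\nu)$ and match it against the lower bound provided by corollary \ref{corirrr2}. Since $\mathbb{D}_N$ is a rank-one ($r = 1$) irreducible Hermitian symmetric space of non compact type, proposition \ref{irr} gives $\dim Z^2(\mathbb{D}_N)^G = 1$, so the lower bound reads $r + \dim Z^2(\mathbb{D}_N)^G = 2$, which also matches the parametrization of retractable homomorphisms from corollary \ref{corirrr}. By the second half of corollary \ref{corirrr2}, the equality $\dim_{\mathbb{C}\llbracket\nu\rrbracket} \mathbb{H}(\ast_\nu) = 2$ is equivalent to the assertion, so it suffices to establish the upper bound $\dim_{\mathbb{C}\llbracket\nu\rrbracket} \mathbb{H}(\ast_\nu) \leq 2$.

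To this end, I would start from an arbitrary $D \in \mathbb{H}(\ast_\nu)$ with associated quantum moment map $\Lambda : \fg \to \mathcal{C}^\infty(\mathbb{S})\llbracket\nu\rrbracket$ from lemma \ref{5}. Since $\ast_\nu$ is $\fs$-covariant by the previous lemma, the moment map $\lambda$ itself satisfies $\frac{1}{2\nu}[\lambda_X, -]_{\ast_\nu} = X^\star$ for each $X \in \fs$, so lemma \ref{intder} yields $\Lambda_X - \lambda_X \in \mathbb{C}\llbracket\nu\rrbracket$ on $\fs$. Inserting this in the quantum moment map identity $\Lambda_{[X,Y]} = \frac{1}{2\nu}[\Lambda_X, \Lambda_Y]_{\ast_\nu}$, and using that formal constants are central for the commutator, forces the resulting scalars to vanish on $\fn = [\fs,\fs]$, leaving a single free formal scalar on $\fa = \mathbb{R}H$ for the restriction $\Lambda|_\fs$. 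After subtracting this constant, one reaches the normalization $\Lambda|_\fs = \lambda$ of lemma \ref{qmmex}, which then determines $\Lambda$ on $\fm$ and on the negative root spaces $\sigma(V) \oplus \sigma(\mathbb{R}E)$ entirely from the single additional formal scalar $\alpha \in \mathbb{C}\llbracket\nu\rrbracket$. Since $\fg = \fs \oplus \fm \oplus \sigma(V) \oplus \sigma(\mathbb{R}E)$, this yields the bound $\dim_{\mathbb{C}\llbracket\nu\rrbracket} \mathbb{H}(\ast_\nu) \leq 2$.

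The only subtle point to check is the genuine independence of the two formal scalar parameters identified above, but this is immediate from the closed-form expressions in lemma \ref{qmmex}: a translation of $\Lambda_H$ by a formal constant on $\fa$ does not alter the formulas for $T_0 \Lambda_{\sigma(v_0)}$ and $T_0 \Lambda_{\sigma(E)}$, so the shift on $\fa$ and the parameter $\alpha$ parametrize complementary directions in $\mathbb{H}(\ast_\nu)$. Combining this upper bound with the lower bound from corollary \ref{corirrr2} yields the equality $\dim_{\mathbb{C}\llbracket\nu\rrbracket} \mathbb{H}(\ast_\nu) = 2$, which by the same corollary is equivalent to the desired conclusion that every homomorphism of $\mathbb{H}(\ast_\nu)$ is retractable.
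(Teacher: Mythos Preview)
Your argument follows the same dimension-counting route as the paper: bound $\dim_{\mathbb{C}\llbracket\nu\rrbracket}\mathbb{H}(\ast_\nu)$ from above via lemma \ref{qmmex}, match it against the lower bound $r+\dim Z^2(\mathbb{D}_N)^G=2$ from corollary \ref{corirrr2}, and conclude by the ``if and only if'' clause there. The paper compresses exactly this into the single sentence citing lemma \ref{qmmex}, corollary \ref{corirrr2} and remark \ref{2020}.

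One step, however, is not literally valid as you phrase it. You write ``after subtracting this constant, one reaches the normalization $\Lambda|_\fs=\lambda$''. But the quantum moment map furnished by lemma \ref{5} is \emph{unique} on all of $\fg$ (because $\fg=[\fg,\fg]$), so you cannot shift $\Lambda_H$ by a formal constant while leaving the rest of $\Lambda$ untouched: the identity $\Lambda_{[X,\sigma(X)]}=\frac{1}{2\nu}[\Lambda_X,\Lambda_{\sigma(X)}]_{\ast_\nu}$ with $[X,\sigma(X)]\in\fa$ would then fail. What actually moves a given $D\in\mathbb{H}(\ast_\nu)$ to one whose quantum moment map satisfies $\Lambda|_\fs=\lambda$ is the free action of $\Aut(\ast_\nu)$ from lemma \ref{20}, via formula (\ref{actautS}); this is precisely the content of remark \ref{rg}, and it is why the paper's one-line justification invokes remark \ref{2020}. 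Once you replace the ``subtraction'' by this automorphism action, your count is correct: the $\Aut(\ast_\nu)$-orbits are one-dimensional (lemma \ref{17}, $r=1$) and each meets the slice $\{\Lambda|_\fs=\lambda\}$, which lemma \ref{qmmex} shows to be at most one-dimensional, giving the upper bound $2$. Your final paragraph on independence of the two parameters is then redundant, since the lower bound from corollary \ref{corirrr2} already forces equality.
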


\noindent Let's consider a homomorphism $D \in \mathbb{H}\left(\ast_\nu\right)$ associated to a quantum moment map $\Lambda$ obtained in lemma \ref{qmmex}. In particular, the previous corollary ensures the existence of solutions to the hierarchy of partial differential equations (\ref{eq}) characterizing the kernel $v_T \in \mathcal{D}^\prime\left(\mathbb{S}\right)\llbracket\nu\rrbracket$ of the inverse of any $D$-retract. 

\noindent Let's consider the vector space $$W = \fs \oplus \fm \oplus \fg_{-\lambda}.$$ As it satisfies the hypothesis of proposition \ref{propw}, the kernel $v_T$ is solution to PDE (\ref{eq}) for all $X \in W$ if and only if it is solution to PDE (\ref{eq}) for all $X \in \fg = \left[W, W\right]$; \cite[Ch.\,2, lem.\,2.6.5]{Ko14}. Therefore, it is not necessary to write and to solve the PDE (\ref{eq}) for $X \in \fg_{-2\lambda}$ if we solve it for all $X \in W$. This result is very important because it reduces the number of equations that we have to consider. This case is discussed in \cite[Appendix B \& Ch.\,2, \S\,2.7]{Ko14}. 

\noindent Moreover, an application of proposition \ref{smx} with our choice Darboux coordinate system proves that $v_T$ depends only on $a$, $z$ and the radial component $r$ of $v$ in the Euclidian vector space $$\left(g_\lambda = V,\, \left(- | -\right)\right).$$ It is easy to check that any such solution of the form $v_T\left(a, r, z\right)$ satisfies equation (\ref{eq}) for $X \in \fs \oplus \fm$; \cite[Ch.\,2, lem.\,2.6.9]{Ko14}. As a consequence, we obtain the following lemma.

\begin{lem}
An operator $T \in \Op^{\mathbb{S}}\left(\ast_\nu\right)$ is a $D$-retract if and only if it is the inverse of a convolution operator with a kernel of the form $v_T\left(a, r, z\right)$ satisfying the partial differential equation 
\begin{eqnarray}\label{eqfin}
D_{\sigma\left(v_0\right)}\left(v_T\right) + \left(v_0\right)^\star v_T = 0
\end{eqnarray}
for each $v_0 \in V$. 
\end{lem}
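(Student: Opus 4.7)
The plan is to specialize the general characterization of $D$-retracts given by Theorem \ref{thm} by invoking, in sequence, the two simplification tools Proposition \ref{propw} (to cut down the number of equations) and Proposition \ref{smx} (to cut down the number of variables on which $v_T$ depends). The hypotheses of both are supplied by the previous lemma of this section: the chosen star-product $\ast_\nu$ is $\fs$-covariant, $\mathbb{S}M$-invariant and $\mathbb{S}M$-equivalent to any $G$-invariant star-product on $\mathbb{D}_N$.

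First I would apply Proposition \ref{propw} with the choice $W := \fs \oplus \fm \oplus \fg_{-\lambda}$. The inclusion $\fs \subseteq W \subseteq \fg$ is immediate, and the $\Ad_{\mathbb{S}}$-invariance follows from the root space bracket relations: $\Ad_A$ preserves every root space and commutes with $\fm$, and for $Z \in \fn$ one has $[Z, \fm] \subseteq \fn \subseteq \fs$ together with $[Z, \fg_{-\lambda}] \subseteq \fg_0 \oplus \fg_\lambda$, both sitting inside $W$. Since $[\fg_{-\lambda}, \fg_{-\lambda}] = \fg_{-2\lambda}$ by applying $\sigma$ to the relation $[V, V] = \mathbb{R} E$ of lemma \ref{psds}, we obtain $W \cup [W, W] = \fg$, so Proposition \ref{propw} permits replacing the range ``$X \in \fg$'' in (\ref{eq}) by ``$X \in W$''. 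Proposition \ref{smx} then handles the slice $X \in \fs \oplus \fm$: equation (\ref{eq}) on this subalgebra is equivalent to $Y^\star v_T = 0$ for every $Y \in \fm$, which forces $v \mapsto v_T(\exp(v))$ to be radial on $(V, (-|-))$; in the global Darboux chart $(a, v, z)$ this is precisely the form $v_T = v_T(a, r, z)$.

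It remains to translate equation (\ref{eq}) on the last piece $\fg_{-\lambda} = \sigma(V)$. For $v_0 \in V \subset \fs$, the element $v_0 + \sigma(v_0)$ is $\sigma$-fixed and hence lies in $\fk$, so $\sigma(v_0) = -v_0 + \bigl(v_0 + \sigma(v_0)\bigr)$ is the Iwasawa decomposition of $\sigma(v_0)$ and $[\sigma(v_0)]_{\fs} = -v_0$. Equation (\ref{eq}) applied to $X = \sigma(v_0)$ therefore becomes $D_{\sigma(v_0)}(v_T) = -(v_0)^\star v_T$, which is exactly (\ref{eqfin}). Putting the pieces together yields both implications: a $D$-retract produces, via Theorem \ref{thm}, a kernel of the form $v_T(a, r, z)$ solving (\ref{eqfin}); conversely such a kernel satisfies (\ref{eq}) on $\fs \oplus \fm$ by Proposition \ref{smx}, on $\fg_{-\lambda}$ by (\ref{eqfin}), and thus on all of $\fg$ by Proposition \ref{propw}, so the corresponding $T$ is a $D$-retract in view of Theorem \ref{thm}. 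I expect no real obstacle beyond the $\Ad_{\mathbb{S}}$-stability check for $W$ and the small computation $[\sigma(v_0)]_{\fs} = -v_0$ that couples the two vector field terms in (\ref{eqfin}); every other step is a direct plug-in of machinery prepared in the previous section.
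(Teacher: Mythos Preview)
Your argument is correct and follows the paper's own route essentially step for step: the paper also sets $W=\fs\oplus\fm\oplus\fg_{-\lambda}$, invokes Proposition~\ref{propw} to discard the $\fg_{-2\lambda}$ equation, uses Proposition~\ref{smx} to reduce $v_T$ to the radial form $v_T(a,r,z)$, and then writes equation~(\ref{eq}) for $X=\sigma(v_0)\in\fg_{-\lambda}$. Your computation $[\sigma(v_0)]_\fs=-v_0$ via $v_0+\sigma(v_0)\in\fk$ is exactly the missing link that turns (\ref{eq}) into (\ref{eqfin}); the only cosmetic difference is that the paper records $[W,W]=\fg$ (citing \cite[Ch.~2, lem.~2.6.5]{Ko14}) rather than your weaker but sufficient $W\cup[W,W]=\fg$, and it states separately (citing \cite[Ch.~2, lem.~2.6.9]{Ko14}) that any kernel of the form $v_T(a,r,z)$ automatically satisfies (\ref{eq}) on $\fs\oplus\fm$, a point you absorb into your appeal to Proposition~\ref{smx}.
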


\noindent In order to simplify the resolution of this equation, we can fix $\alpha = 1$ in quantum moment map $\Lambda$. In fact, the computation of the set of $D$-retract for only one such homomorphism $D$ is enough to determine the set of $G$-invariant star-products on $\mathbb{D}$ up a reparametrization of the formal parameter $\nu$. 

\noindent For computational reasons and without loss of generality, equation (\ref{eqfin}) was intertwined by a partial Fourier transform $\mathcal{F}$ in the $z$ variable and written with the notation $\sigma\left(v_0\right) = \left[\sigma\left(E\right), w\right]$ for $w \in V$; \cite[Ch.\,2, \S\,2.6]{Ko14}. We obtained 
\begin{eqnarray}
\nonumber 0 & = & \left[i \, \xi \, e^{a} \left[\left(1+ \sqrt{1 - \nu^2 \xi^2}\right) r^2 \, + \, 2\,+\, 2\,i\,\xi \, e^{-a}\right]  {\left(w | v\right)}\right] \vartheta \\ \nonumber && - \left[e^{a} \left(1+ \sqrt{1 - \nu^2 \xi^2}\right)  {\Omega\left(w, v\right)}\right] \vartheta \\ \nonumber & & - \left[e^{a} \left(1+ \sqrt{1 - \nu^2 \xi^2}\right)  {\Omega\left(w, v\right)}\right] \, \partial_a\left(\vartheta\right) \\ \nonumber & & + \left[e^{a} \left(- 1+ \sqrt{1 - \nu^2 \xi^2}\right) \,-\, \frac{e^{-a}}{r}\right] \, {\Omega\left(w, v\right)}\, r \, \partial_r \left(\vartheta\right) \\ \nonumber & & + \left[e^{a} \left[\left(1+ \sqrt{1 - \nu^2 \xi^2}\right) r^2 \, + \, 2\right]\right] \, \frac{ {\Omega\left(w, v\right)}}{2 r} \,\, \partial_r \left(\vartheta\right) \\ \nonumber & & - \left[2\, e^{a} \,\xi\, \sqrt{1 - \nu^2 \xi^2} \,\,  {\Omega\left(w, v\right)}\right] \, \partial_\xi\left(\vartheta\right) \\ \nonumber & & - \left[\frac{i \, e^{a}}{\xi} \left(-1+ \sqrt{1 - \nu^2 \xi^2}\right) {\left(w | v\right)}\right] \left(\partial_r^2 \left(\vartheta\right) \,+\, \frac{2 n - 3}{r} \,\, \partial_r \left(\vartheta\right)\right) \\ \nonumber & & + \left[\frac{2\,i \, e^{a}}{\xi} \left(-1+ \sqrt{1 - \nu^2 \xi^2}\right)\right] \,  {\left(w | v\right)} \, \partial_r^2 \left(\vartheta\right) \\ \nonumber & & - \left[\frac{2\,i \, e^{a}}{\xi} \left(-1+ \sqrt{1 - \nu^2 \xi^2}\right)\right] \, \frac{ {\left(w | v\right)}}{r} \,\, \partial_r \left(\vartheta\right) \\ \nonumber & & - \left[\frac{2\,i \, e^{a}}{\xi} \left(-1+ \sqrt{1 - \nu^2 \xi^2}\right)\right] \, \frac{ {\left(w | v\right)}}{r} \,\, \partial_a\left(\partial_r \left(\vartheta\right)\right) \\ \nonumber & & - \left[4\,i\,e^a\,\sqrt{1 - \nu^2 \xi^2}\right] \, \frac{ {\left(w | v\right)}}{r} \,\, \partial_\xi\left(\partial_r \left(\vartheta\right)\right) \\ \nonumber & & - \left[\frac{e^a}{\xi^2} \, \left(-1+ \sqrt{1 - \nu^2 \xi^2}\right)\right] \, \frac{ {\Omega\left(w, v\right)}}{2 r} \left[\frac{2 n - 3}{r} \,\, \partial_r^2 \left(\vartheta\right) \,-\, \frac{2 n - 3}{r^2} \,\, \partial_r \left(\vartheta\right)\right] \\ \nonumber & & - \left[\frac{e^a}{\xi^2} \, \left(-1+ \sqrt{1 - \nu^2 \xi^2}\right)\right] \, \frac{ {\Omega\left(w, v\right)}}{2 r} \,\,\partial^3_r \left(\vartheta\right).
\end{eqnarray}
where $\vartheta = \mathcal{F}\left(v_T\right)\left(a, r, \xi\right)$. The resolution of this equation is widely discussed in reference \cite[Ch.\,2, \S\,2.6.5]{Ko14} where various changes of variables and integral transforms are used. 


\end{document}